\newtheorem{theorem}{Theorem}[section]
\newtheorem{lemma}[theorem]{Lemma}
\newtheorem{corollary}[theorem]{Corollary}
\newtheorem{conjecture}[theorem]{Conjecture}
\newenvironment{proof}[1][Proof]{\begin{trivlist}
\item[\hskip \labelsep {\bfseries #1}]}{\end{trivlist}}
\newenvironment{definition}[1][Definition]{\begin{trivlist}
\item[\hskip \labelsep {\bfseries #1}]}{\end{trivlist}}
\newenvironment{remark}[1][Remark]{\begin{trivlist}
\item[\hskip \labelsep {\bfseries #1}]}{\end{trivlist}}
\newcommand{\qed}{\nobreak \ifvmode \relax \else
      \ifdim\lastskip<1.5em \hskip-\lastskip
      \hskip1.5em plus0em minus0.5em \fi \nobreak
      \vrule height0.75em width0.5em depth0.25em\fi}
\renewcommand{\Re}{\operatorname{Re}}
\begin{document}
\title{On Barnes Beta Distributions and Applications to the Maximum Distribution of the 2D Gaussian Free Field}

\author{Dmitry Ostrovsky}

\date{July 21, 2016}

\maketitle
\noindent

\begin{abstract}
\noindent
A new family of Barnes beta distributions on $(0, \infty)$ is introduced
and its infinite divisibility, moment determinacy, scaling, and factorization properties are established. 
The Morris integral probability distribution is constructed from Barnes beta distributions 
of types $(1,0)$ and $(2,2),$ and its moment determinacy and involution invariance properties
are established. For application, the maximum distributions of the 2D gaussian free field
on the unit interval and circle with a non-random logarithmic potential are conjecturally related to the critical Selberg and Morris integral probability 
distributions, respectively, and expressed in terms of sums of Barnes beta distributions of types $(1,0)$ and $(2,2).$
\end{abstract}
{\bf Keywords:} Infinite divisibility, multiplicative chaos measure, Morris integral, multiple gamma function, Selberg integral, self-duality.

\section{Introduction}
\noindent This paper advances the theory of Barnes beta probability distributions that we introduced in \cite{Me13} and \cite{Me14}. 
Barnes beta distributions lie at the crossroads
of several areas of probability theory and statistical physics: Dufresne distributions \cite{Duf10}, infinite divisibility in the context of special 
functions of analytic number theory \cite{BiaPitYor}, \cite{Jacod}, \cite{NikYor}, L\'evy processes \cite{Kuz} -- \cite{LetSim},
and, conjecturally, gaussian multiplicative chaos \cite{MeIMRN}, 
the distribution of the maximum of the characteristic polynomial of GUE random matrices \cite{FyodSimm}, 
and mod-gaussian limit theorems for mesoscopic statistics \cite{Menon}.

The defining property of Barnes beta distributions $\beta_{M, N}(a, b),$ $M\leq N,$ $a=(a_1,\cdots, a_M),$ 
$b=(b_0, b_1,\cdots, b_N),$ 
is that their Mellin transform is 
given in the form of an intertwining product of ratios of multiple gamma functions of Barnes \cite{mBarnes}. For example,
in the special case of type (2,2), $a=(a_1, a_2),$ $b=(b_0, b_1, b_2),$ the Mellin transform is
\begin{align}
{\bf E}\bigl[\beta_{2,2}(a, b)^q\bigr] = &
\frac{\Gamma_2(q+b_0\,|\,a)}{\Gamma_2(b_0\,|\,a)}
\frac{\Gamma_2(b_0+b_1\,|\,a)}{\Gamma_2(q+b_0+b_1\,|\,a)}
\frac{\Gamma_2(b_0+b_2\,|\,a)}{\Gamma_2(q+b_0+b_2\,|\,a)}\times \nonumber \\ & \times  
\frac{\Gamma_2(q+b_0+b_1+b_2\,|\,a)}{\Gamma_2(b_0+b_1+b_2\,|\,a)}. \label{basic}
\end{align}
%%One observes that $q$ comes only with the '+' sign in this formula and it is known that $\beta_{2 , 2}$ takes value in $(0, 1).$ 
%%The first property is true of the Mellin transform of general $\beta_{M, N},$ whereas 
It is known that $\beta_{M, N}(a, b)$ takes values in (0, 1] if $M<N$ and in $(0,\,1)$ if $M=N.$

The contribution of this paper is to review the general theory of Barnes
beta distributions for $M\leq N,$ extend it to the case of $M=N+1,$ and give novel applications of the new and existing theory. 
Our theoretical contribution is %%the extension of the original construction
%%to a new family of distributions that are supported on $(0, \infty).$ 
the proof that $\beta_{M, M-1}(a, b)$ exists for $M\in\mathbb{N},$ takes values in $(0, \infty),$
has infinitely divisible logarithm, and satisfies remarkable infinite factorizations.
In particular, the new construction specializes to certain intertwining products of multiple sine functions.
This is motivated by the work of \cite{Kuz} -- \cite{LetSim}, who computed the Mellin transform of certain 
functionals of the stable L\'evy process in the form of products of ratios of double gamma functions
%%similar to \eqref{basic} except having '+' and '-' signs in the numerator of denominator. 
that are different from \eqref{basic}. %%We show that 
Their distributions coincide with what we call $\beta_{2, 1}(a, b),$ 
$a=(a_1, a_2),$ $b=(b_0, b_1),$
\begin{equation}\label{basicL}
{\bf E}\bigl[\beta_{2,1}(a, b)^q\bigr] =
\frac{\Gamma_2(q+b_0\,|\,a)}{\Gamma_2(b_0\,|\,a)}
\frac{\Gamma_2(b_0+b_1\,|\,a)}{\Gamma_2(q+b_0+b_1\,|\,a)}.
\end{equation}
%%and then extend this construction to arbitrary multiple gamma functions. 

The contribution of this paper to statistical physics is to formulate precise conjectures about the distribution of the maximum of 
the 2D gaussian free field (GFF) on the unit interval and circle with a non-random logarithmic potential in terms of Barnes beta distributions. %%of type (2,2). 
The GFF is a fascinating mathematical object that is of fundamental interest in statistical mechanics of disordered energy landscapes \cite{CRS}, \cite{FyoBou}, \cite{YK},
\cite{FLD}, \cite{FLDR}, \cite{FLDR2}, quantum disordered systems exhibiting multifractality \cite{CLD}, \cite{CMW}, \cite{YO}, and quantum gravity \cite{DS}, \cite{RV}. 
It also appears naturally in 
the limit of a wide class of statistics. For example, \cite{Hughesetal} proved convergence of
the distribution of the log-characteristic polynomial of large unitary matrices to the GFF on the circle
and \cite{FKS} proved convergence of an appropriately rescaled log-characteristic polynomial of large Hermitian matrices
to the GFF on the interval. We recently showed in \cite{Menon} that the smoothed indicator function of the mesoscopic statistics of Riemann zeroes
of Bourgade-Kuan \cite{BK} and Rodgers \cite {Rodg} converges to the GFF on the interval. We also indicated there that the same result is true of
any linear statistic that converges to a gaussian process having $\mathcal{H}^{1/2}(\mathbb{R})$ limiting covariance
such as the local CUE statistic of Soshnikov \cite{Sosh}, for example. %%more generally, the large class of linear mesoscopic
%%statistics of random matrices ...(Gaulitier). 

The problem of calculating the maximum distribution of the GFF 
was first considered in \cite{FyoBou} for the GFF on the circle and in \cite{FLDR} on the interval, and then extended %% a series of publications
in \cite{YO}, \cite{FLDR2}, and most recently in  \cite{CRS} and \cite{FLD}. In all cases they conjectured 
the Laplace transform of the maximum distribution including the leading asymptotic (non-random) drift and the fluctuating part of
the distribution. While there has recently been made good progress in verifying their results for the drift, see \cite{Ding},
their conjectures for the fluctuating part are still beyond the reach of existing methods, as the law of the so-called derivative martingale is unknown, 
see \cite{Madmax} and \cite{SubZei} for details. The difficulty of the problem is
that their solutions are based on the still unproven freezing scenario for the gaussian multiplicative chaos measure, see \cite{Mad}
for recent progress, and on the conjectured form of the Mellin transform of the total (random) mass of that measure,
see \cite{Webb} for partial results in the circle case. Our contribution is to show that the result of \cite{FLD} and \cite{FLDR} for the fluctuating part 
on the interval does correspond
to a valid probability distribution, namely, the critical Selberg integral probability distribution, which we developed in 
\cite{MeIMRN} and \cite{Me14}, and can be naturally expressed in terms of Barnes beta 
distributions of types $(1,0)$ and $(2,2),$ thereby conjecturing the law of the derivative martingale. 
%%see \cite{FyoBou}, \cite{FLDR}, \cite{Me4} \cite{MeIMRN}. 
%%Our contribution is to show that their result for the interval can be naturally expressed in terms of Barnes beta
%%distributions of types $(1,0)$ and $(2,2)$ and, in particular, the Selberg integral probability distribution that we developed in 
%%\cite{MeIMRN} and \cite{Me14}. 
We also clarify the origin of the self-duality of the Mellin transform that plays a key role   
in their freezing framework and give a heuristic derivation of the conjecture by combining
their calculations for the fluctuating part with our conjecture about the total mass of the
multiplicative chaos measure on the interval. 
%%provide a self-contained review of their calculations for the fluctuating part with an 
%%emphasis on the connection with the theory of gaussian multiplicative chaos. 
In the case of the circle, we construct a new probability distribution having the property that its $n$th moment equals 
the value of the Morris integral of dimension $n,$ express it in terms of Barnes beta distributions of types $(1,0)$ and $(2,2),$ 
prove self-duality of its Mellin transform, and further extend the original work of \cite{FyoBou} by formulating 
a novel conjecture about  (the fluctuating part of) the distribution of the maximum of the GFF on the circle
with a non-random logarithmic potential
in terms of the critical Morris integral distribution, thereby also conjecturing the law of the 
derivative martingale in this case.
%%conjecturally relate it to This advances

The plan of the paper is as follows. In Section 2 we give a review of the general theory of Barnes beta distributions of type $(M, N)$
and of the Selberg integral distribution as a special case of the theory corresponding to type (2,2). In Section 3 we construct a new family 
of Barnes beta distributions on $(0, \infty).$ In Section 4 we construct the Morris integral distribution and study its properties.
%%In Section 5 we briefly review the gaussian free field on the interval and circle, formulate our conjectures about its maximum distribution,
%%heuristic derivation of the conjectures. 
In Section 5 we treat the maximum distribution of the gaussian free field. In Section 6 we give the proofs. Section 7 concludes.

\section{A Review of Multiple Gamma Functions and Barnes Beta and Selberg Integral Distributions} %%, and Selberg Integral Distribution}
\noindent
The multiple gamma function of Barnes \cite{mBarnes} is defined classically by
\begin{equation}\label{barnes}
\Gamma^{-1}_M(w\,|\,a) = e^{P(w\,|\,a)}\,w\prod\limits_{n_1,\cdots , n_M=0}^\infty
{}' \Bigl(1+\frac{w}{\Omega}\Bigr)\exp\Bigl(\sum_{k=1}^M \frac{(-1)^k}{k}\frac{w^k}{\Omega^k}\Bigr), 
\end{equation}
where $\Re(w)>0,$ $P(w\,|\,a)$ is a polynomial in $w$ of degree $M$ that depends on one's choice of
normalization, the parameters $a_j>0,$ $j=1\cdots M,$ 
\begin{equation}\label{Omega}
\Omega\triangleq \sum_{i=1}^M n_i \, a_i,
\end{equation}
and the prime indicates that the product is over all indices except $n_1=\cdots =n_M=0.$
Our choice of normalization is that of Ruijsenaars \cite{Ruij}, which is explained next. %%, which is equivalent to defining the logarithm the multiple gamma function
Let
\begin{equation}\label{fdef}
f_M(t|a) = t^M \prod\limits_{j=1}^M (1-e^{-a_j t})^{-1},
\end{equation}
for some integer $M\geq 0.$
%%\footnote{Many of our results can be formulated for a general
%%function of the Ruijsenaars class, \emph{i.e.} analytic for
%%$\Re(t)>0$ and at $t=0$ and of at worst polynomial growth as
%%$t\rightarrow \infty,$ see \cite{Ruij}, Section 2. }
Slightly
modifying the definition in \cite{Ruij}, we define multiple
Bernoulli polynomials by
\begin{equation}\label{Bdefa}
B_{M, m}(x|a) \triangleq \frac{d^m}{dt^m}\Big|_{t=0} \bigl[f_M(t|a)
e^{-xt}\bigr].
%%\sum\limits_{k=0}^m \binom{m}{k}\,f^{(k)}(0)
%%\,(-x)^{m-k}, \,\, m\in\mathbb{N}. %% , \;\text{or equivalently},\label{Bdefa} \\
%%f(t)\,e^{-xt} & = \sum\limits_{n=0}^\infty B^{(f)}_n(x)
%%\frac{t^n}{n!}, \label{Bdefb}
\end{equation}
%%for all sufficiently small $t.$
%%and in the case of \eqref{fdef} denote them by \(B_{M,m}(x\,|\,a).\)
The key result of \cite{Ruij} about the log-multiple Gamma function that
we need is summarized in the following theorem. 
\begin{theorem}[Ruijsenaars]\label{R}
$\log\Gamma_M(w|a)$ satisfies the Malmst\'en-type formula for $\Re(w)>0,$
\begin{equation}\label{keyRuij}
\log\Gamma_M(w|a) = \int\limits_0^\infty \frac{dt}{t^{M+1}} \Bigl(
e^{-wt}\,f_M(t|a) - \sum\limits_{k=0}^{M-1} \frac{t^k}{k!}\,B_{M,k}(w|a)
- \frac{t^M\,e^{-t}}{M!}\, B_{M, M}(w|a)\Bigr).
\end{equation}
$\log\Gamma_M(w\,|\,a)$ satisfies the asymptotic expansion,
\begin{align}
\log\Gamma_M(w\,|\,a) = & -\frac{1}{M!} B_{M, M}(w\,|\,a)\,\log(w) + \sum\limits_{k=0}^M
\frac{B_{M,k}(0\,|\,a) (-w)^{M-k}}{k!(M-k)!}\sum\limits_{l=1}^{M-k}
\frac{1}{l} + \nonumber \\ & +  R_M(w\,|\,a), \label{asym}\\
R_M(w\,|\,a) = &  O(w^{-1}), \,|w|\rightarrow\infty, \, |\arg(w)|<\pi.
\label{asymremainder}
\end{align}
\end{theorem}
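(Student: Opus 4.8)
The plan is to derive both assertions from the representation of the Ruijsenaars-normalized multiple gamma function through the Barnes multiple zeta function
\[
\zeta_M(s,w\,|\,a)\triangleq\sum_{n_1,\dots,n_M\geq 0}(w+\Omega)^{-s},\qquad\Re(s)>M,\ \Re(w)>0,
\]
with $\Omega$ as in \eqref{Omega}. What I would take from \cite{Ruij} is that the Ruijsenaars choice of the polynomial $P(w\,|\,a)$ in \eqref{barnes} is precisely the one for which the Lerch-type identity $\log\Gamma_M(w\,|\,a)=\partial_s\zeta_M(s,w\,|\,a)\big|_{s=0}$ holds, together with the analytic continuation of $\zeta_M$ in $s$ past $\Re(s)=M$. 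Substituting $(w+\Omega)^{-s}=\Gamma(s)^{-1}\int_0^\infty t^{s-1}e^{-(w+\Omega)t}\,dt$ and summing the geometric series over each $n_i$ collapses the sum to a single Mellin-type integral,
\[
\Gamma(s)\,\zeta_M(s,w\,|\,a)=\int_0^\infty t^{s-M-1}\,e^{-wt}f_M(t\,|\,a)\,dt,\qquad\Re(s)>M,
\]
with $f_M$ as in \eqref{fdef}; everything then reduces to analysing this one integral near $s=0$ and for large $|w|$.

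For the Malmst\'en-type formula \eqref{keyRuij} I would continue the integral to $\Re(s)>-1$ by subtracting, on $(0,1)$, the Taylor polynomial of $e^{-wt}f_M(t\,|\,a)=\sum_{k\geq0}\frac{B_{M,k}(w\,|\,a)}{k!}t^k$ up to order $M$, as given by \eqref{Bdefa}, obtaining three pieces: two integrals holomorphic for $\Re(s)>-1$, and the rational sum $\sum_{k=0}^{M}\frac{B_{M,k}(w\,|\,a)}{k!\,(s-M+k)}$ whose only pole at $s=0$ has residue $B_{M,M}(w\,|\,a)/M!$. Since $1/\Gamma(s)=s+\gamma s^2+O(s^3)$, the value $\partial_s\zeta_M(0,w\,|\,a)$ is the coefficient of $s$ in the resulting Laurent expansion, which is the sum of the two finite integrals at $s=0$, the terms $-B_{M,k}(w\,|\,a)/(k!(M-k))$ for $0\leq k<M$, and $\gamma B_{M,M}(w\,|\,a)/M!$. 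Recombining these onto $(0,\infty)$ and invoking the classical representation $\gamma=\int_0^1\frac{1-e^{-t}}{t}\,dt-\int_1^\infty\frac{e^{-t}}{t}\,dt$ is exactly what converts the $\gamma$-term into the correction $-\frac{t^M e^{-t}}{M!}B_{M,M}(w\,|\,a)$ in \eqref{keyRuij} and fuses all three pieces into one integral; along the way one checks convergence at $t=0$, where after the subtractions the integrand is $O(t^{M+1})$ because $t^M(1-e^{-t})=O(t^{M+1})$, and at $t=\infty$, where it is $O(t^{-2})$.

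For the asymptotic expansion \eqref{asym}--\eqref{asymremainder} I would analyse the same integral for $|w|\to\infty$ with $|\arg w|<\pi$. Cutting at $t=1$ makes the tail over $(1,\infty)$ exponentially small (the usual contour rotation covering $\tfrac\pi2\leq|\arg w|<\pi$), while over $(0,1)$ I would substitute the expansion $f_M(t\,|\,a)=\sum_{m\geq0}\frac{B_{M,m}(0\,|\,a)}{m!}t^m$ and use Watson's lemma, $\int_0^1 t^{s-M-1+m}e^{-wt}\,dt=\Gamma(s-M+m)\,w^{M-m-s}+O(e^{-c|w|})$, to get $\Gamma(s)\zeta_M(s,w\,|\,a)\sim\sum_{m\geq0}\frac{B_{M,m}(0\,|\,a)}{m!}\Gamma(s-M+m)\,w^{M-m-s}$. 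Differentiating term by term at $s=0$: the terms with $m>M$ pick up a factor $s$ from $1/\Gamma(s)$ and contribute only the $O(w^{-1})$ remainder $R_M(w\,|\,a)$; the resonant term $m=M$ collapses to $w^{-s}$, with $s$-derivative $-\log w$ at $0$; and for $0\leq m<M$ the simple pole of $\Gamma(s-M+m)$ meets the simple zero of $1/\Gamma(s)$, leaving $\frac{(-1)^{M-m}B_{M,m}(0\,|\,a)}{m!(M-m)!}w^{M-m}\bigl(H_{M-m}-\log w\bigr)$ once one uses $\gamma+\psi(n+1)=H_n=\sum_{l=1}^n\frac1l$. Summing over $m$ and repackaging the coefficient of $\log w$ via the Leibniz identity $B_{M,M}(w\,|\,a)=\sum_{j=0}^{M}\binom{M}{j}B_{M,j}(0\,|\,a)(-w)^{M-j}$ into $-\frac1{M!}B_{M,M}(w\,|\,a)$ reproduces \eqref{asym} with remainder \eqref{asymremainder}.

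The hard part is not any single estimate but the bookkeeping: pinning down Ruijsenaars' normalization so the Lerch identity carries no stray additive constant, subtracting precisely the Taylor coefficients $k=0,\dots,M$ needed to reach $s=0$, correctly isolating the $m=M$ resonance that produces the logarithm, and then verifying that the scattered occurrences of $\gamma$, of $\psi(n+1)$, and of binomial coefficients reassemble into the stated $B_{M,M}$ and harmonic-sum terms. The remaining analytic points -- interchanging summation and integration, differentiating under the integral in $s$, and the sectorial uniformity of the Watson-lemma remainder for $|\arg w|<\pi$ -- are routine but need to be stated.
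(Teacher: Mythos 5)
The paper does not prove this theorem: it is imported verbatim from Ruijsenaars \cite{Ruij} (``The key result of \cite{Ruij} \dots is summarized in the following theorem''), so there is no internal proof to compare against. Your sketch is a correct reconstruction of the standard argument, and it is essentially the route the cited source takes: define $\log\Gamma_M$ as $\partial_s\zeta_M(s,w\,|\,a)\big|_{s=0}$, pass to the Mellin representation $\Gamma(s)\zeta_M(s,w\,|\,a)=\int_0^\infty t^{s-M-1}e^{-wt}f_M(t\,|\,a)\,dt$, continue in $s$ by Taylor subtraction, and read off both \eqref{keyRuij} and \eqref{asym}. I checked the bookkeeping you worried about: the constant term $c_0$ plus $\gamma$ times the residue $B_{M,M}(w\,|\,a)/M!$ does recombine, via $\gamma=\int_0^1\frac{1-e^{-t}}{t}dt-\int_1^\infty\frac{e^{-t}}{t}dt$ and $\int_1^\infty t^{k-M-1}dt=\frac{1}{M-k}$, into exactly the single integral \eqref{keyRuij}; and in the asymptotics the identity $\sum_{m}\binom{M}{m}B_{M,m}(0\,|\,a)(-w)^{M-m}=B_{M,M}(w\,|\,a)$ (Leibniz applied to \eqref{Bdefa}) together with $\gamma+\psi(n+1)=\sum_{l=1}^n\frac1l$ reproduces \eqref{asym}--\eqref{asymremainder} precisely. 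The one place where your sketch is loose is the asymptotic step: you apply Watson's lemma termwise to $\sum_{m\ge0}\frac{B_{M,m}(0\,|\,a)}{m!}\int_0^1 t^{s-M-1+m}e^{-wt}dt$, but for $m\le M$ and $s$ near $0$ those individual integrals diverge at $t=0$; the clean fix is to run the large-$|w|$ analysis directly on the already-subtracted representation \eqref{keyRuij} (or to work at $\Re(s)>M$ and continue afterwards), which changes nothing in the final coefficients. With that caveat stated, the proposal is sound.
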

The formula in \eqref{keyRuij} can be thought of as defining the Barnes multiple gamma function by
%%Now, following \cite{Ruij}, define 
\begin{equation}\label{mgamma}
\Gamma_M(w\,|\,a) \triangleq \exp\bigl(\log\Gamma_M(w\,|\,a)\bigr),
\end{equation}
in which case one proves that the formula in \eqref{barnes} holds for a particular choice of $P(w\,|\,a).$
One can show that
$\Gamma_M(w\,|\,a)$ satisfies the fundamental functional equation
\begin{equation}\label{feq}
\Gamma_{M}(w\,|\,a) =
\Gamma_{M-1}(w\,|\,\hat{a}_i)\,\Gamma_M\bigl(w+a_i\,|\,a\bigr),\,i=1\cdots
M, \,\,M\in\mathbb{N},
\end{equation}
$\hat{a}_i = (a_1,\cdots, a_{i-1},\,a_{i+1},\cdots, a_{M}).$ 
For example, 
\begin{align}
\Gamma_1(w\,|\,a) & = \frac{a^{w/a-1/2}}{\sqrt{2\pi}} \,\Gamma(w/a), \label{gamma1}\\
\Gamma_0(w) &= 1/w. %%which is also due to \cite{mBarnes}.
\end{align}

The multiple gamma function has three additional properties that are important for our purposes.
%%\begin{theorem}[Scaling invariance]\label{scaling}
Let \(\Re(w)>0,\) \(\kappa>0\) and \((\kappa\,a)_i\triangleq\kappa\,a_i,\;i=1\cdots M.\)
Then, it has the scaling property,
\begin{equation}\label{scale}
\Gamma_M(\kappa w\,|\,\kappa a) = \kappa^{-B_{M,M}(w\,|\,a)/M!}\,\Gamma_M(w\,|\,a).
\end{equation}
%%\end{theorem}
Let $\Re(w)>0$ and $k=1,2,3,\cdots.$ It has the multiplication property,
\begin{equation}\label{multiply}
\Gamma_M(kw\,|\,a) = k^{-B_{M,
M}(kw\,|\,a)/M!}\,\prod\limits_{p_1,\cdots,p_M=0}^{k-1}\Gamma_M\Bigl(w+\frac{\sum_{j=1}^M
p_j a_j}{k}\,\Big|\,a\Bigr).
\end{equation}
Given $x>0$ and \(a=(a_1\cdots a_{M-1}),\)
there exist functions $\phi_{M}\bigl(w,x\,|\,a, a_{M}\bigr)$ and \\ \(\Psi_{M}(w,y\,|\,a)\) such that
\begin{align}
\Gamma_{M}\bigl(w\,|\,a,a_{M}\bigr) = &  e^{\phi_{M}(w,x\,|\,a, a_{M})}\, \Gamma_{M-1}(w\,|\,a)\prod\limits_{k=1}^\infty \frac{\Gamma_{M-1}(w+ka_{M}\,|\,a)}{\Gamma_{M-1}(x+ka_{M}\,|\,a)}\times \nonumber \\ & \times \exp\Bigl(\Psi_{M}(x,ka_{M}\,|\,a)-\Psi_{M}(w,ka_{M}\,|\,a)\Bigr). \label{generalfactorization}
%%\star \nonumber \\
%%&\star\exp{\bigl(\phi_{M}(w,x\,|\,a, a_{M})\bigr)}\, \Gamma_{M-1}(w\,|\,a). 
\end{align}
\(\Psi_{M}(w,y\,|\,a)\) and \(\phi_{M}(w,x\,|\,a, a_{M})\) are polynomials in $w$ of degree $M.$ This is known as Shintani
factorization, see \cite{Shintani} for the original result for $M=2.$ The interested reader can find explicit formulas for
\(\Psi_{M}(w,y\,|\,a)\) and \(\phi_{M}(w,x\,|\,a, a_{M})\) and derivations of all the three properties in \cite{Me14}.

We now proceed to review the Barnes beta construction following \cite{Me13} and \cite{Me14}. 
Define the action of the combinatorial operator $\mathcal{S}_N$ on a function \(h(x)\) by
\begin{definition}\label{Soperator}
\begin{equation}\label{S}
(\mathcal{S}_Nh)(q\,|\,b) \triangleq \sum\limits_{p=0}^N (-1)^p
\sum\limits_{k_1<\cdots<k_p=1}^N
h\bigl(q+b_0+b_{k_1}+\cdots+b_{k_p}\bigr).
\end{equation}
\end{definition}
\noindent In other words, in \eqref{S} the action of $\mathcal{S}_N$
is defined as an alternating sum over all combinations of $p$
elements for every $p=0\cdots N.$ 
\begin{definition}\label{bdef}
Given $q\in\mathbb{C}-(-\infty, -b_0],$ \(a=(a_1\cdots a_{M})\), \(b=(b_0,b_1\cdots b_{N}),\) let\footnote{
We will abbreviate $\bigl(\mathcal{S}_N \log\Gamma_M\bigr)(q\,|a,\,b)$ to mean the action of $\mathcal{S}_N$ on
$\log\Gamma_M(x|a),$ \emph{i.e.} $\bigl(\mathcal{S}_N \log\Gamma_M(x|a)\bigr)(q\,|\,b).$}
\begin{equation}\label{eta}
\eta_{M,N}(q\,|a,\,b) \triangleq \exp\Bigl(\bigl(\mathcal{S}_N
\log\Gamma_M\bigr)(q\,|a,\,b) - \bigl(\mathcal{S}_N \log\Gamma_M\bigr)(0\,|a,\,b)\Bigr).
\end{equation}
\end{definition}
The function $\eta_{M,N}(q\,|a,\,b)$ is holomorphic over
$q\in\mathbb{C}-(-\infty, -b_0]$ and equals a product of ratios of
multiple gamma functions by construction. Specifically,
\begin{align}
\eta_{M,N}(q\,|a, b) 
= & \frac{\Gamma_M(q+b_0|a)}{\Gamma_M(b_0|a)}\prod\limits_{j_1=1}^N \frac{\Gamma_M(b_0+b_{j_1}|a)}{\Gamma_M(q+b_0+b_{j_1}|a)}
\prod\limits_{j_1<j_2}^N \frac{\Gamma_M(q+b_0+b_{j_1}+b_{j_2}|a)}{\Gamma_M(b_0+b_{j_1}+b_{j_2}|a)} \nonumber \\
&\times\prod\limits_{j_1<j_2<j_3}^N \frac{\Gamma_M(b_0+b_{j_1}+b_{j_2}+b_{j_3}|a)}{\Gamma_M(q+b_0+b_{j_1}+b_{j_2}+b_{j_3}|a)} \cdots,
\end{align}
until all the $N$ indices are exhausted. The function $\log\eta_{M,N}(q\,|a,\,b)$ has an important integral representation that follows from 
that of $\log\Gamma_M(w|a)$ in \eqref{keyRuij}.
\begin{theorem}[Existence and Structure]\label{main}
Given $M, N\in\mathbb{N}$ such that $M\leq N,$ the function
$\eta_{M,N}(q\,|a,\,b)$ is the Mellin transform of a probability
distribution on $(0, 1].$ Denote it by $\beta_{M, N}(a,b).$ Then,
\begin{equation}
{\bf E}\bigl[\beta_{M, N}(a,b)^q\bigr] = \eta_{M, N}(q\,|a,\,b),\;
\Re(q)>-b_0.
\end{equation}
The distribution $-\log\beta_{M, N}(a,b)$ is infinitely divisible on
$[0, \infty)$ and has the L\'evy-Khinchine decomposition for $\Re(q)>-b_0,$
\begin{equation}\label{LKH}
{\bf E}\Bigl[\exp\bigl(q\log\beta_{M, N}(a,b)\bigr)\Bigr] =
\exp\Bigl(\int\limits_0^\infty (e^{-tq}-1) e^{-b_0
t} \frac{
\prod\limits_{j=1}^N (1-e^{-b_j t})}{\prod\limits_{i=1}^M (1-e^{-a_i t})}
\frac{dt}{t} \Bigr).
\end{equation}
%%\begin{corollary}[Structure]\label{Structure}
$\log\beta_{M,N}(a,b)$ is absolutely continuous if and only if $M=N.$ If $M<N,$\\
$-\log\beta_{M,N}(a,b)$ is compound Poisson and
\begin{subequations}
\begin{align}
{\bf P}\bigl[\beta_{M,N}(a,b)=1\bigr] & =
\exp\Bigl(-\int\limits_0^\infty e^{-b_0 t}\frac{
\prod\limits_{j=1}^N (1-e^{-b_j t})}{\prod\limits_{i=1}^M (1-e^{-a_i t})}
\frac{dt}{t}\Bigr), \label{Pof1} \\
& = \exp\bigl(-(\mathcal{S}_N \log\Gamma_M)(0\,|a,\,b)\bigr). \label{Pof12}
\end{align}
\end{subequations}
\end{theorem}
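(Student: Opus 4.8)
The plan is to derive a Malmst\'en-type integral for $\log\eta_{M,N}$ by pushing the operator $\mathcal{S}_N$ through Ruijsenaars' formula \eqref{keyRuij}, and then to read off all of the probabilistic content from L\'evy--Khinchine theory for subordinators.

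First, apply $\mathcal{S}_N$ in the variable $w$ to the identity \eqref{keyRuij} for $\log\Gamma_M(w\,|\,a)$, i.e.\ form the alternating sum of \eqref{keyRuij} evaluated at the shifted arguments $q+b_0+b_{k_1}+\cdots$ and subtract its value at $q=0$ as in \eqref{eta}. Only finitely many shifts occur and each integral in \eqref{keyRuij} converges absolutely for $\Re(w)>0$, so the sum may be moved inside the integral. Now $f_M(t\,|\,a)$ does not depend on $w$, while a direct computation gives the basic identity $(\mathcal{S}_N e^{-(\cdot)t})(q\,|\,b)=e^{-(q+b_0)t}\prod_{j=1}^N(1-e^{-b_j t})$. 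Writing $\mathcal{S}_N$ as the composition $\prod_{j=1}^N(I-E_{b_j})$ of backward shift operators (up to the overall shift by $q+b_0$) shows that $\mathcal{S}_N$ annihilates every polynomial in $w$ of degree $<N$ and sends a polynomial of degree $N$ to a $q$-independent constant. Since $B_{M,k}(w\,|\,a)$ is a polynomial in $w$ of degree $k$ (it is the $m=k$ Taylor coefficient at $t=0$ of the analytic function $f_M(t\,|\,a)e^{-wt}$), each correction $B_{M,k}$, $k=0,\dots,M-1$, is killed because $k\le M-1<N$, and the term built from $B_{M,M}(w\,|\,a)$ either vanishes when $M<N$ or is a $q$-free constant when $M=N$ which cancels in the difference \eqref{eta}. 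Using $f_M(t\,|\,a)/t^{M+1}=\bigl(t\prod_{i}(1-e^{-a_i t})\bigr)^{-1}$, this yields, for $\Re(q)>-b_0$,
\[ \log\eta_{M,N}(q\,|\,a,b)=\int_0^\infty(e^{-qt}-1)\,e^{-b_0 t}\,\frac{\prod_{j=1}^N(1-e^{-b_j t})}{\prod_{i=1}^M(1-e^{-a_i t})}\,\frac{dt}{t}, \]
the integrand being $O(t^{N-M})$ as $t\to 0^+$ and exponentially small as $t\to\infty$ on this half-plane, which secures convergence; running the same computation at $q=0$ also identifies $(\mathcal{S}_N\log\Gamma_M)(0\,|\,a,b)$ with the exponent appearing in \eqref{Pof1}.

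Next, I would read the displayed formula as a L\'evy--Khinchine (Laplace) exponent. The measure $\nu(dt)=e^{-b_0 t}\,\prod_j(1-e^{-b_j t})/\prod_i(1-e^{-a_i t})\,dt/t$ on $(0,\infty)$ is nonnegative and satisfies $\int_0^\infty\min(1,t)\,\nu(dt)<\infty$, since $\nu(dt)=O(t^{N-M-1})\,dt$ near $0$ (with $N\ge M$) and $\nu$ is exponentially small near $\infty$. Hence $q\mapsto\int_0^\infty(1-e^{-qt})\,\nu(dt)$ is the Laplace exponent of a driftless subordinator $L$; set $\beta_{M,N}(a,b):=e^{-L}$, which takes values in $(0,1]$. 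Then ${\bf E}[\beta_{M,N}(a,b)^q]={\bf E}[e^{-qL}]=\eta_{M,N}(q\,|\,a,b)$ for $\Re(q)\ge 0$, and both sides are finite and analytic on $\Re(q)>-b_0$ because $\int_1^\infty e^{rt}\,\nu(dt)<\infty$ exactly for $r<b_0$; by uniqueness of the Mellin transform this proves the existence assertion and \eqref{LKH}, with $-\log\beta_{M,N}(a,b)=L$ infinitely divisible on $[0,\infty)$.

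Finally, the dichotomy between an atom at $1$ and absolute continuity is controlled by the total mass $\nu\bigl((0,\infty)\bigr)=\int_0^\infty e^{-b_0 t}\,\prod_j(1-e^{-b_j t})/\prod_i(1-e^{-a_i t})\,dt/t$, whose integrand is of order $t^{N-M-1}$ near $0$. If $M<N$ this mass is finite, so $L$ is compound Poisson and ${\bf P}[\beta_{M,N}(a,b)=1]={\bf P}[L=0]=e^{-\nu((0,\infty))}=\exp\bigl(-(\mathcal{S}_N\log\Gamma_M)(0\,|\,a,b)\bigr)$ by the computation in the first step, which is \eqref{Pof1}--\eqref{Pof12}. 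If $M=N$ the mass is infinite, and since $\nu$ is itself absolutely continuous with infinite total mass, the standard fact that an infinitely divisible law whose L\'evy measure is absolutely continuous with infinite total mass is itself absolutely continuous shows that $\log\beta_{M,N}(a,b)$ is absolutely continuous; conversely the compound-Poisson structure gives the atom precisely when $M<N$. The main obstacle is the first step: justifying the passage of $\mathcal{S}_N$ through Ruijsenaars' subtracted integral and verifying that exactly the polynomial corrections are removed, in particular the borderline case $M=N$, where $B_{M,M}(w\,|\,a)$ has degree exactly $N$ in $w$ and one must check that $(\mathcal{S}_N B_{M,M})(q\,|\,b)$ is genuinely independent of $q$ so that it disappears upon forming $\eta_{M,N}$ via \eqref{eta}. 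Once the integral representation is in hand, the remaining steps are routine applications of subordinator theory.
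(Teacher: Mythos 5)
Your proposal is correct and follows essentially the same route the paper takes: push $\mathcal{S}_N$ through Ruijsenaars' formula \eqref{keyRuij}, use the fact that $\mathcal{S}_N$ kills polynomials of degree $<N$ and sends degree-$N$ polynomials to $q$-independent constants (this is exactly \eqref{id1}--\eqref{id2} of Lemma \ref{MyLemma}, credited to \cite{Me13}) to reduce to the L\'evy--Khinchine integral \eqref{LKH}, and then invoke standard subordinator/infinite-divisibility theory (the paper cites Theorem 4.23 and Proposition 8.2 of \cite{SteVHar}) for the compound-Poisson versus absolutely-continuous dichotomy. This mirrors the proof of Theorem \ref{mainsine} given in Section 6, which is the paper's template for the argument.
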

It is worth emphasizing that the integral representation of $\log\eta_{M,N}(q\,|a,\,b)$ in \eqref{LKH} is the main result as it 
automatically implies that $\beta_{M, N}(a,b)$ is a valid probability distribution having
infinitely divisible logarithm, see Chapter 3 of \cite{SteVHar} for background material
on infinitely divisible distributions on $[0, \infty).$ 

The Mellin transform of Barnes beta distributions satisfies a function equation
that is inherited from that of the multiple gamma function and two remarkable
factorizations.
\begin{theorem}[Properties]\label{FunctEquat}
$1\leq M\leq N,$ $q\in\mathbb{C}-(-\infty, -b_0],$ $i=1\cdots M,$
\begin{equation}
\eta_{M, N}(q+a_i\,|\,a,\,b) =
%%\eta_{M, N}(q\,|\,a,\,b) \frac{\eta_{M, N}(a_i\,|\,a,\,b)}{\eta_{M-1, N}(q\,|\,\hat{a}_i,\,b)}, \\
\eta_{M, N}(q\,|\,a,\,b)\,\exp\bigl(-(\mathcal{S}_N
\log\Gamma_{M-1})(q\,|\,\hat{a}_i, b)\bigr). \label{fe1}
\end{equation}
Let $\Omega\triangleq \sum_{i=1}^M n_i \, a_i.$
\begin{align}
\eta_{M,N}(q\,|\,a, b)  = & \prod\limits_{k=0}^\infty \frac{\eta_{M-1,N}(q+k
a_i\,|\,\hat{a}_i, b)}{\eta_{M-1,N}(k
a_i\,|\,\hat{a}_i, b)}, \label{infinprod2} \\
\eta_{M,N}(q\,|\,a, b) = & \prod\limits_{n_1,\cdots ,n_M=0}^\infty \Bigl[
\frac{b_0+\Omega}{q+b_0+\Omega}\prod\limits_{j_1=1}^N \frac{q+b_0+b_{j_1}+\Omega}
{b_0+b_{j_1}+\Omega} \times \nonumber \\
& \times\prod\limits_{j_1<j_2}^N \frac{b_0+b_{j_1}+b_{j_2}+\Omega}
{q+b_0+b_{j_1}+b_{j_2}+\Omega} \times \nonumber \\
& \times
\prod\limits_{j_1<j_2<j_3}^N \frac{q+b_0+b_{j_1}+b_{j_2}+b_{j_3}+\Omega}{b_0+b_{j_1}+b_{j_2}+b_{j_3}+\Omega}\cdots\Bigr].
\label{infbarnesfac}
\end{align}
Probabilistically, these factorizations are equivalent to, respectively, 
\begin{align}
\beta_{M, N}(a,\,b) \overset{{\rm in \,law}}{=}&
\prod\limits_{k=0}^\infty \beta_{M-1, N}(\hat{a}_i,\,b_0+ka_i, \, \,b_1,\cdots, b_N), \\
\beta_{M, N}(a,\,b) \overset{{\rm in \,law}}{=}&
\prod\limits_{n_1,\cdots ,n_M=0}^\infty \beta_{0, N}(b_0+\Omega,\,\,b_1,\cdots, b_N). \label{probbarnesfac}
\end{align}
\end{theorem}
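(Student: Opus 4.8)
The plan is to read off \eqref{fe1} from the fundamental functional equation \eqref{feq} of the multiple gamma function, and then to extract both factorizations \eqref{infinprod2} and \eqref{infbarnesfac} from the integral representation of $\log\eta_{M,N}$ contained in \eqref{LKH}, namely $\log\eta_{M,N}(q\,|\,a,b)=\int_0^\infty(e^{-tq}-1)\,e^{-b_0t}\,\frac{\prod_{j=1}^N(1-e^{-b_jt})}{\prod_{i=1}^M(1-e^{-a_it})}\,\frac{dt}{t}$. For \eqref{fe1}, take logarithms in \eqref{feq} to get $\log\Gamma_M(w+a_i\,|\,a)=\log\Gamma_M(w\,|\,a)-\log\Gamma_{M-1}(w\,|\,\hat{a}_i)$ and apply the linear operator $\mathcal{S}_N$ in the variable $w$; since applying $\mathcal{S}_N$ at $q$ to $w\mapsto h(w+a_i)$ is the same as applying $\mathcal{S}_N$ to $h$ at $q+a_i$, this gives $(\mathcal{S}_N\log\Gamma_M)(q+a_i\,|\,a,b)-(\mathcal{S}_N\log\Gamma_M)(q\,|\,a,b)=-(\mathcal{S}_N\log\Gamma_{M-1})(q\,|\,\hat{a}_i,b)$, and exponentiating (the normalising constant $(\mathcal{S}_N\log\Gamma_M)(0\,|\,a,b)$ cancels in the ratio $\eta_{M,N}(q+a_i)/\eta_{M,N}(q)$) yields \eqref{fe1}.

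For \eqref{infinprod2}, I would first note that replacing $b_0$ by $b_0+ka_i$ inside $\eta_{M-1,N}$ merely translates the argument of the defining $\mathcal{S}_N$-sum, so that $\eta_{M-1,N}(q\,|\,\hat{a}_i,\,b_0+ka_i,\,b_1,\cdots,b_N)=\eta_{M-1,N}(q+ka_i\,|\,\hat{a}_i,b)\big/\eta_{M-1,N}(ka_i\,|\,\hat{a}_i,b)$, whose logarithm equals $\int_0^\infty e^{-ka_it}(e^{-tq}-1)\,e^{-b_0t}\frac{\prod_{j=1}^N(1-e^{-b_jt})}{\prod_{l\ne i}(1-e^{-a_lt})}\frac{dt}{t}$ by \eqref{LKH}. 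Summing over $k\ge0$ and using $\sum_{k\ge0}e^{-ka_it}=(1-e^{-a_it})^{-1}$ for $t>0$ reproduces exactly $\log\eta_{M,N}(q\,|\,a,b)$. The interchange of $\sum_k$ and $\int$ is justified by Tonelli when $q\ge0$ is real, where every summand has one sign ($e^{-tq}-1\le0$ while the rest of the integrand is positive), after which both sides, holomorphic on $\{\Re(q)>-b_0\}$, agree there by analytic continuation; the same estimate shows the product converges locally uniformly. One could instead iterate \eqref{fe1} and control $\eta_{M,N}(q+Ka_i\,|\,a,b)$ as $K\to\infty$ via \eqref{asym}, but the integral route is cleaner.

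Equation \eqref{infbarnesfac} follows identically, summing $\log\eta_{0,N}(q\,|\,b_0+\Omega,\,b_1,\cdots,b_N)$ over the whole lattice $\Omega=\sum_{i=1}^Mn_ia_i$, $n_i\ge0$: the geometric sum factorizes as $\sum_{n_1,\cdots,n_M\ge0}e^{-\Omega t}=\prod_{i=1}^M(1-e^{-a_it})^{-1}$, again giving $\log\eta_{M,N}(q\,|\,a,b)$, and with $\Gamma_0(w)=1/w$ one evaluates $\eta_{0,N}(q\,|\,b_0+\Omega,b)$ to be precisely the bracketed alternating product of ratios in \eqref{infbarnesfac} (even-size index subsets in the numerator, odd-size in the denominator); alternatively, iterating \eqref{infinprod2} a total of $M$ times reduces $\eta_{M,N}$ to $\eta_{0,N}$. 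For the probabilistic reformulations, Theorem \ref{main} identifies each factor $\eta_{M-1,N}(q\,|\,\hat{a}_i,\,b_0+ka_i,\,b_1,\cdots,b_N)$ (resp.\ $\eta_{0,N}(q\,|\,b_0+\Omega,\,b_1,\cdots,b_N)$) as the Mellin transform of $\beta_{M-1,N}(\hat{a}_i,\,b_0+ka_i,\,b_1,\cdots,b_N)$ (resp.\ $\beta_{0,N}(b_0+\Omega,\,b_1,\cdots,b_N)$), a distribution on $(0,1]$ and hence moment-determinate; taking the factors independent, the partial products decrease and are bounded below by $0$, hence converge almost surely, and since the moments of $\beta_{M,N}(a,b)$ are exactly the convergent infinite products above, dominated convergence identifies the law of the a.s.\ limit with that of $\beta_{M,N}(a,b)$, which is the asserted equality in law.

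I expect the main work to lie in the analytic bookkeeping: fixing the range of $q$ in which the term-by-term manipulations and the Tonelli interchange are legitimate, and verifying that the infinite products of Mellin transforms genuinely correspond to products of independent Barnes beta variables — including the almost-sure convergence of those products — rather than being merely formal identities. By contrast \eqref{fe1} is immediate from \eqref{feq}, and once \eqref{LKH} is available the two factorizations are just resummations of geometric series.
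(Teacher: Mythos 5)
Your proposal is correct and follows essentially the same route the paper itself uses: the paper quotes Theorem \ref{FunctEquat} from \cite{Me13} without reproving it here, but its proof of the directly analogous Theorem \ref{FunctEquatSine} in Section 6 proceeds exactly as you do --- expanding $\prod_i(1-e^{-a_it})^{-1}$ as geometric/lattice sums inside the L\'evy--Khinchine integral \eqref{LKH} and invoking the translation identity $\eta_{M,N}(q\,|\,a,\,b_0+x,b_1\cdots b_N)\,\eta_{M,N}(x\,|\,a,\,b)=\eta_{M,N}(q+x\,|\,a,\,b)$, with \eqref{fe1} read off from \eqref{feq} via linearity of $\mathcal{S}_N$. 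No gaps; your extra care with Tonelli and the almost-sure convergence of the infinite products of independent $(0,1]$-valued factors is more explicit than the paper but consistent with it.
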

We note that the factorizations in \eqref{infinprod2} and \eqref{infbarnesfac} correspond to the Shintani and Barnes factorizations of the multiple gamma function, see \eqref{generalfactorization} and \eqref{barnes}, respectively.  
%%\section{}
The functional equation in \eqref{fe1} gives us the moments.
\begin{corollary}[Moments]
Assume $a_i=1.$ 
Let $k\in\mathbb{N}.$
\begin{align}
{\bf E}\bigl[\beta_{M, N}(a, b)^{k }\bigr] &  =
\exp\Bigl(-\sum\limits_{l=0}^{k-1} \bigl(\mathcal{S}_N
\log\Gamma_{M-1}\bigr)(l \,|\,\hat{a}_i, b)\Bigr), \label{posmom} \\
{\bf E}\bigl[\beta_{M, N}(a, b)^{-k }\bigr] & =
\exp\Bigl(\sum\limits_{l=0}^{k-1} \bigl(\mathcal{S}_N
\log\Gamma_{M-1}\bigr)(-(l+1) \,|\,\hat{a}_i, b)\Bigr), \; k<b_0. \label{negmom}
\end{align}
\end{corollary}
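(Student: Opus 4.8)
The plan is to derive both moment formulas by iterating the functional equation \eqref{fe1} of Theorem \ref{FunctEquat}. Two facts established above are used throughout: first, $\eta_{M, N}(0\,|\,a,\,b) = 1$, immediate from Definition \ref{bdef} since the exponent in \eqref{eta} vanishes at $q = 0$; second, ${\bf E}\bigl[\beta_{M, N}(a, b)^q\bigr] = \eta_{M, N}(q\,|a,\,b)$ for all $\Re(q) > -b_0$ by Theorem \ref{main}. Specializing \eqref{fe1} to $a_i = 1$ gives
\begin{equation*}
\eta_{M, N}(q+1\,|\,a,\,b) = \eta_{M, N}(q\,|\,a,\,b)\,\exp\bigl(-(\mathcal{S}_N \log\Gamma_{M-1})(q\,|\,\hat{a}_i, b)\bigr), \quad q \in \mathbb{C} - (-\infty, -b_0].
\end{equation*}

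For \eqref{posmom}, I would apply this relation successively at $q = k-1, k-2, \dots, 1, 0$ and telescope the resulting product. Since $\beta_{M, N}(a, b)$ takes values in $(0, 1]$, every positive integer moment is automatically finite and equals $\eta_{M, N}(k\,|a,\,b)$, so
\begin{equation*}
{\bf E}\bigl[\beta_{M, N}(a, b)^{k}\bigr] = \eta_{M, N}(k\,|a,\,b) = \eta_{M, N}(0\,|\,a,\,b)\,\exp\Bigl(-\sum_{l=0}^{k-1}(\mathcal{S}_N \log\Gamma_{M-1})(l\,|\,\hat{a}_i, b)\Bigr),
\end{equation*}
which reduces to \eqref{posmom} upon using $\eta_{M, N}(0\,|\,a,\,b) = 1$. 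No hypothesis beyond $a_i = 1$ is needed here: at each integer $q = l \geq 0$ the arguments of $\log\Gamma_{M-1}$ entering $\mathcal{S}_N \log\Gamma_{M-1}$ have the form $l + b_0 + (\text{a subsum of the }b_j)$, hence positive real part, so all multiple gamma values are well defined.

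For \eqref{negmom} the same argument runs in reverse. Rewriting the specialized equation as $\eta_{M, N}(q\,|\,a,\,b) = \eta_{M, N}(q+1\,|\,a,\,b)\,\exp\bigl((\mathcal{S}_N \log\Gamma_{M-1})(q\,|\,\hat{a}_i, b)\bigr)$, I would apply it at $q = -1, -2, \dots, -k$, telescope back to $\eta_{M, N}(0\,|\,a,\,b) = 1$, and reindex the sum $\sum_{l=1}^{k}$ by $l \mapsto l+1$ to obtain
\begin{equation*}
\eta_{M, N}(-k\,|a,\,b) = \exp\Bigl(\sum_{l=0}^{k-1}(\mathcal{S}_N \log\Gamma_{M-1})(-(l+1)\,|\,\hat{a}_i, b)\Bigr);
\end{equation*}
since $\Re(-k) > -b_0$, the left side equals ${\bf E}\bigl[\beta_{M, N}(a, b)^{-k}\bigr]$ by Theorem \ref{main}, giving \eqref{negmom}. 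The one delicate point — and the source of the hypothesis $k < b_0$ — is that the backward chain must stay inside the common domain of validity: \eqref{fe1} holds only for $q \notin (-\infty, -b_0]$, and $(\mathcal{S}_N \log\Gamma_{M-1})(-l\,|\,\hat{a}_i, b)$ is finite only when $-l + b_0 > 0$; both conditions hold for every $l = 1, \dots, k$ precisely when $k < b_0$. I do not expect any deeper obstacle, as the corollary is essentially an iteration of \eqref{fe1} together with careful tracking of this domain.
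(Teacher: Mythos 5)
Your proof is correct and follows exactly the route the paper intends: iterate the functional equation \eqref{fe1} with $a_i=1$, telescope down to $\eta_{M,N}(0\,|\,a,b)=1$, and invoke Theorem \ref{main} to identify $\eta_{M,N}(\pm k)$ with the corresponding moments, with the constraint $k<b_0$ arising precisely as you describe from the domain $\Re(q)>-b_0$. The paper offers no further detail beyond ``the functional equation gives us the moments,'' so there is nothing to add.
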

The scaling property in \eqref{scale} gives us the scaling invariance.
\begin{theorem}[Scaling invariance]\label{barnesbetascaling}
Let $\kappa>0.$ Then,
\begin{equation}
\beta^{\kappa}_{M, N}(\kappa\,a, \kappa\,b) \overset{{\rm in \,law}}{=}\beta_{M,
N}(a,\,b).
\end{equation}
\end{theorem}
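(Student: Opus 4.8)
The plan is to reduce the distributional identity to an identity between Mellin transforms and then read it off from the scaling property \eqref{scale}. By Theorem~\ref{main} the law of $\beta_{M,N}(a,b)$ is supported on the bounded interval $(0,1]$, hence is determined by its integer moments (Hausdorff moment problem); the same applies to $\beta_{M,N}^{\kappa}(\kappa a,\kappa b)$, since raising a variable in $(0,1]$ to the power $\kappa>0$ keeps it in $(0,1]$. The variable $\beta_{M,N}(\kappa a,\kappa b)$ itself exists because the parameter constraints underlying Theorem~\ref{main} are homogeneous. Therefore it suffices to verify that the two variables have the same Mellin transform, i.e.\ that
\begin{equation*}
{\bf E}\bigl[\beta_{M,N}^{\kappa}(\kappa a,\kappa b)^q\bigr]={\bf E}\bigl[\beta_{M,N}(\kappa a,\kappa b)^{\kappa q}\bigr]=\eta_{M,N}(\kappa q\,|\,\kappa a,\kappa b)=\eta_{M,N}(q\,|\,a,b),\quad \Re(q)>-b_0.
\end{equation*}

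Next I would insert the scaling property \eqref{scale} into the definition \eqref{eta}. Each argument appearing under $\mathcal S_N$ in \eqref{S} is of the form $q+b_0+b_{k_1}+\cdots+b_{k_p}$, so after rescaling $q$, $a$, $b$ all by $\kappa$ it becomes $\kappa(q+b_0+b_{k_1}+\cdots+b_{k_p})$, and \eqref{scale} gives $\log\Gamma_M(\kappa(\cdot)\,|\,\kappa a)=-\tfrac{\log\kappa}{M!}B_{M,M}(\cdot\,|\,a)+\log\Gamma_M(\cdot\,|\,a)$. Summing over the combinatorial structure of $\mathcal S_N$ and subtracting the value at $q=0$ yields
\begin{equation*}
\log\eta_{M,N}(\kappa q\,|\,\kappa a,\kappa b)-\log\eta_{M,N}(q\,|\,a,b)=-\frac{\log\kappa}{M!}\Bigl[(\mathcal S_N B_{M,M})(q\,|\,a,b)-(\mathcal S_N B_{M,M})(0\,|\,a,b)\Bigr],
\end{equation*}
so everything comes down to showing that $(\mathcal S_N B_{M,M})(q\,|\,a,b)$ is independent of $q$.

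This last point is the only step requiring care, and it follows from the interplay between the degree of the Bernoulli polynomial and the order of the operator. By \eqref{Bdefa} one has $(\mathcal S_N B_{M,M})(q\,|\,a,b)=\frac{d^M}{dt^M}\big|_{t=0}\bigl[f_M(t|a)\,e^{-(q+b_0)t}\prod_{j=1}^N(1-e^{-b_jt})\bigr]$, because $\mathcal S_N$ applied to $e^{-xt}$ produces the factor $e^{-(q+b_0)t}\prod_{j=1}^N(1-e^{-b_jt})$. Since $f_M(\cdot|a)$ is analytic and nonzero at $0$ while $\prod_{j=1}^N(1-e^{-b_jt})=O(t^N)$, the bracketed function is $O(t^N)$; as $M\le N$, its $M$-th derivative at $0$ vanishes when $M<N$ and equals the $q$-independent constant $N!\,f_N(0|a)\prod_{j=1}^N b_j$ when $M=N$. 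In either case $(\mathcal S_N B_{M,M})(q\,|\,a,b)-(\mathcal S_N B_{M,M})(0\,|\,a,b)=0$, which gives $\eta_{M,N}(\kappa q\,|\,\kappa a,\kappa b)=\eta_{M,N}(q\,|\,a,b)$ and hence the claim. As a cross-check, the same identity drops out of the L\'evy--Khinchine representation \eqref{LKH}: writing it with arguments $(\kappa q\,|\,\kappa a,\kappa b)$ and substituting $s=\kappa t$ leaves the scale-invariant measure $dt/t=ds/s$ untouched and returns exactly the exponent of $\eta_{M,N}(q\,|\,a,b)$.
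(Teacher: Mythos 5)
Your proof is correct and follows the route the paper intends: it states that the scaling invariance "is a corollary of \eqref{scale}" together with the vanishing of $(\mathcal S_N B_{M,M})(q\,|\,a,b)-(\mathcal S_N B_{M,M})(0\,|\,a,b)$ for $M\le N$, which is exactly the combinatorial cancellation you establish (and which is the content of the identities in Lemma \ref{MyLemma} specialized to $\mathcal S_N$). Your closing cross-check via the substitution $s=\kappa t$ in the L\'evy--Khinchine integral \eqref{LKH} is a nice independent confirmation but not a different method in substance.
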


The interested reader can find additional properties of Barnes beta distributions and many examples in \cite{Me13}.

The multiplication property in \eqref{multiply} gives us the structure of the Selberg integral probability distribution, as was first shown in \cite{Me14}.
Recall the classical Selberg integral. 
\begin{gather}
\int\limits_{[0,\,1]^l} \prod_{i=1}^l s_i^{\lambda_1}(1-s_i)^{\lambda_2}\, \prod\limits_{i<j}^l |s_i-s_j|^{-2/\tau} ds_1\cdots ds_l = \nonumber \\
\nonumber \\ \prod_{k=0}^{l-1}\frac{\Gamma(1-(k+1)/\tau)
\Gamma(1+\lambda_1-k/\tau)\Gamma(1+\lambda_2-k/\tau)}
{\Gamma(1-1/\tau)\Gamma(2+\lambda_1+\lambda_2-(l+k-1)/\tau)}, \label{Selberg}
\end{gather}
confer \cite{ForresterBook} for a modern treatment. We will assume for simplicity that $\lambda_i\geq 0$ and $\tau>1.$
In what follows we write $\tau$ as an abbreviation of $(1,\tau)$ in the list of parameters of the double gamma and $\beta_{2,2}.$
\begin{theorem}[Selberg Integral Probability Distribution]\label{BSM}
Define the function
\begin{align}
\mathfrak{M}(q\,|\,\tau,\lambda_1,\lambda_2)  \triangleq &
\Bigl(\frac{2\pi\,\tau^{\frac{1}{\tau}}}{\Gamma\bigl(1-1/\tau\bigr)}\Bigr)^q\;
%\tau^{\frac{q}{\tau}} (2\pi)^{q}\,\Gamma^{-q}\bigl(1-1/\tau\bigr)
\frac{\Gamma_2(1-q+\tau(1+\lambda_1)\,|\,\tau)}{\Gamma_2(1+\tau(1+\lambda_1)\,|\,\tau)}\times
\nonumber \\ & \times
\frac{\Gamma_2(1-q+\tau(1+\lambda_2)\,|\,\tau)}{\Gamma_2(1+\tau(1+\lambda_2)\,|\,\tau)}
\frac{\Gamma_2(-q+\tau\,|\,\tau)}{\Gamma_2(\tau\,|\,\tau)}
\times
\nonumber \\ & \times
\frac{\Gamma_2(2-q+\tau(2+\lambda_1+\lambda_2)\,|\,\tau)}{\Gamma_2(2-2q+\tau(2+\lambda_1+\lambda_2)\,|\,\tau)}
\label{M}
\end{align}
for $\Re(q)<\tau.$ Then, $\mathfrak{M}(q\,|\,\tau,\lambda_1,\lambda_2)$ is the Mellin transform of a
probability distribution $M_{(\tau, \lambda_1, \lambda_2)}$ on $(0,\infty),$
\begin{equation}\label{Mint}
\mathfrak{M}(q\,|\,\tau,\lambda_1,\lambda_2) = {\bf E}\bigl[M_{(\tau,\lambda_1,\lambda_2)}^q\bigr], \;\Re(q)<\tau,
\end{equation}
and its positive moments satisfy for $l<\tau$ %%($0\leq l <
%%\tau$) %%and negative ($l\in\mathbb{N}$) integral moments satisfy
\begin{equation}
{\bf E}\bigl[M^l_{(\tau, \lambda_1, \lambda_2)}\bigr]  =
\prod_{k=0}^{l-1} \frac{\Gamma(1-(k+1)/\tau)}{\Gamma(1-1/\tau)}
\frac{\Gamma(1+\lambda_1-k/\tau)\Gamma(1+\lambda_2-k/\tau)}
{\Gamma(2+\lambda_1+\lambda_2-(l+k-1)/\tau)}.\label{Selbergmomentsverified} 
\end{equation}
$\log M_{(\tau, \lambda_1, \lambda_2)}$ is absolutely continuous and
infinitely divisible. Define the distributions
\begin{align}
L \triangleq &\exp\bigl(\mathcal{N}(0,\,4\log 2/\tau)\bigr), \\ Y
\triangleq &\tau\,y^{-1-\tau}\exp\bigl(-y^{-\tau}\bigr)\,dy,\; y>0,\label{Ydist}
\end{align}
\emph{i.e.} $\log L$ is a zero-mean normal with variance $4\log
2/\tau$ and $Y$ is a power of the exponential. Let $X_1,\,X_2,\,X_3$ have the $\beta^{-1}_{2,
2}(\tau, b)$ distribution with the parameters %%($\beta^{-1}_{2,2}(\tau, b)\equiv\beta^{-1}_{2,2}((1, \tau), b))$
\begin{align}
X_1 &\triangleq \beta_{2,2}^{-1}\Bigl(\tau,
b_0=1+\tau+\tau\lambda_1,\,b_1=\tau(\lambda_2-\lambda_1)/2, \,
b_2=\tau(\lambda_2-\lambda_1)/2\Bigr),\\
X_2 & \triangleq \beta_{2,2}^{-1}\Bigl(\tau,
b_0=1+\tau+\tau(\lambda_1+\lambda_2)/2,\,b_1=1/2,\,b_2=\tau/2\Bigr),\\
X_3 & \triangleq \beta_{2,2}^{-1}\Bigl(\tau, b_0=1+\tau,\,
b_1=\frac{1+\tau+\tau\lambda_1+\tau\lambda_2}{2}, \,
b_2=\frac{1+\tau+\tau\lambda_1+\tau\lambda_2}{2}\Bigr).
\end{align}
Then,
\begin{equation}\label{Decomposition}
M_{(\tau, \lambda_1, \lambda_2)} \overset{{\rm in \,law}}{=} 2\pi\,
2^{-\bigl[3(1+\tau)+2\tau(\lambda_1+\lambda_2)\bigr]/\tau}\,\Gamma\bigl(1-1/\tau\bigr)^{-1}\,
L\,X_1\,X_2\,X_3\,Y.
\end{equation}
The Mellin transform is involution invariant under
\begin{equation}
\tau\rightarrow \frac{1}{\tau},\; q\rightarrow \frac{q}{\tau}, \; \lambda_i\rightarrow \tau\lambda_i.
\end{equation}
\begin{align}
\mathfrak{M}\bigl(\frac{q}{\tau}\,|\,\frac{1}{\tau},\tau\lambda_1,\tau\lambda_2\bigr) (2\pi)^{-\frac{q}{\tau}}\,\Gamma^{\frac{q}{\tau}}(1-\tau) \Gamma(1-\frac{q}{\tau}) = &
\mathfrak{M}(q\,|\,\tau,\lambda_1,\lambda_2) (2\pi)^{-q} \times \nonumber \\ & \times \Gamma^{q}(1-\frac{1}{\tau}) \Gamma(1-q).\label{involutionint}
\end{align}
\end{theorem}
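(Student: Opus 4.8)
The plan is to reduce the whole theorem to a single algebraic identity for the Mellin transform \eqref{M} and then read off every probabilistic assertion from the structural results of Section~2. Concretely, I would first prove that, for $\Re(q)<\tau$, the function $\mathfrak{M}(q\,|\,\tau,\lambda_1,\lambda_2)$ factors as $c^{q}\cdot 2^{2q^2/\tau}\cdot\Gamma(1-q/\tau)\cdot\prod_{i=1}^{3}\eta_{2,2}(-q\,|\,\tau,b^{(i)})$, where $c=2\pi\,2^{-[3(1+\tau)+2\tau(\lambda_1+\lambda_2)]/\tau}\,\Gamma(1-1/\tau)^{-1}$ and $b^{(1)},b^{(2)},b^{(3)}$ are the three parameter triples of $X_1,X_2,X_3$ in the statement. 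Each factor on the right is manifestly the Mellin transform of one of the independent building blocks: $c$ is deterministic, $2^{2q^2/\tau}={\bf E}[L^q]$ for the stated lognormal $L$, $\Gamma(1-q/\tau)={\bf E}[Y^q]$ since $Y^{-\tau}$ is a unit exponential, and $\eta_{2,2}(-q\,|\,\tau,b^{(i)})={\bf E}[X_i^q]$ for $X_i=\beta_{2,2}^{-1}(\tau,b^{(i)})$. Once this identity holds, \eqref{Mint}, the decomposition \eqref{Decomposition}, the absolute continuity and infinite divisibility of $\log M_{(\tau,\lambda_1,\lambda_2)}$, and the moment formula \eqref{Selbergmomentsverified} all follow quickly.

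The identity is the computational core, and it is driven by the special properties of $\Gamma_2$ recalled in Section~2. The mixed $-q$/$-2q$ structure of \eqref{M} is unlocked by applying the multiplication formula \eqref{multiply} with $k=2$, $a=(1,\tau)$ to the denominator $\Gamma_2(2-2q+\tau(2+\lambda_1+\lambda_2)\,|\,\tau)$ of the last ratio: it becomes a product of four $\Gamma_2$'s at the arguments $w,\,w+\tfrac{1}{2},\,w+\tfrac{\tau}{2},\,w+\tfrac{1+\tau}{2}$ with $w=1-q+\tau(1+(\lambda_1+\lambda_2)/2)$, divided out, times $2^{B_{2,2}(2-2q+\cdots\,|\,\tau)/2}$. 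Since $B_{2,2}(x\,|\,a)$ is quadratic in $x$, this power of $2$ contributes $2^{2q^2/\tau}$, the Mellin transform of $L$, with its remaining linear and constant parts absorbed into $c^{q}$. Separately, the functional equation \eqref{feq} with $a_1=1$ converts the ratio $\Gamma_2(-q+\tau\,|\,\tau)/\Gamma_2(\tau\,|\,\tau)$ into $\tau^{-q/\tau}\Gamma(1-q/\tau)\cdot\Gamma_2(1-q+\tau\,|\,\tau)/\Gamma_2(1+\tau\,|\,\tau)$ via \eqref{gamma1}, supplying the $Y$ factor $\Gamma(1-q/\tau)$ and a power of $\tau$ for $c^{q}$. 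Finally one checks that the four new $\Gamma_2^{-1}$'s together with the four surviving $\Gamma_2$ numerators (those of \eqref{M}, with the $-q+\tau$ argument shifted to $1-q+\tau$) reassemble --- after internal telescoping among the half-integer-shifted arguments --- into exactly $\prod_{i=1}^{3}\eta_{2,2}(-q\,|\,\tau,b^{(i)})$, while the remaining $q$-independent $\Gamma_2$ ratios and the leftover powers of $2$, $\tau$, $2\pi$, $\Gamma(1-1/\tau)$ collapse to $c^{q}$ times the prefactor of \eqref{M}. Verifying this last collapse, which requires explicit values of the multiple Bernoulli polynomials $B_{2,m}(x\,|\,1,\tau)$, $m=0,1,2$, from \eqref{Bdefa}, is the genuinely delicate, error-prone step, and I expect it to be the main obstacle.

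Granting the identity, the rest is short. By Theorem~\ref{main} with $M=N=2$, each $\beta_{2,2}(\tau,b^{(i)})$ is a genuine distribution on $(0,1)$, so $X_i=\beta_{2,2}^{-1}(\tau,b^{(i)})$ lives on $(1,\infty)\subset(0,\infty)$, while $L$ and $Y$ are explicit distributions on $(0,\infty)$; hence the product in \eqref{Decomposition} has Mellin transform $\mathfrak{M}(q\,|\,\tau,\lambda_1,\lambda_2)$ throughout $\Re(q)<\tau$. Since a distribution on $(0,\infty)$ whose Mellin transform is finite and analytic on an open vertical strip is determined by it (Mellin inversion, equivalently two-sided Laplace uniqueness for $\log$ of the variable), this yields both \eqref{Mint} and \eqref{Decomposition}. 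For $\log M_{(\tau,\lambda_1,\lambda_2)}$: $\log L$ is Gaussian, each $\log X_i=-\log\beta_{2,2}(\tau,b^{(i)})$ is infinitely divisible by the L\'evy-Khinchine formula \eqref{LKH}, and $\log Y$ is a positive multiple of $-\log(\mathrm{Exp}(1))$, a Gumbel variable, which is classically infinitely divisible; summing gives infinite divisibility of $\log M_{(\tau,\lambda_1,\lambda_2)}$, and absolute continuity follows from the Gaussian summand $\log L$. The moments \eqref{Selbergmomentsverified} are obtained by evaluating \eqref{M} at $q=l\in\mathbb{N}$, $l<\tau$: repeated use of \eqref{feq} with $a_1=1$ telescopes every $\Gamma_2$ ratio at the integer-shifted arguments into a finite product of $\Gamma_1(\,\cdot\,|\,\tau)$'s, which \eqref{gamma1} turns into ordinary Gamma functions, and matching indices reproduces precisely the right-hand side of the classical Selberg integral \eqref{Selberg}, i.e.\ \eqref{Selbergmomentsverified}. (Equivalently, one multiplies the integer moments of $c$, $L$, $Y$, $X_1$, $X_2$, $X_3$.)

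For the involution invariance \eqref{involutionint} I would substitute $\tau\mapsto 1/\tau$, $q\mapsto q/\tau$, $\lambda_i\mapsto\tau\lambda_i$ directly into \eqref{M} and invoke the scaling property \eqref{scale}: with $\kappa=\tau$ and the symmetry of $\Gamma_2$ in $(a_1,a_2)$, every $\Gamma_2(\,\cdot\,|\,(1,1/\tau))$ becomes $\tau^{B_{2,2}(\,\cdot\,|\,(1,1/\tau))/2}\,\Gamma_2(\tau\,\cdot\,|\,(1,\tau))$, and a short check shows each scaled argument coincides --- up to an integer shift handled by \eqref{feq}, which is exactly what accounts for the $\Gamma(1-q)$ versus $\Gamma(1-q/\tau)$ and $\Gamma(1-1/\tau)$ versus $\Gamma(1-\tau)$ discrepancies in \eqref{involutionint} --- with an argument of the original $\mathfrak{M}(q\,|\,\tau,\lambda_1,\lambda_2)$; reconciling the leftover powers of $\tau$ and $2\pi$ via the quadratic Bernoulli polynomials then gives \eqref{involutionint}. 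Alternatively, and more transparently, \eqref{involutionint} can be deduced from the already-established decomposition \eqref{Decomposition}: under $\tau\mapsto 1/\tau$ the factor $L$ rescales (its variance $4\log 2/\tau$ goes to $4\tau\log 2$), $Y$ changes exponent, and each $X_i$ is left invariant by the Barnes beta scaling invariance of Theorem~\ref{barnesbetascaling} applied with a suitable $\kappa$, so self-duality of $\mathfrak{M}$ is simply the image of \eqref{Decomposition} under the involution.
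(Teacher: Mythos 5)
Your proposal is correct and follows essentially the intended route: the paper itself does not prove Theorem \ref{BSM} (it is quoted from \cite{MeIMRN} and \cite{Me14}, with the remark that the multiplication property \eqref{multiply} ``gives us the structure of the Selberg integral probability distribution''), and your use of \eqref{multiply} with $k=2$ to split the $-2q$ argument and generate the lognormal factor, together with \eqref{feq} and \eqref{gamma1} to extract $\Gamma(1-q/\tau)$, is exactly that route. The identification of the remaining $\Gamma_2$ ratios with $\prod_{i=1}^{3}\eta_{2,2}(-q\,|\,\tau,b^{(i)})$ and the linear-in-$q$ part of $B_{2,2}$ reproducing the constant $2^{-[3(1+\tau)+2\tau(\lambda_1+\lambda_2)]/\tau}$ both check out, and your argument closely parallels the paper's own Section~6 proof of the Morris-integral analogue (Theorem \ref{theoremcircle}).
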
 
\begin{remark}\label{myremark}
The special case of $\lambda_1=\lambda_2=0$ of  Theorem \ref{BSM} first appeared in \cite{Me4}. The general case was first
considered in \cite{FLDR}, who gave an equivalent expression for the right-hand
side of \eqref{M} and verified \eqref{Selbergmomentsverified} without proving analytically that their formula corresponds to the Mellin transform of a probability distribution. The first proof of the existence of the Selberg integral distribution in full generality was given in \cite{MeIMRN},
where we also discovered the decomposition in \eqref{Decomposition}, followed by a new,  purely probabilistic proof 
of  \eqref{Decomposition} in \cite{Me14}.
The involution invariance of the Mellin transform in the equivalent form of self-duality, see \eqref{selfdual} below,  was first discovered in the special
case of $\lambda_1=\lambda_2=0$ in \cite{FLDR}. We extended it to the general case in the form of \eqref{involutionint} in \cite{Me14},
followed by the general form of self-duality in \cite{FLD}. The interested reader can find additional information about the Selberg integral
distribution such as negative moments, asymptotic expansion, moment determinacy questions, functional equations, and infinite factorizations
in \cite{MeIMRN}.
\end{remark}

\section{A New Family of Barnes Beta Distributions}
\noindent
Let $M\in\mathbb{N},$ $a=(a_1,\cdots, a_{M}),$ and $b=(b_0, b_1,\cdots,b_{M-1}),$ all assumed to be positive.
In other words, $N=M-1$ in the sense of Barnes beta distributions. 
Define
\begin{equation}
\eta_{M, M-1}(q|a, b) \triangleq \exp\Bigl( \bigl(\mathcal{S}_{M-1}
\log\Gamma_M\bigr)(q\,|a,\,b) - \bigl(\mathcal{S}_{M-1} \log\Gamma_M\bigr)(0\,|a,\,b) \Bigr). \label{etaL}
%%= & \eta_{M, M-1}(q|a, b).
\end{equation}
For example, in the case of $M=2$ we have
\begin{equation}
\eta_{2, 1}(q|a, b) =
\frac{\Gamma_2(q+b_0\,|\,a)}{\Gamma_2(b_0\,|\,a)}
\frac{\Gamma_2(b_0+b_1\,|\,a)}{\Gamma_2(q+b_0+b_1\,|\,a)}.
\end{equation}
Such products were first discovered in \cite{Kuz} and \cite{KuzPar} and then studied in depth in \cite{LetSim} in the context of
the Mellin transform of certain functionals of the stable L\'evy process. %%We will know show how to 
\begin{theorem}[Existence and Structure]\label{mainsine}
Assume
\begin{equation}
%%|a|-|b|>0.
\Re(q)>-b_0.
\end{equation}
$\eta_{M, M-1}(q\,|a,\,b)$ is the Mellin transform of a probability
distribution $\beta_{M, M-1}(a,b)$  on $(0, \infty).$
\begin{equation}
{\bf E}\bigl[\beta_{M, M-1}(a,b)^q\bigr] = \eta_{M, M-1}(q\,|a,\,b). 
%|a|-|b|>\Re(q)>-b_0.
\end{equation}
The distribution $\log\beta_{M, M-1}(a,b)$ is infinitely divisible and absolutely continuous on
$\mathbb{R}$ and has the L\'evy-Khinchine decomposition %%for $|a|-|b|>\Re(q)>-b_0,$
\begin{align}
{\bf E}\Bigl[\exp\bigl(q\log\beta_{M, M-1}(a,b)\bigr)\Bigr] = 
\exp&\Bigl(
\int\limits_0^\infty (e^{-tq}-1+qt) e^{-b_0
t} \frac{
\prod\limits_{j=1}^{M-1} (1-e^{-b_j t})}{\prod\limits_{i=1}^M (1-e^{-a_i t})}
\frac{dt}{t} +\nonumber \\&+ q
\int\limits_0^\infty \Bigl[\frac{e^{-t}}{t} \frac{\prod\limits_{j=1}^{M-1} b_j}{\prod\limits_{i=1}^{M} a_i}
-e^{-b_0
t}\frac{
\prod\limits_{j=1}^{M-1} (1-e^{-b_j t})}{\prod\limits_{i=1}^M (1-e^{-a_i t})}\Bigr]
dt
\Bigr).\label{LKHsine}
\end{align} 
$\eta_{M, M-1}(q\,|a,\,b)$ satisfies the functional equation in \eqref{fe1} and moment formulas in \eqref{posmom}
and \eqref{negmom}. Given $\kappa>0,$ the scaling invariance property of $\beta_{M, M-1}(a,b)$ is
\begin{equation}\label{scalinvgen}
\beta^{\kappa}_{M, M-1}(\kappa\,a, \kappa\,b) \overset{{\rm in \,law}}{=}\kappa^{\prod\limits_{j=1}^{M-1} b_j/\prod\limits_{i=1}^M a_i} \;\beta_{M, M-1}(a,\,b).
\end{equation}
\end{theorem}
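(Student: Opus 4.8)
The plan is to follow the same strategy that established Theorem \ref{main}, but keeping careful track of the extra terms in the Malmst\'en-type formula \eqref{keyRuij} that survive when $N=M-1$ rather than $N\geq M$. First I would substitute the integral representation \eqref{keyRuij} for $\log\Gamma_M$ into the definition \eqref{etaL} and apply the combinatorial operator $\mathcal{S}_{M-1}$ term by term. The key algebraic fact is that $\mathcal{S}_{M-1}$ annihilates any polynomial in its argument of degree strictly less than $M-1$: indeed $\mathcal{S}_N$ applied to $1, x, \dots, x^{N-1}$ vanishes by the standard alternating-sum identity $\sum_{p=0}^N (-1)^p \binom{N}{p} = 0$ together with its higher analogues, so only the leading-degree part of the subtracted polynomial $\sum_{k=0}^{M-1}\frac{t^k}{k!}B_{M,k}(w|a) + \frac{t^M e^{-t}}{M!}B_{M,M}(w|a)$ fails to be killed. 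Concretely, acting with $\mathcal{S}_{M-1}$ on $\log\Gamma_M(w|a)$ leaves the kernel
\[
e^{-wt} f_M(t|a)\;\longmapsto\; \Bigl(\mathcal{S}_{M-1} e^{-\cdot\,t}\Bigr)(q|b)\, f_M(t|a) = e^{-(q+b_0)t}\prod_{j=1}^{M-1}(1-e^{-b_j t})\, f_M(t|a),
\]
divided by $t^{M+1}$, which together with $f_M(t|a)=t^M\prod(1-e^{-a_i t})^{-1}$ gives exactly the density $e^{-b_0 t}\prod_j(1-e^{-b_j t})/\bigl(t\prod_i(1-e^{-a_i t})\bigr)$ appearing in \eqref{LKH}. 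The new phenomenon is the degree-$(M-1)$ and degree-$M$ Bernoulli terms: because $\mathcal{S}_{M-1}$ only kills polynomials of degree up to $M-2$, the $B_{M,M-1}$ and $B_{M,M}$ pieces contribute surviving terms linear in $q$ (after subtracting the value at $q=0$), and these are precisely the source of the $+qt$ compensator and the second integral in \eqref{LKHsine}.

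Second, I would massage the resulting expression into the L\'evy--Khinchine form. After subtracting the $q=0$ normalization, the surviving integrand is $(e^{-(q+b_0)t}-e^{-b_0 t})\,g(t)\,dt/t$ where $g(t)=\prod_j(1-e^{-b_j t})/\prod_i(1-e^{-a_i t})$; near $t=0$ one has $g(t)\sim \bigl(\prod_j b_j/\prod_i a_i\bigr)\,t^{M-1-M}\cdot t^{?}$ — more precisely $\prod_j(1-e^{-b_j t})\sim t^{M-1}\prod b_j$ and $\prod_i(1-e^{-a_i t})\sim t^M \prod a_i$, so $g(t)\sim \bigl(\prod b_j/\prod a_i\bigr) t^{-1}$, which is not integrable against $dt/t$ at the origin. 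This is exactly why the compound-Poisson representation of Theorem \ref{main} breaks down and must be replaced by the absolutely continuous L\'evy measure with a drift. I would add and subtract a $qt$ term inside the first integral to make it convergent at $0$ (the L\'evy measure $\nu(dt) = e^{-b_0 t} g(t)\,dt/t$ then has $\int \min(t,t^2)\,\nu(dt)<\infty$), and collect the leftover linear-in-$q$ pieces — including the $B_{M,M-1}, B_{M,M}$ contributions and the regularization of $\int e^{-t}/t$-type terms — into the explicit drift coefficient, which one recognizes as the second integral in \eqref{LKHsine} after identifying $\prod_j b_j/\prod_i a_i$ with the relevant leading Bernoulli coefficient. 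Having put $\log\eta_{M,M-1}(q|a,b)$ in the form $\int(e^{-tq}-1+qt)\,\nu(dt) + q\,c$ with $\nu\geq 0$ a genuine L\'evy measure on $(0,\infty)$ and $c\in\mathbb{R}$, the L\'evy--Khinchine theorem for infinitely divisible laws on $\mathbb{R}$ (with no Gaussian part) guarantees existence of a random variable $\log\beta_{M,M-1}(a,b)$ with this as its characteristic/cumulant generating function; absolute continuity on $\mathbb{R}$ follows because $\nu$ has infinite mass (its $t^{-1}$ singularity at $0$ makes $\nu((0,\infty))=\infty$), so the law is not compound Poisson and, being a nontrivial infinitely divisible law with infinite L\'evy measure, is absolutely continuous — cf. the criteria in \cite{SteVHar}. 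Analyticity of $\eta_{M,M-1}$ on $\Re(q)>-b_0$ and the matching of Mellin transforms then pins down $\beta_{M,M-1}(a,b)$ as claimed.

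Third, the functional equation \eqref{fe1} and the moment formulas \eqref{posmom}--\eqref{negmom} transfer verbatim: \eqref{fe1} was a formal consequence of the functional equation \eqref{feq} for $\Gamma_M$ together with the definition of $\eta$ via $\mathcal{S}_N\log\Gamma_M$, and none of that argument used $M\leq N$ — it only used that $\mathcal{S}_N$ acts on the argument of $\log\Gamma_M$, so the same manipulation with $N=M-1$ gives \eqref{fe1}, and iterating it telescopes to \eqref{posmom}, \eqref{negmom} exactly as in the Corollary (the constraint $k<b_0$ in \eqref{negmom} comes from the pole of $\eta$ at $q=-b_0$, unchanged here). Finally, for the scaling invariance \eqref{scalinvgen} I would apply the scaling property \eqref{scale} of $\Gamma_M$ inside \eqref{etaL}: each $\Gamma_M(\kappa w|\kappa a)$ picks up a factor $\kappa^{-B_{M,M}(w|a)/M!}$, and applying $\mathcal{S}_{M-1}$ to the exponent $-B_{M,M}(w|a)\log\kappa/M!$ (a polynomial of degree $M$ in $w$) leaves only its top-degree coefficient times $\log\kappa$ by the annihilation property above; since the coefficient of $w^M$ in $B_{M,M}(w|a)/M!$ works out to $1/\prod_i a_i$ — wait, more carefully, $\mathcal{S}_{M-1}$ of a degree-$M$ polynomial in $q+b_0+\dots$ returns $(M-1)!$ times the alternating combinatorial sum of the arguments to the $M$-th... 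I would compute this coefficient directly and identify it with $\prod_j b_j/\prod_i a_i$, yielding the stated shift factor $\kappa^{\prod_j b_j/\prod_i a_i}$. The main obstacle I anticipate is not any single step but the bookkeeping in the second one: correctly isolating which linear-in-$q$ terms go into the convergent regularized integral versus the residual drift, and verifying that the drift constant assembled from the $B_{M,M-1}$ Bernoulli term, the $B_{M,M}e^{-t}$ term, and the $\int e^{-t}/t$ regularization collapses to precisely the compact form written in \eqref{LKHsine}; this requires using the explicit low-order values $B_{M,M}(0|a)$, $B_{M,M-1}(0|a)$ in terms of elementary symmetric functions of the $a_i$, which is routine but where sign and normalization errors are easy to make.
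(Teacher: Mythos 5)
Your proposal follows essentially the same route as the paper's proof: substitute Ruijsenaars' formula \eqref{keyRuij} into \eqref{etaL}, use the action of $\mathcal{S}_{M-1}$ on the Bernoulli polynomials (the paper's Lemma \ref{MyLemma}) to isolate the surviving kernel $(e^{-qt}-1)e^{-b_0t}\prod_j(1-e^{-b_jt})/\bigl(t\prod_i(1-e^{-a_it})\bigr)$ plus a linear-in-$q$ drift, regularize with the $+qt$ compensator to reach \eqref{LKHsine}, and then invoke the divergence criteria of \cite{SteVHar} for absolute continuity and \eqref{scale} together with the top Bernoulli coefficient for \eqref{scalinvgen}. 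One bookkeeping correction: the degree-$(M-1)$ term $B_{M,M-1}$ does \emph{not} contribute to the drift --- $\mathcal{S}_{M-1}$ maps it to a $q$-independent constant that cancels against the $q=0$ normalization --- so only the $B_{M,M}$ term survives, producing exactly $q e^{-t}\prod_j b_j/(t\prod_i a_i)$; also note that absolute continuity requires the infinite mass of the (here absolutely continuous) L\'evy measure rather than infinite mass alone, and the full support on $\mathbb{R}$ needs the second divergence $\int_0^\infty e^{-b_0t}\prod_j(1-e^{-b_jt})/\prod_i(1-e^{-a_it})\,dt=\infty$, which the paper checks separately.
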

\begin{theorem}[Asymptotics]\label{newetaasympt}
Given  $|\arg(q)|<\pi,$
\begin{equation}\label{ourasymN}
\eta_{M, M-1}(q\,|\,b) = \exp\Bigl(\bigl(\prod\limits_{j=1}^{M-1} b_j/\prod\limits_{i=1}^M a_i\bigr) q\log(q) +
O(q)\Bigr), \;q\rightarrow\infty.
\end{equation}
The Stieltjes moment problem for $\beta_{M, M-1}(a,b)$ is determinate (unique solution) iff 
\begin{equation}\label{condition}
\prod\limits_{j=1}^{M-1} b_j \leq 2\prod\limits_{i=1}^M a_i  .
\end{equation}
\end{theorem}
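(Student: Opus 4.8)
The plan is to establish the asymptotic expansion \eqref{ourasymN} directly from the L\'evy–Khinchine representation \eqref{LKHsine}, and then deduce the moment-determinacy dichotomy from the growth rate of the moments via the Carleman/Krein criteria. For the asymptotics, I would first observe that $\log\eta_{M,M-1}(q\,|\,a,b)$ is, up to the linear-in-$q$ correction terms in \eqref{LKHsine}, the sum $(\mathcal{S}_{M-1}\log\Gamma_M)(q\,|\,a,b)$, so it suffices to apply the Ruijsenaars asymptotic expansion \eqref{asym}–\eqref{asymremainder} termwise to each shifted $\log\Gamma_M$ appearing in the $\mathcal{S}_{M-1}$ sum. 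The leading term of $\log\Gamma_M(w\,|\,a)$ is $-\frac{1}{M!}B_{M,M}(w\,|\,a)\log w$; when one forms the alternating combinatorial sum $\mathcal{S}_{M-1}$ over the $2^{M-1}$ arguments $q+b_0+b_{k_1}+\cdots+b_{k_p}$, the polynomial-in-$q$ parts of $B_{M,M}$ of degree $<M$ telescope away (this is exactly the mechanism that makes $\beta_{M,M-1}$ well-defined with $N=M-1$), leaving only the top-degree monomial. Since $B_{M,M}(w\,|\,a)$ has leading term $w^M/\prod_i a_i$, and the finite-difference operator in the $b_j$'s applied $M-1$ times to $w^M$ picks out the coefficient proportional to $\bigl(\prod_{j=1}^{M-1} b_j\bigr)\, w$, I expect the surviving leading behaviour to be $\bigl(\prod_{j=1}^{M-1}b_j/\prod_{i=1}^{M}a_i\bigr)\,q\log q$, with all the remaining contributions — the $\sum_k \cdots$ sum in \eqref{asym}, the $R_M=O(w^{-1})$ remainders, and the explicit linear integrals in \eqref{LKHsine} — absorbed into the $O(q)$ term. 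A short computation confirming that the $q\log q$ coefficient is invariant under the $\mathcal{S}_{M-1}$ symmetrization is the one genuinely combinatorial step.

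For the determinacy claim, I would use the standard asymptotics of the positive integer moments. From \eqref{posmom} with $a_i=1$ (or after rescaling via \eqref{scalinvgen}, which only changes constants), ${\bf E}[\beta_{M,M-1}(a,b)^k]=\exp\bigl(-\sum_{l=0}^{k-1}(\mathcal{S}_{M-1}\log\Gamma_{M-1})(l\,|\,\hat a_i,b)\bigr)$; equivalently one can read the moment growth off \eqref{ourasymN} by setting $q=k$, giving $\log{\bf E}[\beta^k]\sim c\,k\log k$ with $c=\prod_{j}b_j/\prod_i a_i$. Hence ${\bf E}[\beta^k]^{1/k}\sim k^{c}$, so that ${\bf E}[\beta^k]^{-1/(2k)}\sim k^{-c/2}$. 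The Carleman condition $\sum_k {\bf E}[\beta^{k}]^{-1/(2k)}=\infty$ for determinacy of the Stieltjes moment problem therefore holds precisely when $c\le 2$, i.e. when $\prod_{j=1}^{M-1}b_j\le 2\prod_{i=1}^{M}a_i$, which is \eqref{condition}. For the converse — that the moment problem is genuinely indeterminate when $c>2$ — I would invoke the Krein-type sufficient condition for indeterminacy: since $\log\beta_{M,M-1}(a,b)$ is absolutely continuous on $\mathbb R$ by Theorem \ref{mainsine}, with a density $f$, the Krein integral $\int_{\mathbb R}\frac{-\log f(e^{x})e^{x}}{1+x^{2}}\,dx<\infty$ (equivalently the log-density of $\beta$ itself) forces indeterminacy, and the tail of $f$ can be estimated from \eqref{LKHsine} by a saddle-point/Tauberian argument showing $-\log f(y)\sim \frac{c}{e}\,\frac{(\log y)^{?}}{\cdots}$-type growth that is slow enough to make the Krein integral converge exactly when $c>2$.

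The main obstacle I anticipate is the converse (indeterminacy) direction: proving genuine non-uniqueness requires control of the \emph{density} of $\beta_{M,M-1}(a,b)$, not merely its moments, since the Carleman condition is only sufficient for determinacy. I would handle this by extracting the precise tail asymptotics of the density from the L\'evy measure $e^{-b_0 t}\prod_j(1-e^{-b_jt})/\prod_i(1-e^{-a_it})\,dt/t$ in \eqref{LKHsine} — which near $t=0$ behaves like $\bigl(\prod_j b_j/\prod_i a_i\bigr)\,t^{-1}\,dt$ and near $t=\infty$ decays exponentially — and translating this via a classical Tauberian theorem for infinitely divisible laws into the rate at which $-\log({\rm density})$ grows; the borderline $c=2$ is exactly where that rate crosses the threshold of the Krein logarithmic-integral test. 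If a fully rigorous density estimate proves delicate, an acceptable fallback is to cite the analogous analysis already carried out for $\beta_{2,2}$ and the Selberg integral distribution in \cite{MeIMRN}, adapting it to the present setting, and to present the indeterminacy half as following from the same mechanism.
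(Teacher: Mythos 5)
Your route is essentially the paper's for the asymptotic expansion \eqref{ourasymN}: the paper likewise feeds the Ruijsenaars expansion \eqref{asym} into the $\mathcal{S}_{M-1}$ sum, uses Lemma \ref{MyLemma} to kill the polynomial remainder up to $O(q)$, and isolates the $q\log q$ term from the leading piece $-B_{M,M}(w\,|\,a)\log(w)/M!$ via \eqref{id3}. The one step you defer as ``a short computation'' is where the paper actually does some work: because the $\log$ factor varies across the $2^{M-1}$ shifted arguments, the cross terms $B_{M,M}(q+\cdot)\bigl[\log(q+\cdot)-\log q\bigr]$ are individually of order $q^{M-1}$, and showing they collapse to $O(q)$ after the alternating sum requires the $1/q$-expansion with the auxiliary function $g(t)=f(t)e^{-qt}\frac{d^r}{dt^r}\bigl[e^{-b_0t}\prod_j(1-e^{-b_jt})\bigr]$, yielding $\bigl(\mathcal{S}_{M-1}B_{M,M}(x|a)\log(x)\bigr)(q|b)=\log(q)\bigl(\mathcal{S}_{M-1}B_{M,M}(x|a)\bigr)(q|b)+O(q)$; your sketch asserts the right answer but does not supply this mechanism. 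On determinacy the two arguments diverge in presentation though not in substance: your Carleman computation ($m_k^{-1/(2k)}\sim k^{-c/2}$, divergent iff $c\le 2$) gives the sufficiency half cleanly, but your Krein half is genuinely incomplete --- the tail estimate of the density is not carried out (your formula contains a literal placeholder) and, as you note, Krein requires information beyond the moments. The paper sidesteps exactly this by observing that \eqref{ourasymN} matches the Mellin-transform asymptotics of a generalized gamma law with exponent $\lambda=1/c$ and invoking the complete solution of the Stieltjes problem for that class in the Lin--Stoyanov reference \cite{Stoyanov}, where $\lambda\ge 1/2$, i.e. $c\le 2$, is both necessary and sufficient. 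In other words, the ``fallback'' you offer at the end is, in effect, the proof the paper gives; if you want a self-contained argument you would still have to produce the density tail estimate, which neither you nor the paper actually does.
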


It is also interesting to look at the ratio of two independent Barnes beta distributions of type $(M, M-1)$ as this ratio
has remarkable factorization properties. Let 
\begin{equation}
\bar{b} \triangleq \bigl(\bar{b}_0, b_1, \cdots b_{M-1}\bigr)
\end{equation}
for some fixed $\bar{b}_0>0$
and define
\begin{equation}
\beta_{M, M-1}(a,b, \bar{b})\triangleq \beta_{M, M-1}(a,b)\,\beta^{-1}_{M, M-1}(a,\bar{b}).
\end{equation}
Then, the Mellin transform of %%multiple sine distributions 
$\beta_{M, M-1}(a,b, \bar{b})$ 
satisfies %%a function equation and 
two factorizations and scaling invariance that are similar to those of Barnes beta distributions.
\begin{theorem}[Properties]\label{FunctEquatSine}
Let $M\in\mathbb{N},$ $\bar{b}_0>\Re(q)>-b_0,$ $i=1\cdots M,$  $\Omega\triangleq \sum_{i=1}^M n_i \, a_i.$
\begin{align}
\eta_{M, M-1}(q\,|\,a, b, \bar{b})  = & \prod\limits_{k=0}^\infty \frac{\eta_{M-1,M-1}(q+k
a_i\,|\,\hat{a}_i, b)}{\eta_{M-1,M-1}(k
a_i\,|\,\hat{a}_i, b)} \,\frac{\eta_{M-1,M-1}(-q+k
a_i\,|\,\hat{a}_i, \bar{b})}{\eta_{M-1,M-1}(k
a_i\,|\,\hat{a}_i, \bar{b})}, \label{infinprod2L} \\
\eta_{M, M-1}(q\,|\,a, b, \bar{b}) = & \prod\limits_{n_1,\cdots ,n_M=0}^\infty \Bigl[
\frac{b_0+\Omega}{q+b_0+\Omega}\,\frac{\bar{b}_0+\Omega}{-q+\bar{b}_0+\Omega}
\times \nonumber \\
& \times
\prod\limits_{j_1=1}^{M-1} \frac{q+b_0+b_{j_1}+\Omega}
{b_0+b_{j_1}+\Omega} \,\frac{-q+\bar{b}_0+b_{j_1}+\Omega}
{\bar{b}_0+b_{j_1}+\Omega}
\times \nonumber \\
& \times\prod\limits_{j_1<j_2}^{M-1} \frac{b_0+b_{j_1}+b_{j_2}+\Omega}
{q+b_0+b_{j_1}+b_{j_2}+\Omega} \, \frac{\bar{b}_0+b_{j_1}+b_{j_2}+\Omega}
{-q+\bar{b}_0+b_{j_1}+b_{j_2}+\Omega} \cdots\Bigr].\label{infinprod1L}
%%\prod\limits_{j_1<j_2<j_3}^N \frac{q+b_0+b_{j_1}+b_{j_2}+b_{j_3}+\Omega}{b_0+b_{j_1}+b_{j_2}+b_{j_3}+\Omega}\cdots\Bigr].
\end{align}
Probabilistically, these factorizations are equivalent to, respectively, 
\begin{align}
\beta_{M, M-1}(a,\,b, \bar{b}) \overset{{\rm in \,law}}{=}&
\prod\limits_{k=0}^\infty \beta_{M-1, M-1}(\hat{a}_i,\,b_0+ka_i,\,\,b_1,\cdots, b_{M-1})\times \nonumber \\ & \times \beta_{M-1, M-1}^{-1}(\hat{a}_i,\,\bar{b}_0+ka_i,\,b_1,\cdots, b_{M-1}), \label{probshin}\\
\beta_{M, M-1}(a,\,b, \bar{b}) \overset{{\rm in \,law}}{=}&
\prod\limits_{n_1,\cdots ,n_M=0}^\infty \beta_{0, M-1}(b_0+\Omega,\,b_1,\cdots, b_{M-1})\times \nonumber \\ & \times\beta_{0, M-1}^{-1}(\bar{b}_0+\Omega, \,\,b_1,\cdots, b_{M-1}). \label{probbarnes}
\end{align}
Let $\kappa>0.$ Then,
\begin{equation}\label{scalinvL}
\beta^{\kappa}_{M, M-1}(\kappa\,a, \kappa\,b, \kappa\,\bar{b}) \overset{{\rm in \,law}}{=}\beta_{M, M-1}(a,\,b, \bar{b}).
\end{equation}
\end{theorem}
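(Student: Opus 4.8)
The plan is to derive all three statements from the integral representation of $\log\eta_{M,M-1}$ implicit in the L\'evy--Khinchine formula \eqref{LKHsine} of Theorem \ref{mainsine}, exactly paralleling the proof of Theorem \ref{FunctEquat} for $M\leq N$. First I would record that, by definition,
\[
\log\eta_{M,M-1}(q\,|\,a,b,\bar b) = \log\eta_{M,M-1}(q\,|\,a,b) + \log\eta_{M,M-1}(-q\,|\,a,\bar b),
\]
so the whole theorem reduces to establishing the two factorizations for a single $\eta_{M,M-1}(q\,|\,a,b)$ and then symmetrizing in $q\mapsto -q$, $b\mapsto\bar b$. For the Shintani-type factorization \eqref{infinprod2L}, I would start from the Shintani factorization \eqref{generalfactorization} of $\Gamma_M(w\,|\,a,a_i)=\Gamma_M(w\,|\,a)$ (relabelling so $a_M$ plays the role of the split-off parameter $a_i$), apply the combinatorial operator $\mathcal S_{M-1}$ in the $b$-variables to both sides, and use the fact that $\mathcal S_{M-1}$ annihilates polynomials in $q$ of degree $\leq M-1$ — which kills the $\phi_M$ and $\Psi_M$ contributions since those are degree-$M$ polynomials and $\mathcal S_{M-1}$ is a degree-$(M-1)$ finite difference, hence knocks degree $M$ down to degree $1$; one must check that the surviving linear term is exactly what combines with the drift term in \eqref{LKHsine} to give a convergent product. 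The cleanest route, though, is to verify \eqref{infinprod2L} at the level of L\'evy exponents: expand $\prod_{j}(1-e^{-b_jt})/\prod_{i\le M}(1-e^{-a_it})$ by writing $1/(1-e^{-a_Mt}) = \sum_{k\ge0} e^{-ka_Mt}$, which turns the type-$(M,M-1)$ integrand into a sum over $k$ of type-$(M-1,M-1)$ integrands with $b_0$ shifted to $b_0+ka_M$; summing the corresponding $(e^{-tq}-1+qt)$-type integrals gives \eqref{infinprod2L} after matching the compensating linear-in-$q$ terms. The Barnes-type factorization \eqref{infinprod1L} is obtained the same way, now expanding \emph{every} $1/(1-e^{-a_it}) = \sum_{n_i\ge0}e^{-n_ia_it}$ simultaneously, so the integrand becomes a sum over $(n_1,\dots,n_M)$ of pure $\beta_{0,M-1}$ integrands with $b_0\to b_0+\Omega$; each such integral $\int_0^\infty(e^{-tq}-1+qt)e^{-(b_0+\Omega+\cdots)t}\,dt/t$ evaluates to an elementary $\log$ of the ratio appearing in \eqref{infinprod1L}, and the alternating structure in the $b_{j}$'s is precisely the expansion of $\prod_j(1-e^{-b_jt})$.

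The scaling invariance \eqref{scalinvL} I would get directly from the already-established scaling invariance \eqref{scalinvgen} of $\beta_{M,M-1}(a,b)$: since $\beta_{M,M-1}(a,b,\bar b)$ is the independent product $\beta_{M,M-1}(a,b)\,\beta_{M,M-1}^{-1}(a,\bar b)$, scaling each factor by $\kappa$ produces prefactors $\kappa^{\prod b_j/\prod a_i}$ and $\kappa^{-\prod \bar b_j/\prod a_i}$; because $\bar b=(\bar b_0,b_1,\dots,b_{M-1})$ shares $b_1,\dots,b_{M-1}$ with $b$, the products $\prod_{j=1}^{M-1}b_j$ and $\prod_{j=1}^{M-1}\bar b_j$ are \emph{identical} (the product runs over $j=1,\dots,M-1$ only, not including $b_0$), so the two prefactors cancel and no $\kappa$-power survives. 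Equivalently, at the Mellin level, $\eta_{M,M-1}(\kappa q\,|\,\kappa a,\kappa b,\kappa\bar b)=\eta_{M,M-1}(q\,|\,a,b,\bar b)$ follows from \eqref{scale} applied to each $\Gamma_M$ in the definition, the Bernoulli prefactors $\kappa^{-B_{M,M}(\cdot)/M!}$ cancelling in the alternating ratio structure of $\mathcal S_{M-1}$ up to the linear term, which cancels between the $b$- and $\bar b$-parts.

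The main obstacle I anticipate is bookkeeping the linear-in-$q$ compensating terms. Unlike the $M\leq N$ case, the L\'evy measure here is not integrable near $t=0$, so the exponent in \eqref{LKHsine} carries both a $(e^{-tq}-1+qt)$ integrand and an explicit $q$-linear correction; when I break the type-$(M,M-1)$ integral into the sum over $k$ (for Shintani) or over $\Omega$ (for Barnes), each summand individually needs its own $qt$-compensation to converge, and these per-summand linear terms must reassemble — after summation — into exactly the global linear correction in \eqref{LKHsine} plus whatever linear pieces the $\eta_{M-1,M-1}$ factors on the right-hand side already carry (those being absolutely continuous but with their own drift). Verifying this requires computing $\sum_k \prod_{j}b_j/((b_0+ka_i)\prod_{\ell\ne i}a_\ell)$-type sums and recognizing them as telescoping or as the value of the full product $\prod b_j/\prod a_i$; the normalization factors $\eta_{M-1,M-1}(ka_i\,|\,\hat a_i,b)$ in the denominators of \eqref{infinprod2L} are precisely the devices that absorb this, so the argument closes once one checks that $\sum_k[\log\eta_{M-1,M-1}(q+ka_i\,|\,\hat a_i,b)-\log\eta_{M-1,M-1}(ka_i\,|\,\hat a_i,b)]$ has the same $t$-integral representation as $\log\eta_{M,M-1}(q\,|\,a,b)$, which is a direct term-by-term comparison of integrands after the geometric expansion. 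The probabilistic restatements \eqref{probshin}--\eqref{probbarnes} then follow by uniqueness of the Mellin transform on $(0,\infty)$, noting that convergence of the infinite products in law is guaranteed by the convergence of the corresponding sums of L\'evy exponents, which is part of the integral-representation check.
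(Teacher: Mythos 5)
Your overall strategy is the paper's own: the paper also proves \eqref{infinprod2L} at the level of L\'evy exponents by expanding $1/(1-e^{-a_Mt})=\sum_{k\geq 0}e^{-ka_Mt}$, obtains \eqref{infinprod1L}/\eqref{probbarnes} by pushing the expansion through all $M$ factors (it does this by applying \eqref{probbarnesfac} to each factor of \eqref{probshin}, which amounts to the same thing), and gets \eqref{scalinvL} from \eqref{scalinvgen} via exactly the cancellation of $\kappa$-prefactors you describe.

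There is, however, one step that fails as literally written: the claim that ``the whole theorem reduces to establishing the two factorizations for a single $\eta_{M,M-1}(q\,|\,a,b)$ and then symmetrizing.'' A single $\eta_{M,M-1}(q\,|\,a,b)$ admits no such factorization. Writing $\sigma(t)=\prod_{j=1}^{M-1}(1-e^{-b_jt})/\prod_{i=1}^{M-1}(1-e^{-a_it})$, each $k$-summand of the compensated exponent in \eqref{LKHsine} is $\int_0^\infty(e^{-tq}-1+qt)e^{-(b_0+ka_M)t}\sigma(t)\,dt/t$ plus a piece of the drift, whereas $\log\bigl[\eta_{M-1,M-1}(q+ka_M\,|\,\hat a_M,b)/\eta_{M-1,M-1}(ka_M\,|\,\hat a_M,b)\bigr]$ is the \emph{uncompensated} integral $\int_0^\infty(e^{-tq}-1)e^{-(b_0+ka_M)t}\sigma(t)\,dt/t$ by \eqref{LKH}; the discrepancy $q\int_0^\infty e^{-(b_0+ka_M)t}\sigma(t)\,dt$ is finite for each $k$ but its sum over $k$ equals $q\int_0^\infty e^{-b_0t}\sigma(t)(1-e^{-a_Mt})^{-1}dt=\infty$, precisely the divergence recorded in the proof of Theorem \ref{mainsine}. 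So the per-summand linear terms do not reassemble into anything finite for one factor alone --- this is exactly why the theorem is stated only for the ratio. The fix is to reverse your order of operations, as the paper does: add $\log\eta_{M,M-1}(q\,|\,a,b)$ and $\log\eta_{M,M-1}(-q\,|\,a,\bar b)$ \emph{first} in the form \eqref{keyid}, whereupon the explicit linear terms $\pm q e^{-t}\prod_j b_j/(t\prod_i a_i)$ cancel (they depend only on the shared $b_1,\dots,b_{M-1}$, not on $b_0$ versus $\bar b_0$) and the remaining integrand $\bigl((e^{-tq}-1)e^{-b_0t}+(e^{tq}-1)e^{-\bar b_0t}\bigr)$ is $O(t^2)$ near $t=0$, giving the convergent exponent \eqref{keyidb}; only then expand geometrically, so that every summand is exactly a logarithm of a ratio of $\eta_{M-1,M-1}$'s with no leftover linear terms. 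With that single reordering your sketch becomes the paper's proof.
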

\begin{remark}
The multiple sine function \cite{KurKoya} is defined by
\begin{equation}
S_M(w|a) \triangleq \frac{\Gamma_M(|a|-w|a)^{(-1)^M}}{\Gamma_M(w|a)},
\end{equation}
where $M=0,1,2\cdots,$ $|a|=\sum_{i=1}^M a_i,$ and $a=(a_1,\cdots, a_M)$ are fixed positive
constants. %%This function plays an important role in ...
Let 
\begin{equation}\label{b0spec}
\bar{b}_0 = |a|-\sum_{j=0}^{M-1} b_j>0.
\end{equation}
Then, for this particular choice of $\bar{b}_0$ we have the interesting identity
\begin{align}
\eta_{M, M-1}(q|a, b, \bar{b}) & = \exp\Bigl(\bigl(\mathcal{S}_{M-1}
\log S_M\bigr)(0\,|a,\,b) - \bigl(\mathcal{S}_{M-1} \log S_M\bigr)(q\,|a,\,b)\Bigr), \nonumber \\
& =  \frac{S_M(b_0|a)}{S_M(q+b_0|a)}\prod\limits_{j_1=1}^{M-1} \frac{S_M(q+b_0+b_{j_1}|a)}{S_M(b_0+b_{j_1}|a)} \times\nonumber \\
&\times
\prod\limits_{j_1<j_2}^{M-1} \frac{S_M(b_0+b_{j_1}+b_{j_2}|a)}{S_M(q+b_0+b_{j_1}+b_{j_2}|a)} \times\nonumber \\
&\times\prod\limits_{j_1<j_2<j_3}^{M-1} \frac{S_M(q+b_0+b_{j_1}+b_{j_2}+b_{j_3}|a)}{S_M(b_0+b_{j_1}+b_{j_2}+b_{j_3}|a)} \cdots,
\end{align}
until all $M-1$ indices are exhausted. 
\end{remark}

We will illustrate the general theory with the special cases of $\beta_{1,0}$ and $\beta_{2,1}.$ Let $M=1.$
\begin{align}
{\bf E}[\beta_{1,0}(a, b)^q] = & \frac{\Gamma_1(q+b_0\,|\,a)}{\Gamma_1(b_0\,|\,a)}, \\
= & a^{\frac{q}{a}} \frac{\Gamma(\frac{q+b_0}{a})}{\Gamma(\frac{b_0}{a})}, %%\frac{\Gamma_1(-q+\bar{b}_0)}{\Gamma_1(\bar{b}_0)}
\end{align}
by \eqref{gamma1} so that $\beta_{1,0}$ is a Fr\'echet distribution. This distribution plays an important role
in the structure of both Selberg and Morris integral probability distributions. For example, the $Y$ distribution in \eqref{Ydist} is
\begin{equation}
Y = \tau^{1/\tau}\,\beta^{-1}_{1,0}(a=\tau, b_0=\tau). 
\end{equation}
In particular, if $\bar{b}_0=a-b_0$ as in \eqref{b0spec},
\begin{align}
{\bf E}[\beta_{1,0}(a, b, \bar{b})^q] = & \frac{\Gamma(\frac{q+b_0}{a})}{\Gamma(\frac{b_0}{a})} \frac{\Gamma(1-\frac{q+b_0}{a})}
{\Gamma(1-\frac{b_0}{a})}, \\
= & \frac{\sin(\frac{\pi b_0}{a})}{\sin(\frac{\pi (q+b_0)}{a})}.
\end{align}
Now, let $M=2,$ $a=(a_1, a_2),$  $b=(b_0, b_1),$ $\bar{b}=(a_1+a_2-b_0-b_1, b_1).$
\begin{align}
{\bf E}[\beta_{2,1}(a, b)^q] = &\frac{\Gamma_2(q+b_0\,|\,a)}{\Gamma_2(b_0\,|\,a)}\frac{\Gamma_2(b_0+b_1\,|\,a)}{\Gamma_2(q+b_0+b_1\,|\,a)}, \\
{\bf E}[\beta_{2,1}(a, b, \bar{b})^q] = & \frac{S_2(b_0\,|\,a)}{S_2(q+b_0\,|\,a)}\frac{S_2(q+b_0+b_1\,|\,a)}{S_2(b_0+b_1\,|\,a)}.
\end{align}
For reader's convenience, the proofs of Theorems \ref{mainsine} -- \ref{FunctEquatSine} are deferred to Section 6.

\section{Application to the Morris Integral}
\noindent In this section we will consider the problem of finding a positive probability distribution having the property
that its positive integral moments are given by the Morris integral. Throughout this section we let $\tau>1$
and restrict $\lambda_1, \lambda_2\geq 0$ for simplicity.  As we did in Section 2, we write $\tau$ as an abbreviation of 
$a=(1,\tau)$ in the list of parameters of the double gamma function and $\beta_{2,2}(a, b).$ Recall the Morris integral, 
see Chapter 4 of \cite{ForresterBook}.
\begin{gather}
\int\limits_{[-\pi,\,\pi]^n} \prod\limits_{l=1}^n  e^{ i \theta_l(\lambda_1-\lambda_2)/2}  |1+e^{ i\theta_l}|^{\lambda_1+\lambda_2} \,
\prod\limits_{k<l}^n |e^{ i \theta_k}-e^{ i\theta_l}|^{-2/\tau} \,d\theta = \nonumber \\ =(2\pi)^n\prod\limits_{j=0}^{n-1} \frac{\Gamma(1+\lambda_1+\lambda_2- \frac{j}{\tau})\,\Gamma(1-\frac{(j+1)}{\tau})}{\Gamma(1+\lambda_1- \frac{j}{\tau})\,\Gamma(1+\lambda_2- \frac{j}{\tau})\,\Gamma(1-\frac{1}{\tau})} \label{morris2}.
\end{gather}
We wish to construct a probability distribution such that its $n$th moment coincides with the right-hand side of \eqref{morris2} for $n<\tau.$
The interest in such a construction will become clear in the next section.
\begin{theorem}[Existence and Properties]\label{theoremcircle}
The function 
\begin{align}\label{thefunction}
\mathfrak{M}(q\,|\tau,\,\lambda_1,\,\lambda_2)=&\frac{(2\pi\tau^{\frac{1}{\tau}})^q}{\Gamma^q\bigl(1-\frac{1}{\tau}\bigr)}
\frac{\Gamma_2(\tau(\lambda_1+\lambda_2+1)+1-q\,|\,\tau)}{\Gamma_2(\tau(\lambda_1+\lambda_2+1)+1\,|\,\tau)}
\frac{\Gamma_2(-q+\tau\,|\,\tau)}{\Gamma_2(\tau\,|\,\tau)}\times \nonumber \\ & \times
\frac{\Gamma_2(\tau(1+\lambda_1)+1\,|\,\tau)}{\Gamma_2(\tau(1+\lambda_1)+1-q\,|\,\tau)}
\frac{\Gamma_2(\tau(1+\lambda_2)+1\,|\,\tau)}{\Gamma_2(\tau(1+\lambda_2)+1-q\,|\,\tau)}
\end{align}
reproduces the product in Eq. \eqref{morris2} when $q=n<\tau$
and is the Mellin transform of the distribution
\begin{align}
M_{(\tau, \lambda_1, \lambda_2)} = & \frac{2\pi \tau^{1/\tau}}{\Gamma(1-1/\tau)} \,\beta^{-1}_{22}(\tau, b_0=\tau,\,b_1=1+\tau \lambda_1, \,b_2=1+\tau \lambda_2) \times \nonumber \\ & \times
\beta_{1,0}^{-1}(\tau, b_0=\tau(\lambda_1+\lambda_2+1)+1), \label{thedecompcircle}
%%Y(\tau, a, b),
\end{align}
where $\beta^{-1}_{22}(a, b)$ is the inverse Barnes beta of type $(2,2)$ and $\beta^{-1}_{1,0}(a, b)$ is the independent inverse Barnes beta of type $(1,0).$ In particular, $\log M_{(\tau, \lambda_1, \lambda_2)}$ is infinitely divisible, the Stieltjes moment problem for $M^{-1}_{(\tau, \lambda_1, \lambda_2)}$ is determinate (unique solution), and the negative moments
are given by
\begin{equation}
{\bf E}[M^{-n}_{(\tau, \lambda_1, \lambda_2)}] = (2\pi)^{-n} \prod\limits_{j=0}^{n-1} \frac{\Gamma(1+\lambda_1+\frac{(j+1)}{\tau}) \,\Gamma(1+\lambda_2+\frac{(j+1)}{\tau})\Gamma(1-\frac{1}{\tau})}{\Gamma(1+\lambda_1+\lambda_2+\frac{(j+1)}{\tau})\,\Gamma(1+\frac{j}{\tau})}. 
\end{equation}
The Mellin transform is involution invariant under
\begin{equation}\label{invtranscircle}
\tau\rightarrow \frac{1}{\tau},\; q\rightarrow \frac{q}{\tau}, \; \lambda_i\rightarrow \tau\lambda_i.
\end{equation}
\begin{align}
\mathfrak{M}\bigl(\frac{q}{\tau}\,|\,\frac{1}{\tau},\tau\lambda_1,\tau\lambda_2\bigr) (2\pi)^{-\frac{q}{\tau}}\,\Gamma^{\frac{q}{\tau}}(1-\tau) \Gamma(1-\frac{q}{\tau}) = &
\mathfrak{M}(q\,|\,\tau,\lambda_1,\lambda_2) (2\pi)^{-q} \times \nonumber \\ & \times \Gamma^{q}(1-\frac{1}{\tau}) \Gamma(1-q). \label{invcircle}
\end{align}
In the special case of $\lambda_1=\lambda_2=0,$ we have
\begin{gather}
\mathfrak{M}(q\,|\tau, 0, 0)=\frac{(2\pi)^q}{\Gamma^q\bigl(1-\frac{1}{\tau}\bigr)}\Gamma\bigl(1-\frac{q}{\tau}\bigr), \\
M_{(\tau, 0, 0)} = \frac{2\pi \tau^{1/\tau}}{\Gamma(1-1/\tau)}\beta_{1,0}^{-1}(\tau, b_0=\tau).
\end{gather}
\end{theorem}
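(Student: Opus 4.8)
The plan is to reduce the entire theorem to a single algebraic identity that exhibits $\mathfrak{M}$ as a product of Barnes beta Mellin transforms, and then to read off every probabilistic statement from the results of Sections 2 and 3. Throughout set $b=(\tau,\,1+\tau\lambda_1,\,1+\tau\lambda_2)$, $\tilde b_0=\tau(\lambda_1+\lambda_2+1)+1$, and $c=2\pi\tau^{1/\tau}/\Gamma(1-1/\tau)$. The central step is the identity
\[
\mathfrak{M}(q\,|\,\tau,\lambda_1,\lambda_2)=c^{\,q}\;\eta_{2,2}(-q\,|\,\tau,b)\;\eta_{1,0}(-q\,|\,\tau,\tilde b_0),\qquad \Re(q)<\tau.
\]
To prove it, note that in the product form of $\eta_{2,2}(-q\,|\,\tau,b)$ the fourth (four‑index) factor carries the argument $b_0+b_1+b_2=\tilde b_0+1$; applying the functional equation \eqref{feq} with $i=1$, i.e. $\Gamma_2(w+1\,|\,\tau)=\Gamma_2(w\,|\,\tau)/\Gamma_1(w\,|\,\tau)$, to its numerator and denominator replaces $\tilde b_0+1$ by $\tilde b_0$ at the cost of the factor $\Gamma_1(\tilde b_0\,|\,\tau)/\Gamma_1(\tilde b_0-q\,|\,\tau)=\eta_{1,0}(-q\,|\,\tau,\tilde b_0)^{-1}$; multiplying back by $\eta_{1,0}(-q\,|\,\tau,\tilde b_0)$ and by $c^{\,q}$ recovers \eqref{thefunction} term by term. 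Since $\beta_{2,2}(\tau,b)$ exists by Theorem \ref{main} (type $(2,2)$, all parameters positive) and $\beta_{1,0}(\tau,\tilde b_0)$ exists by the $M=1$ case of Theorem \ref{mainsine}, and ${\bf E}[(X^{-1})^q]={\bf E}[X^{-q}]$, the identity says precisely that $\mathfrak{M}(q\,|\,\cdot)={\bf E}[M^q]$ with $M$ as in \eqref{thedecompcircle}, the two inverse Barnes betas independent; the Mellin strip $\Re(q)<\tau$ is the intersection of $\Re(q)<\tau$ with $\Re(q)<\tilde b_0$, binding at $\tau$ because $\lambda_i\ge 0$.

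The rest then falls out quickly. Infinite divisibility: $\log M=\log c+\bigl(-\log\beta_{2,2}(\tau,b)\bigr)+\bigl(-\log\beta_{1,0}(\tau,\tilde b_0)\bigr)$, where $-\log\beta_{2,2}$ is infinitely divisible on $[0,\infty)$ by Theorem \ref{main} and $\log\beta_{1,0}$ is infinitely divisible on $\mathbb{R}$ by Theorem \ref{mainsine}; a deterministic shift of a sum of two independent infinitely divisible laws is infinitely divisible. Integer moments and the Morris identity: since $\mathfrak{M}(n\,|\,\cdot)={\bf E}[M^n]=c^{\,n}\,{\bf E}[\beta_{2,2}(\tau,b)^{-n}]\,{\bf E}[\beta_{1,0}(\tau,\tilde b_0)^{-n}]$ for $n<\tau$, one evaluates the first factor by \eqref{negmom} with $\hat a_1=(\tau)$ — under $\mathcal{S}_2$ the linear and constant parts of $\log\Gamma_1(\cdot\,|\,\tau)$ drop out, leaving an alternating product of four $\Gamma(\cdot/\tau)$'s — and the second by \eqref{gamma1}, getting $\tau^{-n/\tau}\Gamma((\tilde b_0-n)/\tau)/\Gamma(\tilde b_0/\tau)$; the factors $\Gamma\bigl(1+\lambda_1+\lambda_2-(l-1)/\tau\bigr)$ coming from the four‑index term combine with the $\beta_{1,0}$ contribution into $\prod_{j=0}^{n-1}\Gamma(1+\lambda_1+\lambda_2-j/\tau)$, the powers of $\tau$ cancel, and the result is exactly the right‑hand side of \eqref{morris2}. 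The negative moments ${\bf E}[M^{-n}]=c^{-n}\,{\bf E}[\beta_{2,2}(\tau,b)^{n}]\,{\bf E}[\beta_{1,0}(\tau,\tilde b_0)^{n}]$ are obtained the same way, now using \eqref{posmom} for $\beta_{2,2}$ and ${\bf E}[\beta_{1,0}(\tau,\tilde b_0)^n]=\tau^{n/\tau}\Gamma((\tilde b_0+n)/\tau)/\Gamma(\tilde b_0/\tau)$, yielding the stated formula. Determinacy of the Stieltjes problem for $M^{-1}=c^{-1}\beta_{2,2}(\tau,b)\,\beta_{1,0}(\tau,\tilde b_0)$ follows from Carleman's criterion: since $\beta_{2,2}(\tau,b)\le 1$ a.s.\ and ${\bf E}[\beta_{1,0}(\tau,\tilde b_0)^n]=\exp(\tfrac1\tau n\log n+O(n))$ by Stirling, one has ${\bf E}[M^{-n}]\le C^{\,n}(n!)^{1/\tau}$ with $1/\tau<2$, so $\sum_n {\bf E}[M^{-n}]^{-1/(2n)}=\infty$.

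Only the involution identity \eqref{invcircle} requires genuine computation. Using the central identity on both sides, cancelling the $(2\pi)$‑powers, and simplifying the $\eta_{1,0}$‑factors with $\tau\tilde b_0'=\tilde b_0$ (where $\tilde b_0'=\tfrac1\tau(1+\tau\lambda_1+\tau\lambda_2)+1$), one reduces \eqref{invcircle} to
\[
\frac{\eta_{2,2}(-q/\tau\,|\,1/\tau,\,b')}{\eta_{2,2}(-q\,|\,\tau,\,b)}=\frac{\Gamma\!\bigl((\tilde b_0-q)/\tau\bigr)\,\Gamma(1-q)\,\Gamma(\tilde b_0)}{\Gamma(\tilde b_0/\tau)\,\Gamma(1-q/\tau)\,\Gamma(\tilde b_0-q)},\qquad b'=(\tfrac1\tau,\,1+\lambda_1,\,1+\lambda_2).
\]
By the scaling invariance of Barnes betas (Theorem \ref{barnesbetascaling} with $\kappa=1/\tau$) the numerator on the left equals $\eta_{2,2}(-q\,|\,\tau,\,\tau b')$ with $\tau b'=(1,\,\tau(1+\lambda_1),\,\tau(1+\lambda_2))$; expanding both $\eta_{2,2}$'s in product form, the two factors $\Gamma_2(\tau(1+\lambda_i)+1-q\,|\,\tau)/\Gamma_2(\tau(1+\lambda_i)+1\,|\,\tau)$, $i=1,2$, are common to numerator and denominator and cancel, leaving a ratio of eight double gammas with arguments in $\{1,\tau,\tilde b_0+\tau,\tilde b_0+1\}$, shifted by $0$ and by $-q$. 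Two uses of \eqref{feq} — $i=1$ to remove shifts by $1$ and $i=2$ to remove shifts by $\tau$ — collapse all of these to $\Gamma_1$'s, hence to ordinary gammas via \eqref{gamma1}; the only leftover not disposed of by telescoping is $\Gamma_2(\tau\,|\,\tau)/\Gamma_2(1\,|\,\tau)$, which equals $\tau^{1/2}$ since the identity $\Gamma_2(1-q\,|\,\tau)/\Gamma_2(\tau-q\,|\,\tau)=\Gamma_1(-q\,|\,1)/\Gamma_1(-q\,|\,\tau)$ (two instances of \eqref{feq}) tends to $\tau^{-1/2}$ as $q\to 0$. Collecting the powers of $\tau$ — which cancel — and applying $\Gamma(-x)=-\Gamma(1-x)/x$ to the factors at $-q$ and $-q/\tau$ produces the asserted right‑hand side. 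Finally the case $\lambda_1=\lambda_2=0$ is immediate: then $b=(\tau,1,1)$, $\tilde b_0=\tau+1$, and repeated use of \eqref{feq} collapses $\eta_{2,2}(-q\,|\,\tau,(\tau,1,1))\,\eta_{1,0}(-q\,|\,\tau,\tau+1)$ to $\eta_{1,0}(-q\,|\,\tau,\tau)=\tau^{-q/\tau}\Gamma(1-q/\tau)$, whence $\mathfrak{M}(q\,|\,\tau,0,0)=(2\pi)^q\Gamma(1-q/\tau)/\Gamma^q(1-1/\tau)$ and $M_{(\tau,0,0)}=c\,\beta_{1,0}^{-1}(\tau,\tau)$. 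The main obstacle throughout is not conceptual — the functional equation and scaling property of $\Gamma_M$, the explicit form \eqref{gamma1}, the existence theorems for $\beta_{2,2}$ and $\beta_{1,0}$, and Carleman's criterion are all in hand — but rather the bookkeeping of the many double‑gamma arguments and the telescoping cancellations, which is most delicate in the involution step.
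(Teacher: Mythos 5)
Your proposal is correct, and its core coincides with the paper's own proof: both establish \eqref{thedecompcircle} by applying the functional equation \eqref{feq} with $a_1=1$ exactly once to the four-index factor $\Gamma_2(-q+b_0+b_1+b_2\,|\,\tau)/\Gamma_2(b_0+b_1+b_2\,|\,\tau)$ of the $\beta_{2,2}^{-1}$ Mellin transform, the displaced $\Gamma_1$-ratio being precisely the $\beta_{1,0}^{-1}$ factor, and both obtain the Morris product at $q=n$ and the negative moments by iterating \eqref{feq} (the paper packages the iteration as \eqref{repeated}; your route through \eqref{posmom}--\eqref{negmom} and \eqref{gamma1} is the same computation). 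You deviate in two secondary places. For the determinacy of $M^{-1}_{(\tau,\lambda_1,\lambda_2)}$ you bound ${\bf E}[M^{-n}]\le C^n (n!)^{1/\tau}$ using $\beta_{2,2}\le 1$ and Stirling and then invoke Carleman's criterion, whereas the paper appeals to Theorem \ref{newetaasympt} and condition \eqref{condition} ($1\le 2\tau$); your version is self-contained and needs only sufficiency, the paper's leans on the sharp iff of \cite{Stoyanov}. For the involution invariance you channel the scaling through Theorem \ref{barnesbetascaling} and then telescope with \eqref{feq}, while the paper applies \eqref{scale} to each double-gamma factor and collects the powers of $\tau$ via the explicit polynomial \eqref{B22}; these are equivalent (Theorem \ref{barnesbetascaling} is itself \eqref{scale} together with the vanishing of $\mathcal{S}_2$ on $B_{2,2}$), and you trade the Bernoulli-polynomial bookkeeping for gamma-function telescoping, ending at the same closing identity
\begin{equation*}
\tau^{-\frac{q}{\tau}}\,\frac{\Gamma_2(1-q\,|\,\tau)\,\Gamma_2(\tau\,|\,\tau)}{\Gamma_2(1\,|\,\tau)\,\Gamma_2(\tau-q\,|\,\tau)}=\frac{\Gamma(1-q)}{\Gamma(1-\frac{q}{\tau})},
\end{equation*}
your evaluation $\Gamma_2(\tau\,|\,\tau)/\Gamma_2(1\,|\,\tau)=\tau^{1/2}$ via the $q\to 0$ limit being a correct way to fix the residual constant.
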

\begin{remark}
The special case of $\lambda_1=\lambda_2=0$ was first treated in \cite{FyoBou} and corresponds to
the Dyson integral. The involution invariance in this case, in its equivalent self-duality form, see \eqref{selfdual} below, was first discovered
in \cite{FLDR}.
\end{remark}
The proofs are deferred to Section 6.

\section{Maximum Distribution of the Gaussian Free Field on the Unit Interval and Circle}
\noindent
In this section we will formulate precise conjectures about the distribution of the maximum of the discrete 2D gaussian
free field (GFF) on the unit interval and circle with a non-random logarithmic potential.  We will not attempt to review the GFF here but rather refer the reader to
\cite{FyoBou} for the circle case and to \cite{FLDR} and Section 3 of \cite{Menon} for the interval case. Suffice it to say that our approach to the 
GFF construction is essentially based on the construction of \cite{MRW}, \cite{BM1}, \cite{BM}.

\begin{definition}[GFF on the interval]
Let $V_\varepsilon(x)$ be a centered gaussian process on $[0, \,1]$ with the covariance
\begin{align}\label{covr}
{\bf{Cov}}\left[V_{\varepsilon}(u), \,V_{\varepsilon}
(v)\right] &  =
\begin{cases}
- \, 2\log |u-v|, \, \varepsilon\leq|u-v|\leq 1, \\
2\left(1-\log \varepsilon -\frac{|u-v|}{\varepsilon}\right),\, |u-v|\leq \varepsilon.
\end{cases}
\end{align}
The limit $\lim\limits_{\varepsilon\rightarrow 0} V_{\varepsilon}(x)$ is
called the continuous GFF on the interval $x\in[0, \,1]$ and its discretized version $V_\varepsilon(x_i),$  
$x_i=i\varepsilon,$ $i=0\cdots N,$ $\varepsilon=1/N$ is the discrete GFF on the interval. 
\end{definition}
We note that in applications, see \cite{Menon} for example, the GFF construction arises is a slightly more general form of
\begin{align}
%%{\bf{E}} \left[V_{\varepsilon}(u)\right] & =  -\frac{\mu}{2} \,
%%\left(\kappa-\log\varepsilon\right), \label{meank} \\
{\bf{Cov}}\left[V_{\varepsilon}(u), \,V_{\varepsilon}
(v)\right]  = & 
\begin{cases}\label{covk}
 -
2 \, \log|u-v|, \, \varepsilon\ll|u-v|\leq 1,  \\
 2
\left(\kappa-\log\varepsilon\right),\, u=v,
\end{cases} \nonumber \\
& + O(\varepsilon),
\end{align}
where $\kappa\geq0$ is some fixed constant and the details of regularization are relegated to the $O(\varepsilon)$ term.
It is worth emphasizing that the choice of covariance regularization for $|u-v|\leq \varepsilon$ has no effect on
distribution of the maximum, so long as the variance behaves as in \eqref{covk},  due to Theorem 6 in \cite{BM1} as explained below.
%%  the law of the total mass  so that the  is not affected by this choice. 
The same remark applies to the GFF on the circle so we give its definition in the more general form.
\begin{definition}[GFF on the circle]
Let $V_\varepsilon(\psi)$ be a centered gaussian process with the covariance
\begin{align}
%%{\bf{E}} \left[\omega_{\mu, \varepsilon}(\psi)\right] & =  -\frac{\mu}{2} \,
%%\left(\kappa-\log\varepsilon\right), \label{meank} \\
{\bf{Cov}}\left[V_{\varepsilon}(\psi), \,V_{\varepsilon}
(\xi)\right]  = &
\begin{cases}\label{covkc}
 -
2 \, \log|e^{i\psi}-e^{i\xi}|, \,  |\xi-\psi|\gg \varepsilon,  \\
 2
\left(\kappa-\log\varepsilon\right), \psi=\xi,
\end{cases} \nonumber \\
& + O(\varepsilon),
\end{align}
where $\kappa\geq 0$ is some fixed constant. The limit $\lim\limits_{\varepsilon\rightarrow 0} V_{\varepsilon}(\psi)$ is
called the GFF on the circle $\psi\in[-\pi, \pi)$ and its discretized version $V_\varepsilon(\psi_j),$  
$\psi_j=j\varepsilon,$ $j=-N/2\cdots N/2,$ $\varepsilon=1/N$ is the discrete GFF on the circle. 
\end{definition}
The existence of such objects follows from the general theory of \cite{RajRos} as shown in \cite{BM1} and \cite{BM}. 
\begin{definition}[Problem formulation for the interval]
Let $\lambda_1,\,\lambda_2\geq 0.$  
Let 
\begin{equation}
N=1/\varepsilon
\end{equation}
so we can think of $V_\varepsilon(x)$ as being defined at $x_i=i\varepsilon,$ $i=0\cdots N.$
What is the distribution of 
\begin{equation}
V_N\triangleq\max\{V_{\varepsilon}(x_i)+\lambda_1\,\log(x_i)+\lambda_2\,\log(1-x_i),\,i=1\cdots N\}
\end{equation}
in the form of an asymptotic expansion in $N$ in the limit $N\rightarrow \infty?$
\end{definition}
%%The gaussian free field is a fascinating mathematical object that 
\begin{definition}[Problem formulation for the circle]
Let $\alpha\geq 0.$ Let
\begin{equation}
N=2\pi/\varepsilon
\end{equation}
and think of $V_\varepsilon(\psi)$ as being defined at $\psi_j=j\varepsilon,$ $j=-N/2\cdots N/2.$
What is the distribution of 
\begin{equation}
V_N\triangleq\max\{V_{\varepsilon}(\psi_j)+2\alpha\log|1+e^{i\psi_j}|,\,j=-N/2\cdots N/2\}
\end{equation}
in the form of an asymptotic expansion in $N$ in the limit $N\rightarrow \infty?$
\end{definition}

We will consider the case of the GFF on the interval first. Define the critical Selberg integral probability 
distribution in terms of  the Selberg integral distribution in Theorem \ref{BSM} by the formula
\begin{definition}[Critical Selberg Integral Distribution]
\begin{equation}\label{Selbergcrit}
M_{(\tau=1,\lambda_1,\lambda_2)}\triangleq \lim\limits_{\tau\downarrow 1} \frac{\Gamma\bigl(1-1/\tau\bigr)}{2\pi}\,M_{(\tau,\lambda_1,\lambda_2)}.
\end{equation}
\end{definition}
This limit exists by \eqref{Decomposition}. We will refer to $M_{(\tau=1,\lambda_1,\lambda_2)}$ as the critical Selberg integral distribution.
\begin{conjecture}[Maximum of the GFF on the Interval]\label{maxint}
The leading asymptotic term in the expansion of the Laplace transform of $V_N$ in $N$ is 
\begin{equation}
{\bf E}[e^{q\,V_N}] \approx e^{q(2\log N-(3/2)\log\log N+{\rm const})}\,{\bf E}\bigl[M^q_{(\tau=1,\lambda_1,\lambda_2)}
\bigr],\;N\rightarrow\infty.
\end{equation}
Probabilistically, let 
\begin{align}
X_1 &\triangleq \beta_{2,2}^{-1}\Bigl(\tau=1,
b_0=2+\lambda_1,\,b_1=(\lambda_2-\lambda_1)/2, \,
b_2=(\lambda_2-\lambda_1)/2\Bigr),\\
X_2 & \triangleq \beta_{2,2}^{-1}\Bigl(\tau=1,
b_0=2+(\lambda_1+\lambda_2)/2,\,b_1=1/2,\,b_2=1/2\Bigr),\\
X_3 & \triangleq \beta_{2,2}^{-1}\Bigl(\tau=1, b_0=2,\,
b_1=(2+\lambda_1+\lambda_2)/2, \,
b_2=(2+\lambda_1+\lambda_2)/2\Bigr),\\
Y & \triangleq \beta_{1,0}^{-1}\bigl(\tau=1, b_0=1\bigr)
\end{align}
be as in Theorem \ref{BSM} with $\tau=1.$  Then, as $N\rightarrow \infty,$
\begin{align}
V_N = & 2\log N-\frac{3}{2}\log\log N+{\rm const}+\mathcal{N}(0,\,4\log 2) + \log X_1+\log X_2+\log X_3+\nonumber \\ & +
\log Y+\log Y'+o(1),
\end{align}
where $Y'$ is an independent copy of $Y.$ %%$O(1)$ is non-random, and corrections are of order $o(1).$ 
\end{conjecture}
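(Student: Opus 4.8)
\noindent The plan is to supply the heuristic derivation promised in the introduction: combine the freezing scenario for the associated gaussian multiplicative chaos (GMC) with the conjectural identification of its total mass as a Selberg integral distribution, and then read off the decomposition of $V_N$ from \eqref{Decomposition} in the critical limit $\tau\downarrow 1.$ First, introduce the Gibbs partition function at inverse temperature $\beta,$
\[
Z_N(\beta)\triangleq\frac{1}{N}\sum_{i=1}^N e^{\beta\left(V_\varepsilon(x_i)+\lambda_1\log x_i+\lambda_2\log(1-x_i)\right)},\qquad\varepsilon=1/N.
\]
For $\beta<1$ (the subcritical phase) one expects $N^{-\beta^2}Z_N(\beta)$ to converge in law to the total mass $\mathcal{M}_\beta$ of the GMC measure on $[0,1]$ associated with the continuum field of covariance $-2\log|u-v|$ and singular density $x^{\beta\lambda_1}(1-x)^{\beta\lambda_2}.$ Its positive integer moments are then precisely the Selberg integral \eqref{Selberg} with $1/\tau=\beta^2$ and Selberg parameters $(\beta\lambda_1,\beta\lambda_2),$ so Theorem \ref{BSM} (together with moment determinacy in that range) gives $\mathcal{M}_\beta\overset{{\rm in\,law}}{=}M_{(1/\beta^2,\,\beta\lambda_1,\,\beta\lambda_2)}$ up to an $N$-independent constant depending only on the variance normalization, later absorbed into the drift. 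Letting $\beta\uparrow 1,$ equivalently $\tau=1/\beta^2\downarrow 1,$ and noting that the renormalizing factor in Definition \eqref{Selbergcrit} behaves like a constant times $(1-\beta)^{-1}$ — exactly the standard renormalization producing the derivative martingale — one identifies the critical GMC mass with $M_{(\tau=1,\lambda_1,\lambda_2)}.$

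Next, feed this into the freezing/extremal-process heuristic of \cite{FLDR}, \cite{FLD} (see also \cite{FyoBou}): the centered maximum should converge,
\[
V_N-a_N\ \longrightarrow\ \log M_{(\tau=1,\lambda_1,\lambda_2)}+G\quad\text{in law},
\]
where $a_N=2\log N-\tfrac32\log\log N+{\rm const}$ is the deterministic shift — its first two terms being the standard asymptotics of the maximum of a log-correlated field on the interval, now rigorous for this class through \cite{Ding} — and $G$ is an independent standard Gumbel coming from the Poisson-point-process structure of the extremal landscape, the decoration being trivial or absorbed into the constant. Since ${\bf E}[e^{qG}]=\Gamma(1-q)={\bf E}\bigl[\beta_{1,0}^{-1}(\tau=1,b_0=1)^q\bigr],$ one writes $G\overset{{\rm in\,law}}{=}\log Y'$ with $Y'$ an independent copy of the $Y$ in the statement.

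Finally, expand $\log M_{(\tau=1,\lambda_1,\lambda_2)}$ using the factorization \eqref{Decomposition} at $\tau=1$: after the normalization in \eqref{Selbergcrit}, the deterministic prefactor collapses to a constant and one is left with ${\rm const}+\mathcal{N}(0,4\log 2)+\log X_1+\log X_2+\log X_3+\log Y,$ with exactly the $X_i$ and $Y$ listed in the statement. Folding all $q$-independent constants into ``${\rm const}$'' produces the asserted decomposition of $V_N,$ and taking Laplace transforms (absorbing the Gumbel $\Gamma$-factor and the remaining constants into the drift) gives the displayed leading-order form of ${\bf E}[e^{qV_N}].$ One also notes, as a byproduct, that the self-duality \eqref{involutionint} of $\mathfrak{M}$ under $\tau\leftrightarrow 1/\tau$ is precisely the consistency condition making the freezing continuation through $\beta=1$ well defined; this is the structural ``origin'' of the role self-duality plays in \cite{FLDR}, \cite{FLD}.

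The main obstacle is that each probabilistic ingredient above is itself conjectural, which is why the statement is a conjecture rather than a theorem. The freezing scenario and the convergence of the extremal process (with trivial decoration) are not available in this generality; and, most seriously, the identification of the law of the subcritical GMC total mass with the Selberg integral distribution, hence of the derivative martingale with $M_{(\tau=1,\lambda_1,\lambda_2)},$ is exactly the open problem, with only partial results known (e.g. \cite{Webb} in the circle case). Rigorizing the derivation would therefore require (i) proving the Selberg-integral moment formula for the GMC mass together with control of its critical limit, and (ii) proving the freezing/point-process convergence with the stated drift; the additive constant and the Gumbel factor would then emerge automatically as above.
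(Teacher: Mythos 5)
Your derivation matches the paper's own heuristic in Section 5 in all essentials: the conjectural identification of the subcritical GMC total mass with the Selberg integral distribution, the freezing at $\beta=1$ justified by the self-duality \eqref{selfdual}, the extra $\Gamma(1-q)$ factor identified with an independent copy $Y'$ of $\beta_{1,0}^{-1}(1,1)$, and the final decomposition read off from \eqref{Decomposition}. The only cosmetic difference is that you obtain the Gumbel factor from the extremal point-process picture and the $-\tfrac32\log\log N$ shift directly from \cite{Ding}, whereas the paper extracts both from the double-exponential identity \eqref{generalidentity} and the $\beta$-dependence of the constant $C$ (via the $\Gamma(1-\beta^2)$ term); these are equivalent for the purposes of this heuristic.
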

%%This means that $V_N$ equals in law the sum of six independent random variables.
\begin{remark}
This conjecture at the level of the Mellin transform is due to \cite{FLDR} in the case of $\lambda_1=\lambda_2=0$ and \cite{FLD} for
general $\lambda_1$ and $\lambda_2.$ Our probabilistic re-formulation of their conjecture is new. We also note that 
the Mellin transform of the Barnes beta distribution $\beta_{2,2}$ with $a=(1,1)$ can be expressed in terms of the Barnes $G$ function, see
\cite{Genesis}, as follows.
\begin{align}
{\bf E}\bigl[\beta_{2,2}^q(1, b_0, b_1, b_2)\bigr] = & \frac{G(b_0)}{G(q+b_0)}
\frac{G(q+b_0+b_1)}{G(b_0+b_1)}
\frac{G(q+b_0+b_2)}{G(b_0+b_2)}\times \nonumber \\ & \times
\frac{G(b_0+b_1+b_2)}{G(q+b_0+b_1+b_2)}.
\end{align}
In the special case of $\lambda_1=\lambda_2=0,$ $X_1$ degenerates,  $X_3$ becomes Pareto, and 
\begin{equation}
X_2 = \beta_{2,2}^{-1}\bigl(1,
b_0=2,\,b_1=1/2,\,b_2=1/2\bigr),
\end{equation}
see Section 6 of \cite{Me14} for details.
\end{remark}

Similarly, to formulate our conjecture for the maximum of the GFF on the circle, we need to define the critical Morris integral
probability distribution. 
\begin{definition}[Critical Morris Integral Distribution]
\begin{equation}
M_{(\tau=1,\lambda_1,\lambda_2)}\triangleq \lim\limits_{\tau\downarrow 1} \frac{\Gamma\bigl(1-1/\tau\bigr)}{2\pi}\,M_{(\tau,\lambda_1,\lambda_2)},
\end{equation}
where $M_{(\tau,\lambda_1,\lambda_2)}$ is as in Theorem \ref{theoremcircle}.
\end{definition}
This limit exists by \eqref{thedecompcircle}. We will refer to $M_{(\tau=1,\lambda_1,\lambda_2)}$ as the critical Morris integral distribution.
\begin{conjecture}[Maximum of the GFF on the Circle]\label{maxintcircle}
The leading asymptotic term in the expansion of the Laplace transform of $V_N$ in $N$ is 
\begin{equation}
{\bf E}[e^{q\,V_N}] \approx e^{q(2\log N-(3/2)\log\log N+{\rm const})}\,{\bf E}\bigl[M^q_{(\tau=1,\alpha,\alpha)}
\bigr],\;N\rightarrow\infty.
\end{equation}
Probabilistically, let 
\begin{align}
X & \triangleq \beta_{2,2}^{-1}\bigl(\tau=1,
b_0=1,\,b_1=1+\alpha,\,b_2=1+\alpha\bigr),\\
Y & \triangleq \beta_{1,0}^{-1}(\tau=1, b_0=2\alpha+2), \\
Y' & \triangleq \beta_{1,0}^{-1}\bigl(\tau=1, b_0=1\bigr)
\end{align}
be as in Theorem \ref{theoremcircle} with $\tau=1.$  Then,
\begin{equation}
V_N = 2\log N-\frac{3}{2}\log\log N+{\rm const}+\log X+ \log Y+\log Y'+o(1).
\end{equation}
%%where $Y'$ is an independent copy of $Y.$ 
If $\alpha=0,$ 
\begin{equation}
V_N = 2\log N-\frac{3}{2}\log\log N+{\rm const}+ \log Y+\log Y'+o(1),
\end{equation}
where 
\begin{equation}
Y\overset{{\rm in \,law}}{=}Y'=\beta_{1,0}^{-1}\bigl(\tau=1, b_0=1\bigr).
\end{equation}
%%In both cases $O(1)$ is non-random and corrections are of order $o(1).$ 
\end{conjecture}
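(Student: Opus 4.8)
Since Conjecture \ref{maxintcircle} predicts the form of a limit law rather than an identity that can be attacked head-on, the plan is to give a heuristic derivation in the freezing framework of \cite{FyoBou}, \cite{FLDR}, \cite{FLD}. The only part that is genuinely a theorem is that the right-hand side defines a bona fide probability distribution, and this is already guaranteed by Theorem \ref{theoremcircle} together with the Barnes beta theory of Section 2. First I would introduce, for inverse temperature $\beta>0,$ the partition function
\begin{equation}
Z_N(\beta)=\sum_{j=-N/2}^{N/2}e^{\beta V_\varepsilon(\psi_j)}\,|1+e^{i\psi_j}|^{2\alpha\beta},
\end{equation}
so that $V_N=\lim_{\beta\to\infty}\beta^{-1}\log Z_N(\beta)$ because the maximizing term dominates the sum. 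Setting $\tau=1/\beta^2,$ a replica computation shows that the $n$th moment of $N^{-(1+\beta^2)}Z_N(\beta)$ is, for $n<\tau,$ asymptotic to a constant times the Morris integral \eqref{morris2} with $\lambda_1=\lambda_2=\alpha\beta.$ In the high-temperature phase $\beta<\beta_c=1$ (i.e. $\tau>1$) I would then invoke the conjectural closed form of the Mellin transform of the total mass of the associated gaussian multiplicative chaos measure on the circle---of which only partial results are available, see \cite{Webb}---to identify the weak limit of $N^{-(1+\beta^2)}Z_N(\beta),$ after removal of the deterministic prefactor, with the Morris integral distribution $M_{(\tau,\alpha\beta,\alpha\beta)}$ of Theorem \ref{theoremcircle}.

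Next I would invoke the freezing scenario: for $\beta\ge\beta_c$ the law of the fluctuating part of $\beta^{-1}\log Z_N(\beta),$ that is, what remains after subtracting the deterministic term $2\log N-\tfrac32\log\log N+{\rm const},$ is independent of $\beta,$ so its $\beta\to\infty$ limit---the fluctuating part of $V_N$---coincides with its $\beta\downarrow\beta_c$ limit. The involution invariance \eqref{invcircle} of $\mathfrak{M}$ under $\tau\mapsto1/\tau,\;q\mapsto q/\tau,\;\lambda_i\mapsto\tau\lambda_i$ is precisely the self-duality that makes this continuation self-consistent; it is the analytic shadow of the $\beta\leftrightarrow\beta_c^2/\beta$ symmetry of the freezing ansatz, and exhibiting this link is one of the stated aims of the paper. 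Taking $\beta\downarrow1,$ equivalently $\tau\downarrow1$ and $\alpha\beta\downarrow\alpha,$ the normalization $\Gamma(1-1/\tau)/(2\pi)$ in the definition of the critical Morris distribution extracts $M_{(\tau=1,\alpha,\alpha)},$ while the Seneta--Heyde normalization forced by the criticality of the chaos (equivalently, the passage to the derivative martingale) is responsible for the logarithmic corrections that add up to the universal Bramson-type term $-\tfrac32\log\log N$ and, through the Laplace--Mellin duality built into the freezing transform, for an additional independent factor of law $\beta_{1,0}^{-1}(\tau=1,b_0=1),$ the reciprocal of a unit-mean exponential, whose logarithm is a standard Gumbel variable. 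This is the origin of $\log Y',$ exactly as in the interval case, Conjecture \ref{maxint}.

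Finally I would assemble the pieces. Specializing the decomposition \eqref{thedecompcircle} to $\tau=1,\;\lambda_1=\lambda_2=\alpha,$ the prefactor $2\pi\tau^{1/\tau}/\Gamma(1-1/\tau)$ being cancelled by the critical normalization, gives
\begin{equation}
M_{(\tau=1,\alpha,\alpha)}\overset{{\rm in \,law}}{=}\beta_{2,2}^{-1}\bigl(1,b_0=1,b_1=1+\alpha,b_2=1+\alpha\bigr)\,\beta_{1,0}^{-1}\bigl(1,b_0=2\alpha+2\bigr)=X\,Y,
\end{equation}
and multiplying by the independent Gumbel factor $Y'$ produces $V_N=2\log N-\tfrac32\log\log N+{\rm const}+\log X+\log Y+\log Y'+o(1),$ as claimed. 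The case $\alpha=0$ then follows from the elementary Mellin-transform identities ${\bf E}[\beta_{2,2}(1,1,1,1)^{-q}]=(1-q)^{-1}$ and $(1-q)^{-1}\Gamma(2-q)/\Gamma(2)=\Gamma(1-q),$ which give $X\,Y\overset{{\rm in \,law}}{=}\beta_{1,0}^{-1}(1,1),$ so that $Y$ and $Y'$ may both be taken to be independent copies of $\beta_{1,0}^{-1}(1,1).$ The hard part---indeed the only obstacle to promoting this outline to a theorem---lies entirely outside the Barnes beta machinery: it is the still unproven freezing scenario for multiplicative chaos on the circle, together with the conjectured closed form of the Mellin transform of the total chaos mass (equivalently, the law of the derivative martingale), for which only partial results in \cite{Webb} are available.
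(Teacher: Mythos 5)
Your outline is correct and follows essentially the same route as the paper's own heuristic derivation at the end of Section~5: the exponential functional \eqref{Zdefc}, its identification with the Morris integral distribution via Conjecture~\ref{ourmainconjcircle} and \eqref{keyapproxc}, freezing justified by the self-duality \eqref{invcircle}, the extra Gumbel factor $Y'$ coming from the $\Gamma(1-q)$ in the Laplace--Mellin identity \eqref{generalidentity}, and the final assembly from \eqref{thedecompcircle} at $\tau=1$ (including the correct $\alpha=0$ degeneration $XY\overset{{\rm in\,law}}{=}\beta_{1,0}^{-1}(1,1)$). You also correctly identify that the statement is a conjecture whose only provable content is the existence of the limiting distribution (Theorem~\ref{theoremcircle}), with the genuine obstacles being the freezing scenario and the law of the total chaos mass, exactly as the paper says.
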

\begin{remark}
This conjecture is due to \cite{FyoBou} in the case of $\alpha=0.$ The extension to general $\alpha$ is new.
\end{remark}
In the rest of this section we will give a heuristic derivation of our conjectures. 
Our approach is based on the freezing hypothesis and calculations of \cite{FyoBou} and \cite{FLDR} as well as our conjectures
relating the distribution of the exponential functional of the GFF on the interval and circle to 
the Selberg and Morris integral distributions, respectively. To this end, we need to briefly recall
the principal result of \cite{BM1} concerning the existence of the gaussian multiplicative chaos measure (also known
as the limit lognormal measure, see \cite{secondface}, \cite{Lan}), which we are stating here in slightly more general form.
\begin{theorem}[Multiplicative Chaos on the Interval]\label{multichaos}
%%Mandelbrot `72, Kahane `85, 
Let $0\leq \beta<1.$ The exponential functional of the GFF on the interval converges weakly a.s. as $\varepsilon\rightarrow 0$ to
a non-degenerate limit random measure
\begin{gather}
e^{-\beta^2(\kappa-\log\varepsilon)}\int_a^b e^{\beta V_{\varepsilon}(u)} \, du\longrightarrow M_{\beta}(a, b), \\
{\bf{E}}[M_{\beta}(a, b)]=|b-a|.
\end{gather}
 %%(non-degenerate), $M_{\mu}(t,t+\tau) \overset{{\rm in \,law}}{=} M_{\mu}(0,\tau)$ (stationary)
The moments of the (generalized) total mass of the limit measure are given by the Selberg integral: let $n<1/\beta^2,$ %%$I\subset [0, \,1],$
\begin{equation}
{\bf{E}} \Bigl[\Bigl(\int_0^1 s^{\lambda_1}(1-s)^{\lambda_2} \,
M_\beta(ds)\Bigr)^n\Bigr] = \int\limits_{[0,\,1]^n} \prod_{i=1}^n
s_i^{\lambda_1}(1-s_i)^{\lambda_2} \prod_{i<j}^n
|s_i-s_j|^{-2\beta^2} ds_1\cdots ds_n.
\end{equation}
\end{theorem}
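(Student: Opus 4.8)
The plan is to reduce everything to the known construction of Bacry–Muzy–type limit lognormal measures and then identify the moments via a scaling/self-similarity argument. First I would establish convergence and non-degeneracy: the covariance in \eqref{covk} is, up to the $O(\varepsilon)$ term, exactly of the form $-2\log|u-v|$ for $|u-v|\gg\varepsilon$ with variance $2(\kappa-\log\varepsilon)$ at coincident points, so the process $V_\varepsilon$ falls under the hypotheses of the construction in \cite{BM1} (Theorem 6 there), which gives weak a.s.\ convergence of the normalized exponential functional $e^{-\beta^2(\kappa-\log\varepsilon)}\int_a^b e^{\beta V_\varepsilon(u)}\,du$ to a non-degenerate limit $M_\beta(a,b)$ for $\beta^2<1$, together with the $L^1$ normalization ${\bf E}[M_\beta(a,b)]=|b-a|$. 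The point worth spelling out is that the precise short-distance regularization (linear cutoff in \eqref{covr}, or any other choice consistent with \eqref{covk}) is irrelevant: two regularizations differing only in the $O(\varepsilon)$ regime and agreeing on the variance produce the same limit measure, since the ratio of the two exponential functionals converges to $1$ in probability — this is the content of the cited Theorem 6 and is the one genuinely technical input I would invoke rather than reprove.

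Next I would address the moment identity. The standard route is to compute ${\bf E}\big[(\int_0^1 s^{\lambda_1}(1-s)^{\lambda_2}M_\beta(ds))^n\big]$ for integer $n<1/\beta^2$ by expanding the $n$th power as an $n$-fold integral, interchanging expectation and integration (justified by positivity and the finiteness of the $n$th moment, which holds precisely when $n<1/\beta^2$), and evaluating
\[
{\bf E}\Big[\prod_{i=1}^n e^{\beta V_\varepsilon(s_i)}\Big]
= \exp\Big(\tfrac{\beta^2}{2}\sum_{i,j}{\bf Cov}[V_\varepsilon(s_i),V_\varepsilon(s_j)]\Big).
\]
With the normalization constant $e^{-\beta^2(\kappa-\log\varepsilon)}$ raised to the $n$th power cancelling the $n$ diagonal terms, the off-diagonal part produces $\exp(-\beta^2\sum_{i<j}2\log|s_i-s_j|)=\prod_{i<j}|s_i-s_j|^{-2\beta^2}$ in the limit $\varepsilon\to 0$, while the potential weights $\prod_i s_i^{\lambda_1}(1-s_i)^{\lambda_2}$ come along for free from the generalized mass. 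Sending $\varepsilon\to 0$ under the integral sign (dominated convergence, using that the Selberg integrand is integrable exactly for $n\beta^2<1$) yields the claimed Selberg-integral expression. I would also note that a.s.\ convergence plus uniform integrability of the $n$th powers (again valid below the critical moment) upgrades the convergence of moments to the limit measure.

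The main obstacle is the interchange-of-limits step: justifying that the $\varepsilon\to 0$ limit commutes with the $n$-fold spatial integration and with the expectation simultaneously, uniformly down to the edge singularities at $s_i\in\{0,1\}$ and the collision singularities $s_i=s_j$. Near $n\beta^2=1$ the Selberg integrand is only barely integrable, so a crude bound does not suffice; the clean way is to cite the uniform-integrability estimates already in \cite{BM1} (or \cite{MeIMRN}) for the total mass of sub-critical chaos, which control ${\bf E}[M_\beta(a,b)^n]$ uniformly in $\varepsilon$ for $n<1/\beta^2$, and then invoke a standard Vitali-type convergence theorem. Everything else — the Gaussian computation, the cancellation of the normalization, the appearance of the $\lambda_i$ weights — is routine. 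A final remark I would add is that the case $\lambda_1=\lambda_2=0$ recovers the plain total mass, and matching $\beta^2=1/\tau$ lines up \eqref{Selberg} with \eqref{Selbergmomentsverified}, which is the bridge to Theorem \ref{BSM} used in the heuristic derivation of Conjecture \ref{maxint}.
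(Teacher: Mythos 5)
The paper does not actually prove Theorem \ref{multichaos}: it is explicitly introduced as a recollection of ``the principal result of \cite{BM1}'' stated in slightly more general form, and Section 6 contains no proof of it. Your sketch is a sound reconstruction of the standard argument from exactly the sources the paper leans on --- existence, non-degeneracy, and regularization-independence via \cite{BM1} (including Theorem 6 there, which the paper itself invokes in a footnote for the same purpose), and the Selberg moment formula via the routine Gaussian computation with Vitali/uniform-integrability control below the critical moment $n<1/\beta^2$ --- so there is nothing in the paper to contrast it with, and no gap to report.
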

We constructed a probability distribution having the required moments, see Theorem \ref{BSM}, which leads us to
the following conjecture, originally due to \cite{Me4} for $\lambda_1=\lambda_2=0$ and due to \cite{FLDR} (at the level of the
Mellin transform, see Remark \ref{myremark} at the end of Section 2) and \cite{MeIMRN} in general.
\begin{conjecture}[Law of Total Mass]\label{ourmainconj}
Let $M_{(\tau,\lambda_1,\lambda_2)}$ be the Selberg integral distribution in \eqref{Mint}. 
%%I conjecture
\begin{equation}
M_{(\tau,\lambda_1,\lambda_2)} \overset{{\rm in \,law}}{=}  \int_0^1 s^{\lambda_1} (1-s)^{\lambda_2} 
\,M_\beta(ds),\; \tau=1/\beta^2>1.
\end{equation}
\end{conjecture}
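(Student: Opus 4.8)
The plan is to treat Conjecture \ref{ourmainconj} as a problem of \emph{identifying a law} that is bootstrapped from moment data together with the stochastic self-similarity of multiplicative chaos; I do not expect a fully rigorous proof to be within reach of current methods, and will indicate where it breaks down. Write $T_\beta\triangleq\int_0^1 s^{\lambda_1}(1-s)^{\lambda_2}\,M_\beta(ds)$ and recall $\tau=1/\beta^2>1$. By Theorem \ref{multichaos}, for every integer $n<1/\beta^2$ the moment ${\bf E}[T_\beta^n]$ equals the $n$-dimensional Selberg integral; by Theorem \ref{BSM}, and specifically by \eqref{Selbergmomentsverified}, the Selberg integral distribution $M_{(\tau,\lambda_1,\lambda_2)}$ has exactly these same positive integer moments in the same range. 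Thus $T_\beta$ and $M_{(\tau,\lambda_1,\lambda_2)}$ share their entire finite moment sequence. This is not enough: in the subcritical regime $T_\beta$ has only finitely many finite moments and $\log T_\beta$ is expected to carry a Gaussian-type left tail, so $T_\beta$ is generically moment-indeterminate and Carleman-type criteria do not apply. The genuine content of the conjecture is therefore the identification of the law, which must be obtained by another route.

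The route I would take is to exploit the stochastic scale invariance of Gaussian multiplicative chaos --- in the spirit of the Bacry--Muzy star equation and the Mandelbrot-cascade fixed-point equation --- to derive a distributional functional equation for $T_\beta$. Splitting $[0,1]=[0,\tfrac12]\cup[\tfrac12,1]$ and decomposing the log-correlated field on $[0,1]$ into a coarse Gaussian component (responsible for a multiplicative $2^{-1/\tau}$-type factor and the $\mathcal{N}(0,4\log 2/\tau)$ piece visible in \eqref{Decomposition}) plus two rescaled, suitably coupled copies of the field, one obtains a self-referential identity in law for $T_\beta$ involving the boundary weights $s^{\lambda_1}(1-s)^{\lambda_2}$ and an independent power of the exponential. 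On the Barnes-beta side, using the factorizations of Theorem \ref{FunctEquat}, the scaling invariance of Theorem \ref{barnesbetascaling}, and the explicit product decomposition \eqref{Decomposition} of $M_{(\tau,\lambda_1,\lambda_2)}$ into $L$, $X_1X_2X_3$ and $Y$, I would verify that the Mellin transform $\mathfrak{M}(q\,|\,\tau,\lambda_1,\lambda_2)$ solves the very same functional equation and has the correct holomorphy and growth in the strip $\Re(q)<\tau$. A uniqueness statement for that fixed-point equation within the class of laws with the prescribed power-law right tail and Gaussian-like left tail would then close the identification.

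The hard part --- and the reason this remains a conjecture rather than a theorem --- is this last uniqueness input, compounded by the fact that the self-similarity is not exact on $[0,1]$: the covariance $-2\log|u-v|$ is only asymptotically scale invariant near the diagonal and is deformed by the endpoints, so the discrepancy between $T_\beta$ and its would-be self-similar surrogate has to be controlled (as in the weak-convergence arguments behind Theorem \ref{multichaos}). Uniqueness of the law of the total mass of GMC, equivalently uniqueness of solutions of the relevant fixed-point equation, is itself largely open; cf.\ the Introduction's remarks on the freezing scenario, on \cite{Webb}, and on \cite{Mad}, \cite{Madmax}. A realistic intermediate target is to show that $\mathfrak{M}(\cdot\,|\,\tau,\lambda_1,\lambda_2)$ is the \emph{unique} candidate Mellin transform that simultaneously interpolates \eqref{Selbergmomentsverified}, is holomorphic and of minimal exponential type in $\Re(q)<\tau$, and is compatible with the self-similarity relation; proving that the actual chaos total mass $T_\beta$ realizes this Mellin transform is the step I expect to require genuinely new tools.
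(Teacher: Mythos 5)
The statement you were given is a \emph{conjecture}: the paper offers no proof of it, and its entire justification in the text is the moment identification you give in your first paragraph --- Theorem \ref{multichaos} identifies the positive integer moments of $\int_0^1 s^{\lambda_1}(1-s)^{\lambda_2}M_\beta(ds)$ with Selberg integrals for $n<1/\beta^2$, and Theorem \ref{BSM} (via \eqref{Selbergmomentsverified}) shows that $M_{(\tau,\lambda_1,\lambda_2)}$ has exactly the same moments --- supplemented by consistency checks such as the involution invariance \eqref{involutionint} and its role in the freezing heuristics of Section 5. You reproduce this correctly, and, crucially, you correctly point out that a finite matching moment sequence cannot determine the law here, which is precisely why the statement remains a conjecture rather than a theorem. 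In that sense your first paragraph coincides with everything the paper actually establishes.

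Your second and third paragraphs go beyond the paper: the proposed route through a star-equation/fixed-point identity for the total mass, a matching functional equation for $\mathfrak{M}(q\,|\,\tau,\lambda_1,\lambda_2)$ assembled from Theorems \ref{FunctEquat} and \ref{barnesbetascaling} and the decomposition \eqref{Decomposition}, and a closing uniqueness statement. This is a reasonable research program, and your diagnosis of where it fails --- exact stochastic self-similarity does not hold on $[0,1]$ because the covariance is deformed by the endpoints, and uniqueness of the law solving the fixed-point equation is open --- is accurate and consistent with the paper's own remarks on the freezing scenario and on \cite{Webb} and \cite{Mad}. Just be clear that none of this is carried out, so nothing in your proposal (or in the paper) constitutes a proof; what you have written is a correct account of the evidence for the conjecture and of the obstacles to proving it.
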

%%Given these preliminaries, we can state our result for the GFF on the interval. 
Now, 
following \cite{FLDR}, define the exponential functional
\begin{equation}\label{Zdef}
Z_{\lambda_1,\lambda_2,\varepsilon}(\beta) \triangleq \sum\limits_{i=1}^N  x_i^{\beta\lambda_1}(1-x_i)^{\beta\lambda_2} e^{\beta V_\varepsilon(x_i)}.
\end{equation}
Using the identity
\begin{equation}\label{key}
{\bf P} \bigl(V_N < s \bigr) = \lim\limits_{\beta\rightarrow \infty} {\bf E}\Bigl[
\exp\Bigl(-e^{-\beta s}\,Z_{\lambda_1,\lambda_2,\varepsilon}(\beta)/C\Bigr)\Bigr],
\end{equation}
which is applicable to any sequence of random variables and an arbitrary \emph{$\beta$-independent} constant $C,$
the distribution of the maximum is reduced to the Laplace transform of the exponential functional in the limit $\beta\rightarrow \infty.$
Now, by Theorem \ref{multichaos}, it is known that for $0<\beta<1$ the exponential functional converges\footnote{
Theorem 6 in \cite{BM1} shows that the laws of the total mass of the continuous and discrete multiplicative chaos measures are the same
provided  the $\varepsilon$ parameter in \eqref{covr} coincides with the discretization step.}  as 
$N\rightarrow \infty$ to the (generalized) total mass of the multiplicative chaos (limit lognormal) measure on the unit interval,
which is conjectured to be given by the Selberg integral distribution, resulting %%, see Conjecture \ref{ourmainconj}.
in the approximation\footnote{The validity of approximating the finite $N$ quantity with the $N\rightarrow\infty$
limit is discussed in \cite{FyoBou}.}
\begin{equation}\label{keyapprox}
Z_{\lambda_1,\lambda_2,\varepsilon}(\beta) \approx N^{1+\beta^2}\, e^{\beta^2\kappa}\,M_{(\tau,\beta\lambda_1,\beta\lambda_2)},\;N\rightarrow\infty,
\end{equation}
where
\begin{equation}
\tau = 1/\beta^2,\; 0<\beta<1.
\end{equation}
Next, we recall the involution invariance of the Mellin transform of the Selberg integral distribution, see \eqref{involutionint}.
%%As was first discovered in \cite{FLDR} and later verified in \cite{Me14} using this decomposition and the scaling invariance of Barnes beta distributions, 
%%see Theorem 4.2 in \cite{Me14}, the Mellin transform of $M_{\mu,\lambda_1,\lambda_2}$ satisfies a self-duality equation, \emph{i.e.}
%%is involution invariant under
%\begin{equation}
%\tau\rightarrow \frac{1}{\tau},\; q\rightarrow \frac{q}{\tau}, \; \lambda_i\rightarrow \tau\lambda_i.
%\end{equation}
Denoting the Mellin transform as in Theorem \ref{BSM},
\begin{equation}
\mathfrak{M}(q\,|\,\tau,\lambda_1,\lambda_2) = {\bf E}\bigl[M_{(\tau,\lambda_1,\lambda_2)}^q\bigr], \;\Re(q)<\tau,
\end{equation}
%%Then, see Corollary 5.7 in \cite{Me14},
we have
\begin{align}
\mathfrak{M}\bigl(\frac{q}{\tau}\,|\,\frac{1}{\tau},\tau\lambda_1,\tau\lambda_2\bigr) (2\pi)^{-\frac{q}{\tau}}\,\Gamma^{\frac{q}{\tau}}(1-\tau) \Gamma(1-\frac{q}{\tau}) = &
\mathfrak{M}(q\,|\,\tau,\lambda_1,\lambda_2) (2\pi)^{-q}\times \nonumber \\ & \times\Gamma^{q}(1-\frac{1}{\tau}) \Gamma(1-q). \label{involutionsec}
\end{align}
Hence, introducing the function
\begin{equation}\label{Func}
F(q\,|\,\beta, \lambda_1,\lambda_2) \triangleq \mathfrak{M}\bigl(\frac{q}{\beta}\,|\,\frac{1}{\beta^2},\beta\lambda_1,\beta\lambda_2\bigr) (2\pi)^{-\frac{q}{\beta}}\,\Gamma^{\frac{q}{\beta}}(1-\beta^2) \Gamma(1-\frac{q}{\beta}),
\end{equation}
one observes that it satisfies the identity
\begin{equation}\label{selfdual}
F(q\,|\,\beta, \lambda_1,\lambda_2) = F(q\,\big|\,\frac{1}{\beta}, \lambda_1,\lambda_2),
\end{equation}
first discovered in \cite{FLDR} in the special case of $\lambda_1=\lambda_2=0,$ then formulated in general in the form of \eqref{involutionsec} in
\cite{Me14}, and in \cite{FLD} in this form. 
On the other hand, as shown in \cite{FLDR}, one has the general identity
\begin{equation}\label{generalidentity}
\int_\mathbb{R} e^{yq} \frac{d}{dy}\exp\Bigl(-e^{-\beta y}X\Bigr)  dy = X^{\frac{q}{\beta}} \,\Gamma(1-\frac{q}{\beta}),\;\Re(q)<0, \,X>0.
\end{equation}
Letting 
\begin{equation}\label{X}
X = Z_{\lambda_1,\lambda_2,\varepsilon}(\beta) \frac{e^{\kappa}\,\Gamma(1-\beta^2)}{2\pi}
\end{equation}
one sees by means of \eqref{keyapprox} that for $0<\beta<1,$
\begin{equation}
{\bf E}\bigl[X^{\frac{q}{\beta}}\bigr]\approx e^{q\kappa(\beta+\frac{1}{\beta})} N^{q(\beta+\frac{1}{\beta})}\,\mathfrak{M}\bigl(\frac{q}{\beta}\,|\,\frac{1}{\beta^2},\beta\lambda_1,\beta\lambda_2\bigr) (2\pi)^{-\frac{q}{\beta}}\,\Gamma^{\frac{q}{\beta}}(1-\beta^2),\; N\rightarrow\infty,
\end{equation}
so that
\begin{equation}\label{rhs}
{\bf E}\bigl[X^{\frac{q}{\beta}}\bigr]\, \Gamma(1-\frac{q}{\beta}) \approx e^{q\kappa(\beta+\frac{1}{\beta})} N^{q(\beta+\frac{1}{\beta})}\,F(q\,|\,\beta, \lambda_1,\lambda_2),\;
N\rightarrow\infty.
\end{equation}
On the other hand, by letting the constant $C$ in \eqref{key} to be taken to be
\begin{equation}
C = \frac{2\pi}{e^{\kappa}\Gamma(1-\beta^2)},
\end{equation}
and combining \eqref{key} with \eqref{generalidentity} , one obtains
\begin{equation}
{\bf E}[e^{q V_N}] = \lim_{\beta\rightarrow \infty} \Bigl[{\bf E}\bigl[X^{\frac{q}{\beta}}\bigr] \Gamma(1-\frac{q}{\beta})\Bigr].
\end{equation}
The right-hand side of this equation has only been determined for $0<\beta<1,$ see \eqref{rhs}.
Due to the self-duality of the right-hand side, one assumes that it gets frozen at $\beta=1,$ as first formulated in \cite{FLDR}.
\begin{conjecture}[The Freezing Hypothesis]
Let $\beta>1.$
\begin{equation}
{\bf E}\bigl[X^{\frac{q}{\beta}}\bigr] \Gamma(1-\frac{q}{\beta}) = 
{\bf E}\bigl[X^{\frac{q}{\beta}}\bigr] \Gamma(1-\frac{q}{\beta})\Big|_{\beta=1}.
\end{equation}
\end{conjecture}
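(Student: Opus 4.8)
The plan is to argue that the right-hand side of \eqref{rhs}, regarded as a function of $\beta$, is invariant under the involution $\beta\mapsto 1/\beta$, so that $\beta=1$ is its fixed point, and then to invoke the freezing picture of spin-glass theory to conclude that this quantity stays constant --- ``frozen'' --- throughout the low-temperature phase $\beta\geq 1$, equal to its value at $\beta=1$. Since the statement is a conjecture, the goal of the ``proof'' is really a heuristic derivation together with an honest account of the obstruction to rigor.

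First I would isolate the two ingredients on the right-hand side of \eqref{rhs}: the deterministic prefactor $e^{q\kappa(\beta+\frac{1}{\beta})}N^{q(\beta+\frac{1}{\beta})}$ and the function $F(q\,|\,\beta,\lambda_1,\lambda_2)$ of \eqref{Func}. The prefactor is manifestly unchanged under $\beta\mapsto 1/\beta$ because $\beta+\frac{1}{\beta}$ is. For $F$, I would invoke the involution invariance of the Mellin transform of the Selberg integral distribution, \eqref{involutionsec} (equivalently \eqref{involutionint} in Theorem \ref{BSM}), which is precisely the self-duality \eqref{selfdual}, $F(q\,|\,\beta,\lambda_1,\lambda_2)=F(q\,|\,\frac{1}{\beta},\lambda_1,\lambda_2)$. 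Together these show that the whole right-hand side of \eqref{rhs} is an even function of $\log\beta$, hence a smooth, symmetric function of $\beta$ through the point $\beta=1$.

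Next I would appeal to the freezing scenario for logarithmically correlated fields, in the spirit of the random energy model and its logarithmically correlated generalizations, in which the relevant free-energy-type functional ceases to depend on $\beta$ once $\beta$ exceeds the critical value $\beta_c=1$. Granting this, and given that \eqref{rhs} has already been identified in closed form for $0<\beta<1$ and is even in $\log\beta$ (so that its value and its first $\beta$-derivative automatically agree at $\beta=1$), one posits that ${\bf E}\bigl[X^{q/\beta}\bigr]\Gamma(1-q/\beta)$ simply equals its $\beta=1$ value for all $\beta\geq 1$; letting $\beta\to\infty$ in \eqref{key}--\eqref{generalidentity} then yields ${\bf E}[e^{qV_N}]$ in terms of the critical ($\tau=1$) Selberg data, which is how the formulas in Conjecture \ref{maxint} are produced.

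The hard part --- and the reason this remains a conjecture --- is that neither step can presently be made rigorous. Two gaps stand in the way. First, the $N\to\infty$ asymptotics \eqref{keyapprox}, and hence \eqref{rhs}, have been justified only in the subcritical regime $0<\beta<1$, where Theorem \ref{multichaos} and Conjecture \ref{ourmainconj} are available; extending them to $\beta\geq 1$ requires control of critical and supercritical gaussian multiplicative chaos, where the governing object is the derivative martingale whose law is exactly what one is trying to pin down --- this I expect to be the principal obstacle. Second, even at the level of the limiting object, the freezing step is an assumption about continuation past a phase transition: the self-duality guarantees that $\beta=1$ is a symmetric point but does not on its own force $\beta$-independence for $\beta>1$. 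A rigorous proof would therefore require an independent freezing theorem for logarithmically correlated fields carrying the Selberg/Morris boundary data, along the lines of the progress reported in \cite{Mad} and surveyed in \cite{Madmax} and \cite{SubZei}.
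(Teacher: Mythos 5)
Your proposal matches the paper's own treatment: the statement is a conjecture, and the paper likewise motivates it only heuristically by observing that the right-hand side of \eqref{rhs} is invariant under $\beta\mapsto 1/\beta$ (via the self-duality \eqref{selfdual} of $F$ and the manifest invariance of the $\beta+\frac{1}{\beta}$ prefactor) and then assuming, following \cite{FLDR}, that the quantity freezes at the fixed point $\beta=1$. Your honest account of the two obstructions to rigor (the restriction of \eqref{keyapprox} to $0<\beta<1$ and the unproven freezing step itself) is consistent with the caveats the paper itself makes.
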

%%The formula for $X$ in \eqref{X} means that the constant $C$ in \eqref{key} is taken to be
One must note however that $C$ is not \emph{$\beta$-independent}. It is argued in \cite{YO} and \cite{FLDR2}  
that the $\Gamma(1-\beta^2)$ term 
shifts the maximum by  $-(3/2) \log \log N,$ see also \cite{Ding}. %%I think that the $\beta^2 \kappa$ term only
%%affects the shift of the maximum distribution as it is $N$-independent and is not singular at $\beta=1.$ 
Overall, we then obtain by \eqref{rhs},
%%\begin{equation}
%%\approx  \lim_{\beta\rightarrow \infty} \Bigl[e^{q\kappa(\beta+\frac{1}{\beta})} N^{q(\beta+\frac{1}{\beta})}\,F(q\,|\,\beta, \lambda_1,\lambda_2)\Bigr],\;N\rightarrow\infty.
%%\end{equation}
\begin{equation}
{\bf E}[e^{q\,V_N}] \approx e^{q(2\log N-(3/2)\log\log N+\,{\rm const})}\,F(q\,|\,\beta=1, \lambda_1,\lambda_2),\;
N\rightarrow\infty
\end{equation}
for some constant.\footnote{As remarked in \cite{FyodSimm}, this procedure only determines the distribution of the maximum up
to a constant term.}
Finally, recalling definitions of the critical Selberg integral distribution in \eqref{Selbergcrit} and of $F(q\,|\,\beta, \lambda_1,\lambda_2)$
in \eqref{Func}, and appropriately adjusting the constant, 
\begin{equation}
{\bf E}[e^{q\,V_N}] \approx e^{q(2\log N-(3/2)\log\log N+\,{\rm const})}\,{\bf E}\Bigl[M^q_{(\tau=1,\lambda_1,\lambda_2)}\Bigr]\Gamma(1-q),
\end{equation}
so that $Y'$ comes from the $\Gamma(1-q)$ factor and has the same law as $Y.$

The argument for the GFF on the circle goes through verbatim so we will only point out the key steps and omit redundant details.
Define the exponential functional 
\begin{equation}\label{Zdefc}
Z_{\alpha,\varepsilon}(\beta) = \sum\limits_{j=-N/2}^{N/2}  |1+e^{i\psi_j}|^{2\alpha\,\beta} e^{\beta V_\varepsilon(\psi_j)}.
\end{equation}
To describe its limit as $N\rightarrow \infty$ we need to compute the distribution of the generalized total mass of the limit
lognormal measure on the circle. The following result follows from the general theory of multiplicative chaos measures of \cite{K2}.
\begin{theorem}[Multiplicative Chaos on the Circle]\label{multichaoscircle}
%%Mandelbrot `72, Kahane `85, 
Let $0\leq \beta<1.$ The exponential functional of the GFF on the circle converges weakly a.s. as $\varepsilon\rightarrow 0$ to
a non-degenerate limit random measure
\begin{gather}
e^{-\beta^2(\kappa-\log\varepsilon)}\int_\phi^\psi e^{\beta V_{\varepsilon}(\theta)} \, d\theta\longrightarrow M_{\beta}(\phi, \psi), \\
{\bf{E}}[M_{\beta}(\phi, \psi)]=|\psi-\phi|.
\end{gather}
 %%(non-degenerate), $M_{\mu}(t,t+\tau) \overset{{\rm in \,law}}{=} M_{\mu}(0,\tau)$ (stationary)
The moments of the (generalized) total mass of the limit measure are given by the Morris integral: let $n<1/\beta^2,$ %%$I\subset [0, \,1],$
\begin{align}
{\bf{E}} \Bigl[\Bigl( \int_{-\pi}^{\pi} e^{i\psi \frac{\lambda_1-\lambda_2}{2}} \, |1+e^{i\psi}|^{\lambda_1+\lambda_2}\, dM_{\beta}(\psi)\Bigr)^n\Bigr] = & \int\limits_{[-\pi,\,\pi]^n} \prod\limits_{l=1}^n  e^{ i \theta_l\frac{\lambda_1-\lambda_2}{2}}  |1+e^{ i\theta_l}|^{\lambda_1+\lambda_2} \nonumber\\
& \times
\prod\limits_{k<l}^n |e^{ i \theta_k}-e^{ i\theta_l}|^{-2\beta^2} \,d\theta.
\end{align}
\end{theorem}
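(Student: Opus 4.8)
The plan is to follow the two-step route used for the interval in Theorem~\ref{multichaos}: first deduce the existence, non-degeneracy and mean of $M_\beta$ from the general theory of multiplicative chaos, and then compute the integer moments of the weighted total mass by a Gaussian calculation on the regularized field.

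For the first step I would observe that, by \eqref{covkc}, the covariance of $V_\varepsilon$ equals, up to the $O(\varepsilon)$ term, the truncation to scales $|\xi-\psi|\gg\varepsilon$ of the kernel $-2\log|e^{i\psi}-e^{i\xi}|=-2\log 2-2\log|\sin\tfrac{\psi-\xi}{2}|$, which is an exactly scale-invariant logarithmic kernel on the circle plus a bounded continuous correction, and that the prefactor $e^{-\beta^2(\kappa-\log\varepsilon)}$ coincides with the Wick renormalization $\exp(-\tfrac12\beta^2\,{\bf E}[V_\varepsilon(\theta)^2])$ up to a factor $e^{O(\varepsilon)}$. The general theory of \cite{K2} --- the circle analogue of \cite{BM1}, and in particular of Theorem~6 there, which guarantees that discretizing at step $\varepsilon$ produces the same limit as any other mollification at scale $\varepsilon$ --- then gives weak a.s.\ convergence of the renormalized exponential functional to a non-degenerate limit measure $M_\beta$ under the subcriticality condition $2\beta^2<2$, i.e.\ $0\le\beta<1$, together with convergence in $L^1$; and since ${\bf E}[e^{-\beta^2(\kappa-\log\varepsilon)}e^{\beta V_\varepsilon(\theta)}]=e^{O(\varepsilon)}$ uniformly in $\theta$, passing to the limit under the expectation gives ${\bf E}[M_\beta(\phi,\psi)]=|\psi-\phi|$.

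For the second step, fix $n<1/\beta^2$ and work first with the $\varepsilon$-regularized measure $M_{\beta,\varepsilon}$. Expanding the $n$th power of its weighted total mass as an $n$-fold integral over $[-\pi,\pi]^n$ and interchanging expectation and integration by Fubini --- legitimate because, in absolute value, the integrand is dominated uniformly in $\varepsilon$ by an integrable function: the weight has modulus $|1+e^{i\theta}|^{\lambda_1+\lambda_2}\le 2^{\lambda_1+\lambda_2}$ and $\prod_{k<l}|e^{i\theta_k}-e^{i\theta_l}|^{-2\beta^2}$ is integrable over $[-\pi,\pi]^n$ precisely when $n<1/\beta^2$, which is the convergence range of the Selberg and Morris integrals \eqref{Selberg}, \eqref{morris2} with $\tau=1/\beta^2$ --- reduces the moment to the integral over $[-\pi,\pi]^n$ of $e^{-n\beta^2(\kappa-\log\varepsilon)}\,{\bf E}[\prod_{l=1}^n e^{\beta V_\varepsilon(\theta_l)}]$, whose inner expectation equals $\exp(\tfrac{\beta^2}{2}\sum_{k,l=1}^n{\bf Cov}[V_\varepsilon(\theta_k),V_\varepsilon(\theta_l)])$. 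By \eqref{covkc} the $n$ diagonal terms contribute $e^{n\beta^2(\kappa-\log\varepsilon)}e^{O(\varepsilon)}$, which cancels the prefactor, while the off-diagonal terms contribute $\prod_{k<l}|e^{i\theta_k}-e^{i\theta_l}|^{-2\beta^2}e^{O(\varepsilon)}$. Sending $\varepsilon\to0$ with dominated convergence on $[-\pi,\pi]^n$ then shows that the $n$th moment of the weighted mass of $M_{\beta,\varepsilon}$ converges to the asserted $n$-fold integral, which is the Morris integral with exponent $-2/\tau=-2\beta^2$.

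Finally I would replace $M_{\beta,\varepsilon}$ by $M_\beta$ on the left-hand side of the identity: since $M_{\beta,\varepsilon}\to M_\beta$ weakly a.s.\ and $\psi\mapsto e^{i\psi(\lambda_1-\lambda_2)/2}|1+e^{i\psi}|^{\lambda_1+\lambda_2}$ is bounded and continuous, the weighted masses converge a.s., and convergence of their $n$th moments follows from uniform integrability --- the standard fact that the regularized weighted masses are bounded in $L^n$ uniformly in $\varepsilon$ and that $M_\beta$ has finite moments of every order below $1/\beta^2$. The one genuinely technical input, the a.s.\ weak and $L^1$ convergence of the regularized field invoked in the first step, is quoted from \cite{K2} rather than reproved here; granting it, the step I expect to require the most care is the uniform integrability up to the threshold $n<1/\beta^2$, precisely the value beyond which both sides of the claimed identity cease to be finite. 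With these ingredients the argument runs in complete parallel with the interval case of Theorem~\ref{multichaos}.
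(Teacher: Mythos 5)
Your proposal is correct and follows the same route as the paper, which offers no independent proof of this statement but simply asserts that it ``follows from the general theory of multiplicative chaos measures of \cite{K2}'': your first step is exactly that appeal to Kahane's theory (with the correct subcriticality threshold $2\beta^2<2$ and the correct identification of $e^{-\beta^2(\kappa-\log\varepsilon)}$ as the Wick normalization), and your second step is the standard Gaussian moment computation that the paper leaves implicit. The only point worth tightening is the last uniform-integrability step, where boundedness in $L^{n'}$ for some $n'$ with $n<n'<1/\beta^2$ (available since GMC moments exist for all real orders below $1/\beta^2$) is what you actually need to pass the $n$th moments to the limit.
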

We constructed a probability distribution having the required moments, see Theorem \ref{theoremcircle}, which leads us to
the following conjecture.
\begin{conjecture}[Law of Total Mass]\label{ourmainconjcircle}
Let $M_{(\tau,\lambda_1,\lambda_2)}$ be as in \eqref{thedecompcircle}. Let 
\begin{equation}
\lambda_1=\lambda_2=\alpha,\footnote{This restriction is necessary as $M_{(\tau,\lambda_1,\lambda_2)}$ is real-valued whereas  
$\int_{-\pi}^{\pi} e^{i\psi \frac{\lambda_1-\lambda_2}{2}} \, |1+e^{i\psi}|^{\lambda_1+\lambda_2}\, dM_{\beta}(\psi)$ is not in general, unless $\lambda_1=\lambda_2.$  The problem of determining the law of $\int_{-\pi}^{\pi} e^{i\psi \frac{\lambda_1-\lambda_2}{2}} \, |1+e^{i\psi}|^{\lambda_1+\lambda_2}\, dM_{\beta}(\psi)$ for $\lambda_1\neq\lambda_2$ is left to future research.}
\end{equation}
%%I conjecture
\begin{equation}
M_{(\tau,\alpha,\alpha)} \overset{{\rm in \,law}}{=}   \int_{-\pi}^{\pi} |1+e^{i\psi}|^{2\alpha}\, dM_{\beta}(\psi),\; \tau=1/\beta^2>1.
\end{equation}
\end{conjecture}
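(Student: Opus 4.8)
\medskip
\noindent\textbf{Proof proposal for Conjecture \ref{ourmainconjcircle}.}
The statement to establish is that the total mass $T_{\alpha}\triangleq\int_{-\pi}^{\pi}|1+e^{i\psi}|^{2\alpha}\,dM_{\beta}(\psi)$ of the circle multiplicative chaos measure coincides in law with the Morris integral distribution $M_{(\tau,\alpha,\alpha)}$ of Theorem \ref{theoremcircle}, where $\tau=1/\beta^{2}>1$; the restriction $\lambda_{1}=\lambda_{2}=\alpha$ is precisely what keeps $T_{\alpha}$ real and non-negative. The plan is a moment-plus-rigidity argument: first match all the positive moments that $T_{\alpha}$ possesses with those of $M_{(\tau,\alpha,\alpha)}$, and then upgrade equality of moments to equality in law by computing the full Mellin transform of $T_{\alpha}$ on its maximal strip of analyticity and comparing it to \eqref{thefunction}.

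First I would record the moment identity. By Theorem \ref{multichaoscircle}, for every integer $n<1/\beta^{2}=\tau$ the moment ${\bf E}[T_{\alpha}^{n}]$ equals the Morris integral \eqref{morris2} with $\lambda_{1}=\lambda_{2}=\alpha$ and pair exponent $-2\beta^{2}=-2/\tau$; by Theorem \ref{theoremcircle}, ${\bf E}[M_{(\tau,\alpha,\alpha)}^{n}]$ is the same Morris integral for $n<\tau$. Hence the two laws agree on all integer moments $0\le n<\tau$. One should also verify that ${\bf E}[T_{\alpha}^{q}]=\infty$ for $\Re(q)\ge\tau$, the standard moment blow-up of subcritical chaos, which matches the pole of $\mathfrak{M}(q\,|\,\tau,\alpha,\alpha)$ produced by the factor $\Gamma_{2}(-q+\tau\,|\,\tau)$ in \eqref{thefunction}.

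Since for finite $\tau$ only finitely many integer moments of $T_{\alpha}$ are finite --- and since Theorem \ref{theoremcircle} asserts determinacy only for the \emph{inverse} distribution $M_{(\tau,\alpha,\alpha)}^{-1}$, so that the positive-moment problem for $M_{(\tau,\alpha,\alpha)}$ itself may be indeterminate --- the ordinary Hamburger/Stieltjes criterion is of no use, and the second step must identify $q\mapsto{\bf E}[T_{\alpha}^{q}]$ as an analytic function on $\{\Re(q)<\tau\}$ and match it to $\mathfrak{M}(q\,|\,\tau,\alpha,\alpha)$. The mechanism I would use is the exact stochastic scaling of the circle kernel $-2\log|e^{i\psi}-e^{i\xi}|$: a Cameron--Martin/Girsanov shift of $V_{\varepsilon}$ by a multiple of $\log|1+e^{i\psi}|$, combined with Gaussian integration by parts and careful control of the chaos near the point $\psi=\pi$ where $|1+e^{i\psi}|^{2\alpha}$ vanishes, should yield a closed functional equation for $G(q)\triangleq{\bf E}[T_{\alpha}^{q}]$ relating $G(q)$ to $G(q-1)$, together with a companion equation under the shift $\alpha\mapsto\alpha\pm 1/\tau$. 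The double-gamma building blocks of $\mathfrak{M}$ satisfy the matching pair of equations by virtue of the recursion \eqref{feq} for $\Gamma_{2}(\cdot\,|\,\tau)$ (which, upon shifting the argument by $1$ or by $\tau$, brings in ordinary $\Gamma$ factors); these equations then pin down the Mellin transform on the strip up to a $1$-periodic factor, and that factor is removed by meromorphic/growth control on $G$ together with the normalization ${\bf E}[T_{\alpha}]=\int_{-\pi}^{\pi}|1+e^{i\psi}|^{2\alpha}\,d\psi$, the $n=1$ Morris moment. The involution invariance \eqref{invcircle}, which on the chaos side is the $\beta\leftrightarrow 1/\beta$ self-duality, gives an independent consistency check and can absorb the periodic ambiguity if the growth bounds are not sharp enough; a further cross-check is the explicit infinitely-divisible factorization \eqref{thedecompcircle} of $M_{(\tau,\alpha,\alpha)}$ into inverse Barnes betas $\beta_{2,2}^{-1}$ and $\beta_{1,0}^{-1}$.

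The genuine obstacle is this second step: rigorously deriving and solving the shift equations for the subcritical chaos total mass on the circle is essentially the content of the Fyodorov--Bouchaud formula \cite{FyoBou} and its $\alpha$-deformation, for which only partial results are available \cite{Webb}. Everything else --- the moment identity via Theorem \ref{multichaoscircle}, the pole structure, the normalizing constant, and the checks against \eqref{invcircle} and \eqref{thedecompcircle} --- is within reach of the tools assembled in the paper, so the conjecture would follow at once from a proof that subcritical circle chaos obeys the conjectured closed-form Mellin transform. Short of that, the strongest unconditional statement is that $T_{\alpha}$ and $M_{(\tau,\alpha,\alpha)}$ share all finite integer moments and the same structural profile (support $(0,\infty)$, infinite divisibility of the logarithm, involution-invariant Mellin transform), which is the evidence on which the conjecture stands. \qed
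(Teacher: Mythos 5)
This statement is a conjecture, and the paper offers no proof of it --- its only stated justification is precisely your first step: the integer moments of the total mass given by Theorem \ref{multichaoscircle} coincide with those of $M_{(\tau,\alpha,\alpha)}$ from Theorem \ref{theoremcircle}, both being the Morris integral \eqref{morris2}. Your write-up reproduces that evidence, correctly explains why it falls short of a proof (only finitely many positive moments are finite, and determinacy in Theorem \ref{theoremcircle} is asserted only for the inverse distribution), and rightly identifies the full Mellin transform of the subcritical chaos mass as the open problem, so your proposal is faithful to the paper's own position and adds a reasonable, though unexecuted, sketch of what a genuine proof would require.
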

This conjecture for $\lambda_1=\lambda_2=0$ is due to \cite{FyoBou}, the general case is original to this paper.

%%so that
It follows that the exponential functional in \eqref{Zdefc} can be approximated by
\begin{equation}\label{keyapproxc}
Z_{\alpha,\varepsilon}(\beta) \approx \bigl(\frac{N}{2\pi}\bigr)^{1+\beta^2}\, e^{\beta^2\kappa}\,M_{(\tau,\beta\alpha,\beta\alpha)},\;N\rightarrow\infty.
\end{equation}
The rest of the argument is the same as for the GFF on the interval, with \eqref{invcircle} replacing \eqref{involutionint}.

\section{Proofs}
\noindent We begin with a key lemma on the action of $\mathcal{S}_{M-1}$ on polynomials.
\begin{lemma}[Main lemma]\label{MyLemma}
Let $f(t)$ be a fixed function and define the associated Bernoulli polynomials by
\begin{equation}\label{Bdefa}
B^{(f)}_{k}(x) \triangleq \frac{d^m}{dt^k}|_{t=0} \bigl[f(t)
e^{-xt}\bigr].
\end{equation}
\begin{align}
\bigl(\mathcal{S}_{M-1} B^{(f)}_{k}\bigr)(q\,|\,b) & = 0,\; k<M-1, \label{id1}\\
\bigl(\mathcal{S}_{M-1}B^{(f)}_{M-1}\bigr)(q\,|\,b) & = \bigl(\mathcal{S}_{M-1} B^{(f)}_{M-1}\bigr)(0\,|\,b), \label{id2}\\
\bigl(\mathcal{S}_{M-1} B^{(f)}_{M}\bigr)(q\,|\,b) & - \bigl(\mathcal{S}_{M-1} B^{(f)}_{M}\bigr)(0\,|\,b) 
 = -qf(0)\,M! \prod_{j=1}^{M-1} b_j. \label{id3}
\end{align}     
\end{lemma}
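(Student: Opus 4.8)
The plan is to analyze the action of $\mathcal{S}_{M-1}$ on the function $h_k(x) = B^{(f)}_k(x)$, exploiting the fact that each $B^{(f)}_k$ is a polynomial in $x$ of degree exactly $k$ (since $B^{(f)}_k(x) = \sum_{i=0}^k \binom{k}{i}(-x)^i f^{(k-i)}(0)$ by Leibniz). The key structural observation is that $\mathcal{S}_{M-1}$, applied to any function, produces an alternating sum over all subsets of $\{b_1,\dots,b_{M-1}\}$ of the shifted arguments $q+b_0+\sum_{l\in S}b_l$. When $h$ is a monomial of degree $d$, I would expand $h(q+b_0+\sum_{l\in S}b_l)$ multinomially; each resulting term is a product of powers of a proper subset of the $b_l$'s only if $d<M-1$, so by the inclusion–exclusion identity $\sum_{p=0}^{M-1}(-1)^p\sum_{k_1<\cdots<k_p} g(b_{k_1},\dots,b_{k_p})$ annihilates any function that does not depend on all $M-1$ of the variables. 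This is the standard finite-difference vanishing: applying the operator is equivalent to taking the mixed $(M-1)$-st order finite difference (one difference in each variable $b_1,\dots,b_{M-1}$) of the map $b \mapsto h(q+b_0+b_1+\cdots+b_{M-1})$ restricted appropriately. I would make this precise by noting $(\mathcal{S}_{M-1}h)(q\,|\,b) = (\Delta_{b_1}\cdots\Delta_{b_{M-1}} \tilde h)(0)$ for a suitable auxiliary function $\tilde h$, where $\Delta_{b_l}$ is the operator $g\mapsto g|_{b_l}-g|_{b_l=0}$; then $(M-1)$-fold differencing kills any polynomial of degree $<M-1$.

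For \eqref{id1}: since $B^{(f)}_k$ has degree $k<M-1$, the mixed $(M-1)$-st finite difference vanishes identically, giving $(\mathcal{S}_{M-1}B^{(f)}_k)(q\,|\,b)=0$ for all $q$. For \eqref{id2}: $B^{(f)}_{M-1}$ has degree exactly $M-1$, and in the mixed $(M-1)$-st difference only the top-degree part survives; the top-degree monomial of $B^{(f)}_{M-1}(y)$ is $(-1)^{M-1}f(0)y^{M-1}$, and the coefficient of $b_1b_2\cdots b_{M-1}$ in $(q+b_0+b_1+\cdots+b_{M-1})^{M-1}$ is $(M-1)!$, independent of $q$. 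Hence $(\mathcal{S}_{M-1}B^{(f)}_{M-1})(q\,|\,b)$ is a constant in $q$, proving \eqref{id2}. For \eqref{id3}: $B^{(f)}_M$ has degree $M$; again only the parts of degree $\geq M-1$ contribute to the $(M-1)$-fold difference, and the difference of the degree-$(M-1)$ part is $q$-independent (same argument as \eqref{id2}), so it cancels in $(\mathcal{S}_{M-1}B^{(f)}_M)(q)-(\mathcal{S}_{M-1}B^{(f)}_M)(0)$. What remains is the contribution of the top monomial $(-1)^M f(0)y^M$ in $B^{(f)}_M(y)$. I would extract the coefficient of $b_1\cdots b_{M-1}$ in $(q+b_0+b_1+\cdots+b_{M-1})^M$: by the multinomial theorem this is $M!/1!\cdots$ times the sum of $(q+b_0)^{1}$ over the one remaining "degree-1 slot", i.e. the coefficient is $M!\,(q+b_0)$ after accounting for the remaining power-$1$ factor distributed over $q$ and $b_0$; more carefully, the coefficient of $b_1b_2\cdots b_{M-1}\cdot(q+b_0)$ is $M!/(1!)^{M-1}\cdot\binom{M}{1}/M = \dots$ — I will compute it cleanly — and the $b_0$-part, being $q$-independent, drops out in the difference, leaving exactly $(-1)^M f(0)\cdot M!\cdot q$ times an overall sign $(-1)^{M-1}$ coming from the orientation of the finite-difference operator, which yields $-qf(0)M!\prod_{j=1}^{M-1}b_j$.

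The main obstacle will be bookkeeping the combinatorial constant and sign in \eqref{id3}: one must correctly identify that among the $M$ "units" of degree in $(q+b_0+b_1+\cdots+b_{M-1})^M$, exactly $M-1$ are consumed (one each) by the variables $b_1,\dots,b_{M-1}$ whose coefficient we are isolating, the last unit is split between $q$ and $b_0$, and the alternating-sum operator $\mathcal{S}_{M-1}$ contributes the sign $(-1)^{M-1}$ that makes the leading term of a degree-$(M-1)$ polynomial map to a positive multiple of $\prod b_j$ but the degree-$M$ "linear-in-$q$ times $\prod b_j$" term map to $-q\cdot M!\cdot f(0)\prod b_j$. I would verify the constant on the base case $M=2$, where $(\mathcal{S}_1 h)(q\,|\,b)=h(q+b_0)-h(q+b_0+b_1)$ and $B^{(f)}_2(x)=f''(0)-2xf'(0)+x^2f(0)$, giving directly $(\mathcal{S}_1 B^{(f)}_2)(q)-(\mathcal{S}_1 B^{(f)}_2)(0) = -[(q+b_0)^2-(q+b_0+b_1)^2]f(0)+[b_0^2-(b_0+b_1)^2]f(0) = -2qb_1 f(0) = -qf(0)\cdot 2!\cdot b_1$, confirming the formula; the general case is the same computation carried out with the mixed difference operator.
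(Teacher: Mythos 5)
Your argument is correct, and it reaches the lemma by a route that differs in execution from the paper's. Both proofs rest on the same underlying fact, namely that the alternating subset sum defining $\mathcal{S}_{M-1}$ factors as a product of $M-1$ first-order difference operators, one per variable $b_1,\dots,b_{M-1}$. The paper exploits this factorization in the $t$-domain: it applies $\mathcal{S}_{M-1}$ to $e^{-xt}$ to get the product $e^{-(q+b_0)t}\prod_{j=1}^{M-1}(1-e^{-b_jt})$ (equation \eqref{theid}), forms $g(t)=f(t)\,(\mathcal{S}_{M-1}e^{-xt})(q\,|\,b)$, and reads off \eqref{id1}--\eqref{id3} from $g^{(k)}(0)$ using the order-$(M-1)$ zero of the product at $t=0$; the payoff is that \eqref{id3} collapses, via one application of Leibniz, to the elementary identity $B^{(f)}_1(q+b_0)-B^{(f)}_1(b_0)=-qf(0)$, so no multinomial bookkeeping is needed. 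You instead work in the $x$-domain: $B^{(f)}_k$ is a polynomial of degree $k$ with leading coefficient $(-1)^kf(0)$, the mixed $(M-1)$-fold difference kills degree $<M-1$, sends degree $M-1$ to a $q$-independent constant, and sends degree $M$ to an affine function of $q$ whose slope is $(-1)^{M-1}M!\prod_j b_j$ times the leading coefficient, giving $-qf(0)M!\prod_j b_j$. Your version is self-contained and arguably more transparent about \emph{why} the lemma holds (pure degree counting), at the cost of the combinatorial constant in \eqref{id3}; the paper's version gets that constant for free but requires introducing the auxiliary generating function and ties the lemma to the Malmst\'en-type representation \eqref{keyRuij} it is about to be used with.

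Two small blemishes, neither fatal. First, you leave the general multinomial coefficient ``to be computed cleanly''; the clean statement is that the coefficient of $b_1\cdots b_{M-1}\,(q+b_0)$ in $(q+b_0+b_1+\cdots+b_{M-1})^M$ is $M!$, the only subset contributing the monomial $b_1\cdots b_{M-1}$ is $S=\{1,\dots,M-1\}$ with sign $(-1)^{M-1}$, and combined with the leading coefficient $(-1)^Mf(0)$ of $B^{(f)}_M$ this yields $-qf(0)M!\prod_j b_j$; you should also note that the $q$-linear part of the $(M-1)$-fold difference of a degree-$M$ polynomial is exactly proportional to $\prod_j b_j$ (each difference operator extracts one factor $b_j$), so nothing else contributes. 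Second, in your $M=2$ check the two bracketed terms carry the wrong signs (they should read $+[(q+b_0)^2-(q+b_0+b_1)^2]f(0)-[b_0^2-(b_0+b_1)^2]f(0)$); the stated conclusion $-2qb_1f(0)$ is nevertheless the correct one.
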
       
\begin{proof}
The identities in \eqref{id1} and \eqref{id2} were first established in \cite{Me13}. As the argument for 
\eqref{id3} is similar, we will reproduce the original argument here for completeness. 
Define the function
$g(t),$
\begin{equation}\label{gfunction}
g(t) \triangleq f(t) e^{-qt} e^{-b_0
t}\prod\limits_{j=1}^N (1-e^{-b_j t}).
\end{equation}
Then, using the identity
\begin{align}
e^{-(q+b_0) t}\prod\limits_{j=1}^N (1-e^{-b_j t}) = & e^{-(q+b_0) t}\sum\limits_{p=0}^N (-1)^p
\sum\limits_{k_1<\cdots<k_p=1}^N
\exp\bigl(-(b_{k_1}+\cdots+b_{k_p})t\bigr), \nonumber \\  = & \bigl(\mathcal{S}_{N} e^{-xt}\bigr)(q|b), \label{theid}
\end{align}
one obtains
\begin{align}
g^{(k)}(0) & = %%\sum\limits_{m=0}^n \binom{n}{m} B^{(f)}_{n-m}(q)
%%\frac{d^{m+r}}{dt^{m+r}}|_{t=0}\bigl[e^{-b_0 t}\prod\limits_{j=1}^N
%%(1-e^{-b_j t})\bigr], \label{line1} \\
(\mathcal{S}_{N}
B^{(f)}_k)(q\,|\,b), \label{line2orig} \\
&= 0,\;{\rm if}\;k<N, \label{deriv1}\\
&= f(0)\,N! \prod\limits_{j=1}^N b_j,\;{\rm
if}\;k=N.\label{deriv2}
\end{align}
%%Letting $f(t)=1,$ $N=M-1,$ we obtain  \eqref{id1} and\eqref{id2}, and letting $f(t)=f_M(t|a)$ as in \eqref{fdef},
%%we get \eqref{id4} and \eqref{id5}. 
To verify \eqref{id3}, letting $N=M-1,$ $k=M,$ one observes that \eqref{line2orig} implies the identity
\begin{equation}
(\mathcal{S}_{M-1}
B^{(f)}_M)(q\,|\,b) = f(0)\frac{d^M}{dt^M}|_{t=0}\prod\limits_{j=1}^{M-1} (1-e^{-b_j t}) +
M! \prod\limits_{j=1}^{M-1} b_j \,B^{(f)}_{1}(q+b_0).
\end{equation}
Hence, %%the alternating sums that occurs in \eqref{id3} and \eqref{id6} can be written as
\begin{gather}
\bigl(\mathcal{S}_{M-1} B^{(f)}_{M}(x)\bigr)(q\,|\,b) - \bigl(\mathcal{S}_{M-1} B^{(f)}_{M}(x)\bigr)(0\,|\,b) 
%%\bigl(\mathcal{S}_{M-1} B^{(f)}_{M}(x)\bigr)(-q\,|\,\bar{b}) -\bigl(\mathcal{S}_{M-1} B^{(f)}_{M}(x)\bigr)(0\,|\,\bar{b}),\nonumber \\
= \nonumber \\ M! \prod\limits_{j=1}^{M-1} b_j \Bigl( B^{(f)}_{1}(q+b_0)- B^{(f)}_{1}(b_0)%%+ B^{(f)}_{1}(-q+\bar{b}_0)- B^{(f)}_{1}(\bar{b}_0)
\Bigr).
\label{alter}
\end{gather}
Finally, it remains to notice that $B^{(f)}_{1}(x)$ satisfies the identity
\begin{equation}
B^{(f)}_{1}(x) - B^{(f)}_{1}(x+y) = f(0)\,y,
\end{equation}
and \eqref{id3} follows. 
\qed
\end{proof}
\begin{proof}[Proof of Theorem \ref{mainsine}]
%%Let $\eta_{M}(q|a, b)$ be defined by \eqref{etaL}. 
We wish to establish the identity
\begin{equation}\label{keyid}
\eta_{M, M-1}(q|a, b) = \exp\Bigl(
\int\limits_0^\infty \frac{dt}{t}\Bigl[(e^{-tq}-1)e^{-b_0 t}\frac{\prod\limits_{j=1}^{M-1} (1-e^{-b_j t})}{\prod\limits_{i=1}^M (1-e^{-a_i t})} + 
q e^{-t} \frac{\prod\limits_{j=1}^{M-1} b_j}{\prod\limits_{i=1}^M a_i}\Bigr]
\Bigr).
\end{equation}
Once it is established, \eqref{LKHsine} follows immediately by adding and subtracting $qt$ in the integrand,
which allows us to split the integral in \eqref{keyid} into two individual integrals and thereby bring it 
to the required L\'evy-Khinchine form. To verify \eqref{keyid} we need to recall the Ruijsenaars formula in Theorem \ref{R}.
We see from Lemma \ref{MyLemma} that for any $k=0\cdots M-1$ we have the identity
\begin{align}
\bigl(\mathcal{S}_{M-1} B_{M, k}(x|a)\bigr)(q\,|\,b) & - \bigl(\mathcal{S}_{M-1} B_{M, k}(x|a)\bigr)(0\,|\,b) 
%%\bigl(\mathcal{S}_{M-1} B_{M, k}(x|a)\bigr)(-q\,|\,\bar{b}) - \nonumber \\&-\bigl(\mathcal{S}_{M-1} B_{M, k}(x|a)\bigr)(0\,|\,\bar{b})
 = 0.
\end{align}
It follows upon substituting \eqref{keyRuij} into \eqref{etaL} and using \eqref{id3} that the only non-vanishing terms are
\begin{align}
\eta_{M, M-1}(q|a, b) =& \exp\Bigl( \int\limits_0^\infty \frac{dt}{t} \Bigl[\frac{
\bigl(\mathcal{S}_{M-1} e^{-xt}\bigr)(q\,|\,b)-\bigl(\mathcal{S}_{M-1} e^{-xt}\bigr)(0\,|\,b)}{\prod\limits_{i=1}^M (1-e^{-a_i t})}
%%\bigl(\mathcal{S}_{M-1} e^{-xt}\bigr)(-q\,|\,\bar{b}),\nonumber \\ &-\bigl(\mathcal{S}_{M-1} e^{-xt}\bigr)(0\,|\,\bar{b})
+\nonumber \\ & + qe^{-t}  \frac{\prod\limits_{j=1}^{M-1} b_j}{\prod\limits_{i=1}^M a_i}
\Bigr]
\Bigl),
\end{align}
and the result follows from \eqref{theid}. Now, we have the obvious identities
\begin{gather}
\int\limits_0^\infty e^{-b_0 t} \frac{\prod\limits_{j=1}^{M-1} (1-e^{-b_j t})}{\prod\limits_{i=1}^M (1-e^{-a_i t})} \frac{dt}{t} = \infty, \\
\int\limits_0^\infty e^{-b_0 t} \frac{\prod\limits_{j=1}^{M-1} (1-e^{-b_j t})}{\prod\limits_{i=1}^M (1-e^{-a_i t})} dt = \infty,
\end{gather}
which imply that $\log\beta_{M, M-1}(a,b)$ is absolutely continuous and supported on $\mathbb{R}$ by
Theorem 4.23 and Proposition 8.2 in Chapter 4 of \cite{SteVHar}, respectively. 
The scaling invariance in \eqref{scalinvgen}
is a corollary of \eqref{scale} and \eqref{id3}. \qed
\end{proof}
\begin{proof}[Proof of Theorem \ref{newetaasympt}]
The asymptotic expansion of $\log\Gamma_M(w\,|\, a)$ in \eqref{asym} consists of
the leading term $-B_{M, M}(w\,|\,a)\,\log(w)/M!$ plus a polynomial remainder. Lemma \ref{MyLemma}
shows that the remainder term contributes $O(q)$ to $\log\eta_{M, M-1}(q|a, b).$ It remains to
show that as $q\rightarrow \infty,$
\begin{equation}
\bigl(\mathcal{S}_{M-1} B_{M, M}(x\,|\,a)\,\log(x)\bigr)(q|b) %%-  \bigl(\mathcal{S}_{M-1} B_{M, M}(x)\,\log(x)\bigr)(0|b) = 
= - M!\bigl(\prod\limits_{j=1}^{M-1} b_j/\prod\limits_{i=1}^M a_i\bigr) q\log(q) +
O(q).
\end{equation}
%%Expanding the $\log$ in powers of $1/q,$ 
Slightly generalizing the calculation in Lemma \ref{MyLemma}, let
\begin{equation}\label{gfunctionasym}
g(t) \triangleq f(t) e^{-qt} \frac{d^r}{dt^r}\bigl[e^{-b_0
t}\prod\limits_{j=1}^{M-1} (1-e^{-b_j t})\bigr].
\end{equation}
Then,
\begin{align}
g^{(n)}(0) & = \sum\limits_{m=0}^n \binom{n}{m} B^{(f)}_{n-m}(q)
\frac{d^{m+r}}{dt^{m+r}}|_{t=0}\bigl[e^{-b_0 t}\prod\limits_{j=1}^{M-1}
(1-e^{-b_j t})\bigr], \label{line1} \\
& = (-1)^r \sum\limits_{p=0}^{M-1} (-1)^p
\sum\limits_{k_1<\cdots<k_p=1}^{M-1} \bigl(b_0+\sum b_{k_j}\bigr)^r \,
B^{(f)}_n\bigl(q+b_0+\sum b_{k_j}\bigr). \label{line2} 
\end{align}
In our case $n=M,$ $f(t)$ as in \eqref{fdef}, and the expression in \eqref{line2} is the coefficient of
$1/q^r$ that one gets by expanding $\bigl(\mathcal{S}_{M-1} B_{M, M}(x\,|\,a)\,\log(x)\bigr)(q|b)$ in powers of $1/q.$ 
By \eqref{line1} we can restrict ourselves to $m+r\geq M-1.$  On the other hand, 
the overall power of $q$ is $M-m-r$ so that we are only interested in $M-m-r\geq 1,$
as the other terms contribute $O(1).$  Hence, $m=M-r-1,$ and the contribution of such terms
is $O(q),$
\begin{equation}
\bigl(\mathcal{S}_{M-1} B_{M, M}(x\,|\,a)\,\log(x)\bigr)(q|b) = \log(q)\bigl(\mathcal{S}_{M-1} B_{M, M}(x\,|\,a)\,\bigr)(q|b)+  O(q),
\end{equation}
and the result follows from \eqref{id3}. Finally, the asymptotic behavior in \eqref{ourasymN} coincides with 
the asymptotic behavior of generalized gamma distributions and \eqref{condition} follows from
the known solution to the Stieltjes moment problem for these distributions, see \cite{Stoyanov}.\qed
\end{proof}
\begin{proof}[Proof of Theorem \ref{FunctEquatSine}]
%%This is an immediate corollary of the fundamental functional equation \eqref{feq}. 
The factorizations in \eqref{infinprod2L} and
\eqref{infinprod1L} are corollaries of Lemma \ref{MyLemma} and Shintani and Barnes factorizations of the multiple gamma functions, see \eqref{generalfactorization} and \eqref{barnes}, respectively. A direct proof can be given as follows. The Mellin transform of $\beta_{M, M-1}(a, b, \bar{b})$ is
\begin{equation}\label{keyidb}
\eta_{M, M-1}(q|a, b, \bar{b}) = \exp\Bigl(
\int\limits_0^\infty \frac{dt}{t}\Bigl((e^{-tq}-1)e^{-b_0 t}+(e^{tq}-1)e^{-\bar{b}_0 t}\Bigr)\frac{\prod\limits_{j=1}^{M-1} (1-e^{-b_j t})}{\prod\limits_{i=1}^M (1-e^{-a_i t})}\Bigr).
\end{equation}
Let $i=M$ without any loss
of generality. The formula in \eqref{keyidb} can be written in the form
\begin{align}
\eta_{M, M-1}(q|a, b, \bar{b})  = &e^{
\int\limits_0^\infty \frac{dt}{t}\sum\limits_{k=0}^\infty \Bigl[(e^{-tq}-1)e^{-(b_0+ka_M) t} + (e^{tq}-1) e^{-(\bar{b}_0+ka_M)t} \Bigr] \prod\limits_{j=1}^{M-1} \frac{(1-e^{-b_j t})}{(1-e^{-a_j t})}}, \nonumber \\
= & \prod\limits_{k=0}^\infty  \exp\Bigl(
\int\limits_0^\infty \frac{dt}{t} (e^{-tq}-1)e^{-(b_0+ka_M) t} \prod\limits_{j=1}^{M-1} \frac{(1-e^{-b_j t})}{(1-e^{-a_j t})}\Bigr)
\times \nonumber \\  &\times \exp\Bigl(
\int\limits_0^\infty \frac{dt}{t} (e^{tq}-1) e^{-(\bar{b}_0+ka_M)t} \prod\limits_{j=1}^{M-1} \frac{(1-e^{-b_j t})}{(1-e^{-a_j t})}\Bigr)
, \label{keyid2}
\end{align}
which is exactly the same as the expression in \eqref{infinprod2L} if one recalls \eqref{LKH} and the following identity
that we first noted in \cite{Me13},
\begin{equation}
\eta_{M, N}(q\,|\,a,\,b_0+x, b_1\cdots b_N)\,\eta_{M, N}(x\,|\,a,\,b) = \eta_{M, N}(q+x\,|\,a,\,b), \; x>0,
\end{equation}
where $b=(b_0,b_1,\cdots b_N).$ The expression in \eqref{keyid2} is equivalent to \eqref{probshin}.
If we now apply \eqref{probbarnesfac} to each Barnes beta factor in \eqref{probshin}, we obtain \eqref{probbarnes},
which is equivalent to \eqref{infinprod1L}. The scaling invariance in \eqref{scalinvL} follows from \eqref{scalinvgen}.
\qed
\end{proof}
\begin{proof}[Proof of Theorem \ref{theoremcircle}]
Recall the functional equation of the double gamma function, see \eqref{feq} with $M=2,$ and the definition of $\Gamma_1(w|a)$ in
\eqref{gamma1}. Using the functional equation repeatedly, we obtain for $a=(1,\tau)$ and $k\in\mathbb{N},$
\begin{equation}\label{repeated}
\frac{\Gamma_2(z+1-k\,|\,\tau)}{\Gamma_2(z+1\,|\,\tau)} = \bigl(\frac{1}{2\pi\tau}\bigr)^{k/2} \tau^{\sum\limits_{j=0}^{k-1} (z-j)/\tau}\;
\prod\limits_{j=0}^{k-1} \Gamma\bigl(\frac{z}{\tau}-\frac{j}{\tau}\bigr).
\end{equation}
We now apply this equation to each of the four ratios of double gamma functions in \eqref{thefunction}  with $q=n,$ 
which results in \eqref{morris2}. Now, the inverse Barnes beta distribution $\beta^{-1}_{2,2}(a, b)$ with parameters
$a=(1,\tau)$ and $b=(\tau, 1+\tau\lambda_1, 1+\tau\lambda_2)$ has the Mellin transform
\begin{align}
{\bf E}\bigl[\beta^{-q}_{2,2}(\tau, \tau, 1+\tau\lambda_1, 1+\tau\lambda_2)\bigr] = &\frac{\Gamma_2(-q+\tau\,|\,\tau)}{\Gamma_2(\tau\,|\,\tau)}
\frac{\Gamma_2(\tau(1+\lambda_1)+1\,|\,\tau)}{\Gamma_2(\tau(1+\lambda_1)+1-q\,|\,\tau)}\times \nonumber \\ & \times
\frac{\Gamma_2(\tau(1+\lambda_2)+1\,|\,\tau)}{\Gamma_2(\tau(1+\lambda_2)+1-q\,|\,\tau)}
\times \nonumber \\ & \times
\frac{\Gamma_2(\tau(\lambda_1+\lambda_2+1)+2-q\,|\,\tau)}{\Gamma_2(\tau(\lambda_1+\lambda_2+1)+2\,|\,\tau)}.
\end{align}
The difference between this expression and that in \eqref{thefunction} is in the last factor. Applying the functional
equation once again, we get
\begin{align}
\frac{\Gamma_2(\tau(\lambda_1+\lambda_2+1)+1-q\,|\,\tau)}{\Gamma_2(\tau(\lambda_1+\lambda_2+1)+1\,|\,\tau)}
=&\tau^{-\frac{q}{\tau}} 
\frac{\Gamma(\lambda_1+\lambda_2+1+\frac{1-q}{\tau})}{\Gamma(\lambda_1+\lambda_2+1+\frac{1}{\tau})}\times \nonumber \\ & \times
\frac{\Gamma_2(\tau(\lambda_1+\lambda_2+1)+2-q\,|\,\tau)}{\Gamma_2(\tau(\lambda_1+\lambda_2+1)+2\,|\,\tau)}.
\end{align}
Recalling the definition of the Mellin transform of the inverse Barnes beta  $\beta^{-1}_{1,0}(a, b)$ with parameters
$a=\tau$ and $b=1+\tau(1+\lambda_1+\lambda_2),$ 
\begin{equation}
{\bf E} \bigl[\beta^{-q}_{1,0}\bigl((\tau, 1+\tau(1+\lambda_1+\lambda_2)\bigr)\bigr] = 
 \tau^{-\frac{q}{\tau}} \frac{\Gamma(-\frac{q}{\tau}+1+\lambda_1+\lambda_2+\frac{1}{\tau})}{\Gamma(1+\lambda_1+\lambda_2+\frac{1}{\tau})},
\end{equation}
we see that the Mellin transform of the distribution $M_{(\tau, \lambda_1, \lambda_2)}$ in \eqref{thedecompcircle} coincides with the expression in \eqref{thefunction}.

The infinite divisibility of $\log M_{(\tau, \lambda_1, \lambda_2)}$ follows from that of $\log\beta^{-1}_{2,2}(a, b)$ and $\log\beta^{-1}_{1,0}(a, b).$
The determinacy of the Stieltjes moment problem for $M^{-1}_{(\tau, \lambda_1, \lambda_2)}$ follows from Theorem \ref{newetaasympt} as
$\beta_{2,2}(a, b)$ is compactly supported. In our case the condition for determinacy in \eqref{condition},
\begin{equation}
1\leq 2\tau,
\end{equation}
is satisfied as $\tau>1.$ The negative moments and the special case of $\lambda_1=\lambda_2=0$ follow from \eqref{repeated}. 
To prove the involution invariance in \eqref{invcircle}, we need to recall the scaling property of the multiple gamma function, see \eqref{scale}.
The transformation in \eqref{invtranscircle} corresponds to $\kappa=1/\tau.$ We note first that
\begin{equation}
\frac{1}{\tau}(1,\,\tau)=(1, \,\frac{1}{\tau})
\end{equation}
in the sense of the parameters of the double gamma function. We apply \eqref{scale} to each of the double gamma factors in \eqref{thefunction}
under the transformation in \eqref{invtranscircle}. For example, 
\begin{align}
\Gamma_2\bigl(\frac{1}{\tau}(\tau\lambda_1+\tau\lambda_2+1)+1-\frac{q}{\tau}\,\Big|\,\frac{1}{\tau}\bigr) = & \Gamma_2\Bigl(\frac{1}{\tau}\bigl(\tau(\lambda_1+\lambda_2+1)+1-q\bigr)\,\Big|\,\frac{1}{\tau}\Bigr), \nonumber \\
= & \bigl(\frac{1}{\tau}\bigr)^{-B_{2,2}(\tau(\lambda_1+\lambda_2+1)+1-q)/2} \times\nonumber \\ & \times \Gamma_2\Bigl(\tau(\lambda_1+\lambda_2+1)+1-q\,\Big|\,\tau\Bigr).
\end{align}
Using the identity
\begin{equation}\label{B22}
B_{2,2}(x\,|\,a) =
\frac{x^2}{a_1a_2}-\frac{x(a_1+a_2)}{a_1a_2}+\frac{a_1^2+3a_1a_2+a_2^2}{6a_1a_2},
\end{equation}
with $(a_1=1,\,a_2=\tau),$ we collect all terms and simplify to obtain
\begin{align}
\mathfrak{M}\bigl(\frac{q}{\tau}\,|\,\frac{1}{\tau},\tau\lambda_1,\tau\lambda_2\bigr) (2\pi)^{-\frac{q}{\tau}}\,\Gamma^{\frac{q}{\tau}}(1-\tau) = &
\mathfrak{M}(q\,|\,\tau,\lambda_1,\lambda_2)\Bigl(\frac{2\pi\tau^{\frac{1}{\tau}}}{\Gamma(1-\frac{1}{\tau})}\Bigr)^{-q} \times \nonumber \\ & \times
\frac{\Gamma_2(1-q\,|\tau)\Gamma_2(\tau\,|\tau)}{\Gamma_2(1\,|\tau)\Gamma_2(\tau-q\,|\tau)}.
\end{align}
It remains to observe that the functional equation of the double gamma function implies the identity
\begin{equation}
\tau^{-\frac{q}{\tau}} \frac{\Gamma_2(1-q\,|\tau)\Gamma_2(\tau\,|\tau)}{\Gamma_2(1\,|\tau)\Gamma_2(\tau-q\,|\tau)} = 
\frac{\Gamma(1-q)}{\Gamma(1-\frac{q}{\tau})},
\end{equation}
which gives the result. \qed
\end{proof}

\section{Conclusion and Open Questions}
\noindent
We introduced a new family $\beta_{M, M-1}(a, b),$ $M\in\mathbb{N},$ of Barnes beta distributions, \emph{i.e.} 
distributions whose Mellin transform is defined to be an intertwining product of ratios of multiple 
gamma functions of Barnes, that are supported on $(0, \infty).$ We showed infinite divisibility and 
absolute continuity of $\log\beta_{M, M-1}(a, b)$ by computing its Levy-Khinchine decomposition explicitly. 
We calculated the asymptotic expansion of the Mellin transform of $\beta_{M, M-1}(a, b)$ and %%used it to 
solved the Stieltjes moment problem for $\beta_{M, M-1}(a, b).$ We also showed that the 
ratio $\beta_{M, M-1}(a, b)\beta^{-1}_{M, M-1}(a, \bar{b})$ with different values of $b_0$ possesses remarkable 
infinite factorizations and its Mellin transform reduces in a special case to intertwining products 
of ratios of multiple sine functions. For application, 
we noted that $\beta_{2, 1}(a, b)$ coincides with the distribution, which was first discovered in \cite{Kuz} and \cite{KuzPar}
and recently studied in \cite{LetSim} in the context of laws of certain functionals of stable processes, 
and the distribution $\beta_{1, 0}(a, b)$ appears as a building block of both Selberg and Morris 
probability distributions. 

We reviewed the general theory of Barnes beta distributions to make our paper self-contained and 
then focused on the structure of the Selberg and Morris integral probability distributions. The construction
of the latter is original to this paper. In both cases we emphasized the connection with Barnes beta 
distributions of types $(1,0)$ and $(2,2)$ and showed as a corollary %%of the general theory 
that 
the Stieltjes moment problem for the negative moments of the Morris integral distribution, unlike that
of the Selberg integral distribution, is determinate. We noted that the self-duality 
of the Mellin transform of the Selberg integral distribution, which was established originally in \cite{FLD} and \cite{FLDR},
follows from its involution invariance that we established in \cite{Me14}. We showed that the self-duality 
of the Mellin transform of the Morris integral distribution, which was established originally in \cite{FLDR} in the special case
of the Dyson integral distribution, extends to the general Morris integral distribution and also follows from its
involution invariance, which we proved in this paper. For application, we examined the conjectures of 
\cite{FyoBou}, \cite{FLD}, and \cite{FLDR} about the maximum distribution of the gaussian free field on the interval and circle. 
We reviewed their calculations from the viewpoint of the gaussian multiplicative chaos theory and 
conjecturally expressed the law of the derivative martingale in both cases in terms of Barnes 
beta distributions of types $(1,0)$ and $(2,2).$ In particular, in the interval case, our probabilistic
reformulation of their conjecture is new and, in the circle case, our conjecture itself
is new and extends the original conjecture of \cite{FyoBou} by adding a non-random logarithmic potential to the gaussian free field.

There are at least two additional areas of applications of Barnes beta distributions that 
are beyond the scope of this paper and left to future research. First, the Selberg integral 
distribution appears conjecturally as a mod-gaussian limit of the exponential functional of a large class 
of known statistics that converge to a gaussian process with $\mathcal{H}^{1/2}(\mathbb{R})$ limiting covariance, 
see \cite{Menon}. Hence, the considerations of this paper can be naturally carried over to the 
maximum distribution of all such statistics. %%The same type of result can also be formulated 
%%for statistics that converge to gaussian processes having $\mathcal{H}^{1/2}(\mathbb{T})$ covariance, in which case 
%%the maximum distribution is conjecturally governed by the critical Morris integral distribution. 
Second, we showed in \cite{Me14} that a limit of Barnes beta distributions of type $\beta_{2M, 3M}$ 
as $M\rightarrow \infty$ can be used to approximate the completed Riemann zeta function.
It remains to see if this approximation is useful in verifying the
conjectures of \cite{YK} about extremal values of the Riemann zeta function on the critical line, which are related
to the Selberg and Morris integral distributions, \emph{i.e.} %%  and, in particular,
if the aforementioned limit can be related to $\beta_{2,2}$ distributions. 

\section*{Acknowledgments}
The author wants to express gratitude to Yan Fyodorov and Nickolas Simm for
helpful correspondence relating to ref. \cite{FyodSimm}. The author is also
thankful to the referees for many helpful suggestions.
%%\end{acknowledgements}


\begin{thebibliography}{50}

%%\bibitem{A} T. Antal, M. Droz, G. ${\rm Gy\ddot{o}rgyi}$, Z. Racz:  $1/f$ noise and extreme value statistics, {\it
%%Phys. Rev. Lett.} {\bf 87}, 240601 (2001).

\bibitem{MRW} E. Bacry, J. Delour, J.-F. Muzy: 
Multifractal random walk. {\it Phys. Rev. E} {\bf 64}, 026103 (2001).

%\bibitem{BDM} E. Bacry, J. Delour, J.-F. Muzy (2001b), Modelling financial time series using multifractal random walks, {\it Physica A} {\bf 299}, 84-92.

\bibitem{BM1} E. Bacry, J.-F. Muzy: Log-infinitely divisible multifractal random walks. {\it Comm. Math. Phys.} {\bf 236}, 449--475 (2003).

\bibitem{Genesis} E. W. Barnes:  The genesis of the double gamma functions. \emph{Proc. London Math. Soc.} \textbf{s1-31}, 358--381 (1899).

\bibitem{mBarnes} E. W. Barnes: On the theory of the multiple gamma function. \emph{Trans. Camb. Philos. Soc.} \textbf{19}, 374--425 (1904).

\bibitem{BiaPitYor} P. Biane,  J. Pitman, M. Yor:  Probability laws
related to the Jacobi theta and Riemann zeta functions, and brownian
excursions. \emph{Bulletin of the American Mathematical Society}
\textbf{38}, 435--465 (2001).

\bibitem{BK} P. Bourgade and J. Kuan: Strong Szeg\H{o} asymptotics and zeros of the zeta function. {\it Comm. Pure Appl. Math} {\bf 67}, 1028--1044,
http://arxiv.org/abs/1203.5328 (corrected version) (2013).

\bibitem{CRS} X. Cao, A. Rosso, R. Santachiara: Extreme value statistics of 2D Gaussian free field: effect 
of finite domains. \emph{J. Phys. A: Math. Theor.} {\bf 49}, 02LT02 (2016). 

\bibitem{CLD} D.  Carpentier  and  P.  Le  Doussal: Glass  transition  of  a  particle  in  a  random  potential,  front
selection  in  nonlinear  renormalization  group,  and  entropic  phenomena  in  Liouville  and  sinh-Gordon models. 
{\it Phys. Rev. E} {\bf 63}, 026110 (2001).

\bibitem{CMW} C. Chamon, C, Mudry, X.-G. Wen: Localization in two dimensions, Gaussian field theories, and multifractality. {\it Phys. Rev. Lett.} {\bf 77}, 4194 (1996).

\bibitem{Ding} J. Ding, R. Roy, O. Zeitouni: Convergence of the centered maximum of log-correlated Gaussian fields, http://arxiv.org/abs/1503.04588 (2015).

\bibitem{Duf10} D. Dufresne:  $G$ distributions and the beta-gamma algebra. \emph{Elect. J. Prob.} \textbf{15}, 2163--2199 (2010).

\bibitem{DS} B. Duplantier, S. Sheffield: Liouville quantum gravity and KPZ. {\it Invent. Math. } {\bf 185}, 333-393 (2011).

\bibitem{ForresterBook} P. J. Forrester:  Log-Gases and Random Matrices. \emph{Princeton University Press}, Princeton (2010). 

\bibitem{FyoBou} Y. V. Fyodorov and J. P. Bouchaud: Freezing and extreme-value statistics in a random energy model with logarithmically correlated potential. \emph{J. Phys. A, Math Theor.} \textbf{41}, 372001 (2008).

\bibitem{YO} Y. V. Fyodorov, O. Giraud: High values of disorder-generated multifractals and logarithmically correlated processes. {\it Chaos, Solitons \& Fractals}, {\bf 74}, 15--26 (2015).

\bibitem{YK} Y. V. Fyodorov and J. P. Keating: Freezing transitions and extreme values: random matrix theory, $\zeta(1/2+it),$ and disordered landscapes. \emph{ Philos. Trans. R. Soc. Lond. Ser. A Math. Phys. Eng. Sci.} \textbf{372}, 20120503 (2014). 

\bibitem{FKS} Y. V. Fyodorov, B. A. Khoruzhenko, and N. J. Simm: Fractional Brownian motion with Hurst index $H=0$
and the Gaussian Unitary Ensemble, http://arxiv.org/abs/1312.0212 (2015).

\bibitem{FLD} Y. V. Fyodorov, P. Le Doussal: Moments of the position of the maximum for GUE
characteristic polynomials and for log-correlated Gaussian processes. \emph{ J. Stat. Phys.} DOI 10.1007/s10955-016-1536-6, 1--51 (2016).

\bibitem{FLDR} Y. V. Fyodorov, P. Le Doussal, A. Rosso: Statistical mechanics of logarithmic REM: duality, freezing and extreme value statistics of $1/f$ noises generated by gaussian free fields. \emph{ J. Stat. Mech. Theory Exp.}, P10005 (2009).

\bibitem{FLDR2} Y. V. Fyodorov, P. Le Doussal, A. Rosso: Counting function fluctuations and extreme value threshold in multifractal patterns: the case study of an ideal $1/f$ noise. \emph{ J. Stat. Phys.} {\bf 149}, 898--920 (2012).

\bibitem{FyodSimm} Y. V. Fyodorov, N. J. Simm: On the distribution of maximum value of the characteristic
polynomial of GUE random matrices, http://arxiv.org/abs/1503.07110 (2015).

\bibitem{Hughesetal} C. P. Hughes, J. P. Keating, and N. O'Connell: On the characteristic polynomial of a random unitary matrix. {\it Commun. Math. Physics}, {\bf 220}, 429--451 (2001).

\bibitem{Jacod} J. Jacod, E. Kowalski, A. Nikeghbali: 
Mod-gaussian convergence: new limit theorems in probability and
number theory. \emph{Forum Math.} \textbf{23}, 835--873 (2011).

\bibitem{K2} J.-P. Kahane: Positive martingales and random measures. {\it  Chinese Ann. Math. Ser. B} {\bf 8}, 1--12 (1987).

\bibitem{KurKoya} N. Kurokawa, S. Koyama: Multiple sine functions. {\it Forum Math.} {\bf 15}, 839--876 (2003).

\bibitem{Kuz} A. Kuznetsov: On extrema of stable processes. \emph{Ann. Probab.} \textbf{39}, 1027--1060 (2011).

\bibitem{KuzPar} A. Kuznetsov, J. C. Pardo: Fluctuations of
stable processes and exponential functionals of hypergeometric
L\'evy processes. \emph{Acta Appl. Math.} {\bf 123}, 113--139 (2013).

\bibitem{LetSim} J. Letemplier, T. Simon: On the law of homogeneous stable functionals, http://arxiv.org/abs/1510.07441 (2015).

\bibitem{Stoyanov} G. D. Lin, J. Stoyanov: Moment determinacy of powers and products of nonnegative random variables. 
\emph{J. Theor. Probab.} {\bf 28}, 1337--1353 (2015).

\bibitem{Madmax} T. Madaule: Maximum of a log-correlated Gaussian field, http://arxiv.org/abs/1307.1365 (2014).

\bibitem{Mad} T.  Madaule,  R.  Rhodes,  V.  Vargas:  Glassy  phase  and  freezing  of  log-correlated  Gaussian
potentials. {\it Ann. Appl. Probab.} {\bf 26}, 643-690 (2016). 
%%http://arxiv.org/abs/1310.5574 (2013).

\bibitem{secondface} B. B. Mandelbrot: Possible refinement of the log-normal hypothesis concerning the distribution of energy dissipation in intermittent turbulence, in {\it Statistical Models and Turbulence}, M. Rosenblatt and C. Van Atta, eds., Lecture Notes in Physics {\bf 12},
Springer, New York, p. 333 (1972).

\bibitem{Lan} B. B. Mandelbrot: Limit lognormal multifractal measures, in {\it Frontiers of Physics: Landau Memorial Conference,} 
E. A. Gotsman {\it et al}, eds., Pergamon, New York, p. 309 (1990).

\bibitem{BM} J.-F. Muzy, E. Bacry: Multifractal stationary random measures and multifractal
random walks with log-infinitely divisible scaling laws. {\it Phys. Rev. E} {\bf 66}, 056121 (2002).

\bibitem{NikYor} A. Nikeghbali and M. Yor: The Barnes G function
and its relations with sums and products of generalized gamma
convolutions variables. \emph{Elect. Comm. in Prob.} \textbf{14}, 396--411 (2009).

\bibitem{Me4} D. Ostrovsky: Mellin transform of the limit
lognormal distribution. \emph{Comm. Math. Phys.} \textbf{288}, 287--310 (2009).

\bibitem{MeIMRN} D. Ostrovsky: Selberg integral as a meromorphic
function. \emph{Int. Math. Res. Not. IMRN}, \textbf{17}, 3988--4028 (2013).

\bibitem{Me13} D. Ostrovsky:  Theory of Barnes beta distributions. \emph{Elect. Comm. in Prob.} \textbf{18}, no. 59,  1--16 (2013).

\bibitem{Me14} D. Ostrovsky: On Barnes beta distributions, Selberg integral and Riemann xi. \emph{Forum Math.}, DOI: 10.1515/forum-2013-0149 (2014).

\bibitem{Menon} D. Ostrovsky: On Riemann zeroes, lognormal multiplicative chaos, and Selberg integral. \emph{Nonlinearity} {\bf 29},
426--464 (2016).

\bibitem{RajRos} B. S. Rajput, J. Rosinski: Spectral representations of infinitely divisible processes. {\it Probab. Theory Relat. Fields} {\bf 82},  451--487 (1989).

\bibitem{RV} R.  Rhodes  and  V.  Vargas: Gaussian  multiplicative chaos  and  applications:  an  overview. {\it Probability Surveys} {\bf 11}, 315--392 (2014).

\bibitem{Rodg} B. Rodgers: A central limit theorem for the zeroes of the zeta function. {\it Int. J. Number Theory}, {\bf 10}, 483--511 (2014).

\bibitem{Ruij} S. N. M. Ruijsenaars: On Barnes' multiple zeta and gamma functions. \emph{Adv. Math.} \textbf{156}, 107--132 (2000).

\bibitem{Shintani} T. Shintani: A proof of the classical Kronecker limit formula. \emph{Tokyo J. Math.} \textbf{3}, 191--199 (1980).

\bibitem{Sosh} A. Soshnikov: The central  limit  theorem  for  local  linear  statistics  in  classical
compact  groups  and  related  combinatorial  identities. \emph{Ann. Prob.},  \textbf{28}, 1353--1370 (2000). 

\bibitem{SteVHar} F. W. Steutel and K. van Harn:  Infinite Divisibility of Probability Distributions on the Real
Line. \emph{Marcel Dekker}, New York (2004).

\bibitem{SubZei} E. Subag, O. Zeitouni: Freezing and Decorated Poisson Point Processes.
\emph{Commun. Math. Phys.} {\bf 337}, 55--92 (2015). 

\bibitem{Webb} C. Webb: The characteristic polynomial of a random unitary matrix and Gaussian multiplicative chaos - The $L^2$
-phase. \emph{Elect. J. Prob.} \textbf{20}, no. 104,  1--21 (2015).





\end{thebibliography}
\end{document}